\algnewcommand\algorithmicinput{\textbf{Input:}}
\algnewcommand\INPUT{\item[\algorithmicinput]}
\newcommand{\R}{\mathbb{R}}
\newcommand{\X}{\mathcal{X}}
\renewcommand{\S}{\mathcal{S}}
\newcommand{\Y}{\mathcal{Y}}
\newcommand{\W}{\mathcal{W}}
\newcommand{\F}{\mathcal{F}}
\newcommand{\T}{\mathcal{T}}
\newcommand{\D}{\mathcal{D}}
\newcommand{\Z}{\mathcal{Z}}
\newcommand{\LC}{\mathcal{L}}
\newcommand{\PP}{\mathcal{P}}
\newcommand{\sgn}{\textnormal{sgn}}
\newcommand{\<}{\langle}
\renewcommand{\>}{\rangle}
\newcommand{\nsum}{\textnormal{sum}}
\newcommand{\supp}{\textnormal{supp}}
\newcommand{\Rc}{\mathcal{R}}
\newcommand{\rank}{\textnormal{rank}}
\newcommand{\vertiii}[1]{{\left\vert\kern-0.25ex\left\vert\kern-0.25ex\left\vert #1 
    \right\vert\kern-0.25ex\right\vert\kern-0.25ex\right\vert}}
\newcommand{\lambdaX}{\lambda_{x}} 
\newcommand{\lambdaS}{\lambda_{s}}
\newcommand{\coherence}{\mu}
\newcommand{\Q}{\mathcal{Q}}
\newtheorem{theorem}{Theorem}
\newtheorem{lemma}[theorem]{Lemma}
\newtheorem{corollary}[theorem]{Corollary}
\newtheorem{proposition}[theorem]{Proposition}
\newcommand{\defeq}{\stackrel{\text{\tiny def}}{=}}
\DeclareMathOperator*{\argmin}{arg\,min}
\begin{document}

\title{Tensor Robust Principal Component Analysis: Better recovery with atomic norm regularization}

\author{%
  Derek Driggs\footnote{Applied Mathematics and Theoretical Physics, University of Cambridge (\texttt{d.driggs@cam.ac.uk})}%
  \and Stephen Becker\footnote{Applied Mathematics, University of Colorado Boulder (\texttt{stephen.becker@colorado.edu})}%
  \and Jordan Boyd-Graber\footnote{Computer Science, iSchool, lsc, and umiacs, University of Maryland College Park (\texttt{jbg@umiacs.umd.edu})}%
  }
\date{}

\maketitle

\begin{abstract}
This paper studies tensor-based Robust Principal Component Analysis (RPCA) using atomic-norm regularization. Given the superposition of a sparse and a low-rank tensor, we present conditions under which it is possible to exactly recover the sparse and low-rank components. Our results improve on existing performance guarantees for tensor-RPCA, including those for matrix RPCA. Our guarantees also show that atomic-norm regularization provides better recovery for tensor-structured data sets than other approaches based on matricization.

In addition to these performance guarantees, we study a nonconvex formulation of the tensor atomic-norm and identify a class of local minima of this nonconvex program that are globally optimal. We demonstrate the strong performance of our approach in numerical experiments, where we show that our nonconvex model reliably recovers tensors with ranks larger than all of their side lengths, significantly outperforming other algorithms that require matricization.
\end{abstract}

\vspace{3mm}

\noindent \textbf{Keywords:} Tensor completion, Matrix completion, Nonconvex optimization, Tensor rank, Principal component analysis

\vspace{3mm}

\noindent \textbf{AMS Subject Classification:} Primary 90C25; Secondary 15A69, 15A83.

\section{Introduction}

Tensors, or multidimensional arrays, are becoming increasingly prominent in data analysis and machine learning. Tensors were first used as tools for data analysis in the psychometics community, where researchers used tensor decompositions to study fMRI data sets that are more naturally represented as tensors than matrices \cite{KoldaRev,Psycho}. Since then, tensors have established a place in chemometrics, computer vision, compressed sensing, data mining, and higher-order statistics \cite{SidiReview2017}. Tensors' ability to naturally represent the distributions of latent variable models has also made them important tools for learning a variety of latent variable models, including topic models, Gaussian mixture models, and neural networks, to name a few \cite{AnandHsu,NNLift}. With modern data sets growing quickly in both size and complexity, tensor-based algorithms offer more natural approaches for analyzing multidimensional data.

Many tensor-based algorithms for data analysis are formulated as low-rank recovery problems. Low-rank tensor decompositions have been used for video processing \cite{AnandRPCA,SobralTensor}, topic modeling \cite{Anand1}, blind source separation \cite{ComonBSS}, and parameter estimation in latent variable models \cite{AnandHsu}. Low-rank matrix recovery problems have been studied extensively, and Robust Principal Component Analysis (RPCA) is one approach to this problem \cite{RPCA}. RPCA decomposes the superposition of a low-rank and a sparse matrix into their original components by solving a convex optimization program, thereby efficiently recovering a low-rank matrix from grossly corrupted measurements.

Recently, several authors have proposed tensor completion algorithms for low-rank recovery \cite{BarakMoitra,GoldfarbTensors,SobralTensor,YuanZhang,MomentTensorAtom,ExactTensorSumSquares}, and only a few of these techniques have been extended to RPCA \cite{AnandRPCA,SNN,TangTensorRPCA,TubalRPCA}. Many of these algorithms do not work with tensors directly, but instead work with matrix representations of the higher-order data. Recent work has shown that representing tensors as matrices leads to sub-optimal performance \cite{YuanZhang}, but there are few methods for low-rank tensor recovery that are tractable, have performance guarantees, and preserve the tensor's higher-order structure. 

In this paper, we study the following model for tensor RPCA:
\begin{equation}
\min_{\X,\S} \quad \| \X \|_* + \lambda \| \S \|_{\textnormal{sum}} \quad \textnormal{subject to:} \quad \X + \S = \Z
\end{equation}
Our model uses the tensor atomic-norm: a higher-order generalization of the matrix nuclear-norm (see Section \ref{sec:Prelim1} for further details). We present the first performance guarantees for tensor RPCA using the tensor atomic norm. Our results improve on all existing recovery guarantees for tensor RPCA, corroborating recent literature suggesting that preserving the structure of multidimensional data sets allows for significantly improved recovery. The order-two case of our bounds also offer slightly improved recovery guarantees for matrix RPCA. We also discuss a nonconvex algorithm for solving our tensor RPCA program, and show that although our problem is nonconvex, all local minima are globally optimal. Our experiments show that our model can consistently recover tensors with full Tucker-rank (but not full CP-rank). This suggests that although our performance guarantees are sharp using the Tucker-rank as a metric, they are not yet sharp in terms of the CP-rank.

The rest of this paper is outlined as follows. In Section \ref{sec:Prelim1}, we provide an overview of some analytic and algebraic properties of tensors. In Section \ref{sec:MainRes}, we present recovery guarantees for our formulation of tensor RPCA using atomic-norm regularization. We also compare these guarantees to existing results for tensor RPCA. Section \ref{sec:lem:dualCert} contains the proof of these recovery guarantees.
While the tensor atomic norm is convex, it is in general computationally intractable, so in Section \ref{sec:NonCon}, we discuss a nonconvex representation of the tensor atomic norm that can be seen as a higher-order generalization of the Burer-Monteiro factorization approach that is popular in low-rank matrix recovery algorithms. This formulation has been used previously for tensor completion, and we show how it can be extended to tractably find stationary points of the tensor RPCA program. We also show that all local minima of our nonconvex program are globally optimal. We discuss how tensor RPCA cam be used for training latent variable models in Section \ref{sec:topmod}, using topic modeling as a motivating example. Finally, we present numerical experiments in Section \ref{numerics} that demonstrate the efficacy of our approach.

\section{Tensor Preliminaries}
\label{sec:Prelim1}

This section introduces definitions and properties of tensors. A more complete review of this information can be found in \cite{Friedland}, \cite{KoldaRev}, and \cite{YuanZhang}. We focus on order-three tensors to simplify notation, but our results can easily be extended to arbitrary orders.

Let $\X \in \R^{d_1 \times d_2 \times d_3}$ be an order-three tensor with side lengths $d_1,d_2,d_3$. The \textit{fibers} of $\X$ \textit{along its} $k^{th}$ \textit{mode} are the vectors obtained by holding all but one of the indices of $\X$ fixed and varying the $k^{th}$ index. 
In some cases, it is useful to \textit{matricize} a tensor, so that it is represented as a matrix. The matricization of an order-three tensor $\X$ along its $k^{th}$ mode is denoted $\X_{(k)}$ and is formed by taking the mode-$k$ fibers of $\X$ and making them the columns of $\X_{(k)}$.

The tensor product, which we denote $\otimes$, is defined so that if $\X = u \otimes v \otimes w$, then $\X_{i_1,i_2,i_3} = u_{i_1} v_{i_2} w_{i_3}$. The tensor product generalizes the outer product, so $u \otimes v = uv^T$.
Every tensor $\X$ admits a \textit{CP-decomposition} (CPD) of the form
\begin{align}
\label{cp}
\X &= \sum_{r=1}^R (a_r \otimes b_r \otimes c_r) = \sum_{r=1}^R \gamma_r (u_r \otimes v_r \otimes w_r),
\end{align}
where the vectors $u_r,v_r$, and $w_r$ have unit-norm and $\gamma_r = \|a_r\| \|b_r\| \|c_r\|$. When $R$ is minimal, we call $R$ the \textit{CP-rank} of $\X$. The matrices $A,B,C$ that have $a_r, b_r, c_r$ as their columns, respectively, are \textit{factor matrices} of $\X$. It is also sometimes convenient to use Kruskal's notation $\X = \llbracket A,B,C\rrbracket=\llbracket \gamma;U,V,W\rrbracket$ to denote the decomposition in \eqref{cp}.

Matrices can act on a tensor through multiplication. For a tensor $\X = \llbracket A,B,C\rrbracket \in \R^{d_1 \times d_2 \times d_3}$, multiplication by the matrices $M_i \in \R^{k_i \times d_i}, \ i = 1,2,3$ is defined as follows:
\begin{equation}
(M_1,M_2,M_3) \cdot \X = \llbracket M_{1} A, M_{2} B, M_{3} C \rrbracket \in \R^{k_1 \times k_2 \times k_3},
\end{equation}
and multiplication between the factor matrices is canonical matrix multiplication. We choose to use the notation of \cite{LimRank}, but this is multiplication is the same as $\X \times_1 M_1 \times_2 M_2 \times_3 M_3$ using the notation of \cite{KoldaRev}.

We also use the \textit{Khatri-Rao} product, which we denote $\odot$. For two matrices $A \in \R^{m \times n}. \ B \in \R^{p \times n}$ with the same number of columns, we have
\begin{equation}
\label{eq:khatrirao}
A \odot B \defeq \left( \begin{array}{c c c}
a_{1,1} \mathbf{b_1} & \cdots & a_{1,n} \mathbf{b_n} \\
\vdots & \ddots & \vdots \\
a_{m,1} \mathbf{b_1} & \cdots & a_{m,n} \mathbf{b_n}
\end{array} \right) \in \R^{m \cdot p \times n},
\end{equation}
where $\mathbf{b_i}$ is the $i^{th}$ column of $B$. A matricized tensor can be expressed neatly using the Khatri-Rao product \cite{KoldaRev}.

The \textit{Tucker-rank} of $\X$ is the tuple $\left(\rank(\X_{(1)}), \rank(\X_{(2)}), \rank(\X_{(3)})\right)$. We can bound the CP-rank of a third-order tensor using a weighted average of the components of its Tucker-rank:
\begin{equation}
\overline{r}(\X) \defeq \sqrt{\frac{r_1 r_2 d_3 + r_1 r_3 d_2 + r_2 r_3 d_1}{d_1 + d_2 + d_3}},\quad \text{where}\;r_i \defeq \rank(\X_{(i)}).
\end{equation}
If $d_1=d_2=d_3=d$ then $\overline{r}\le d$.
It has been established that the CP-rank $R \in [\overline{r},\overline{r}^2]$ \cite{YuanZhang}.

\subsection{Coherence}\label{sec:coherence}

Exact recovery of a tensor through our RPCA model relies on the tensor having low coherence. We adopt the measures of tensor coherence introduced in \cite{YuanZhang}. Recall that the coherence of an $r$-dimensional linear subspace $\text{span}(U)$ of $\mathbb{R}^k$ is defined to be \cite{CandesRecht2009,YuanZhang}
\begin{equation*}
\mu(U) \defeq \frac{k}{r} \max_{1 \le i \le k} \|\PP_U e_i\|^2 = \frac{\max_{1 \le i \le k} \|\PP_U e_i \|^2}{k^{-1} \sum_{i=1}^k \|\PP_U e_i\|^2},
\end{equation*}
where $\PP_U$ is the projection onto $\text{span}(U)$. For a tensor $\X = \llbracket A,B,C\rrbracket \in \mathbb{R}^{d_1 \times d_2 \times d_3}$, we define one measure of coherence as
\begin{equation} \label{eq:coherence}
\coherence(\X) \defeq \max\{ \mu(A), \mu(B), \mu(C) \}.
\end{equation}
We can also interpret $\coherence(\X)$ as the maximum coherence of the column spaces of each matricization of $\X$. Another measure of coherence, for $\X=\llbracket A,B,C \rrbracket$, is
\begin{equation}
\label{eq:alphaDef}
\alpha(\X) \defeq \sqrt{d_1d_2d_3/\overline{r}(\X)} \|\W\|_{\max},
\end{equation}
where $\W = \llbracket W_1,W_2,W_3\rrbracket$ satisfies $\W = \llbracket \PP_A W_1, \PP_B W_2, \PP_C W_3\rrbracket$, $\|\W\|=1$,  $\<\X,\W\> = \|\X\|_*$, and
$\|\W\|_{\max}$ is the largest entry of $\W$ in absolute value (cf.~\eqref{eq:dualTensor}).

We assume that the low-rank component $\X \in \mathbb{R}^{d_1 \times d_2 \times d_3}$ has low coherence, so it satisfies $\coherence(\X) \le \coherence_0$ and $\alpha(\X) \le \alpha_0$. These coherence bounds are not especially more restrictive than the coherence bounds required for low-rank matrix recovery. A bounded coherence ensures that the low-rank component of $\X$ is not sparse, so it can be separated from the sparse component. $\coherence_0$ bounds the coherence of each matricization of $\X$, and $\alpha_0$ provides a uniform bound on these coherences similar to the uniform bound used in \cite{SNN}.

    \subsection{Projection Operators}
    
    In the proof of our main result, we use projection operators that act on order-three tensors. Let $U,V,W$ be matrices. With $\X = \llbracket A,B,C\rrbracket$, define the projection operator $\PP_{U,V,W} : \X \mapsto \llbracket \PP_U(A),\PP_V(B),\PP_W(C)\rrbracket$, where $\PP_U$, for example, projects matrices onto the column space of $U$,
    and $\PP_{U^\perp}$ projects onto the orthogonal complement of the column space of $U$. For convenience, we adopt the notation of \cite{YuanZhang} to define the following projections:
    \begin{align*}
    \PP_{\X}^0 &\defeq \PP_{U,V,W} \\
    \PP_{\X} &\defeq \PP_{U,V,W} + \PP_{U^{\perp},V,W} + \PP_{U,V^{\perp},W} + \PP_{U,V,W^{\perp}}\\
    \PP_{\X_1} &\defeq \PP_{U^{\perp}, V^{\perp}, W} \\
    \PP_{\X_2} &\defeq \PP_{U^{\perp}, V, W^{\perp}} \\
    \PP_{\X_3} &\defeq \PP_{U, V^{\perp}, W^{\perp}} \\
    \PP_{\X_4} &\defeq \PP_{U^{\perp}, V^{\perp}, W^{\perp}} \\
    \PP_{\X^{\perp}} &\defeq \PP_{\X_1} + \PP_{\X_2} + \PP_{\X_3} + \PP_{\X_4} = I-\PP_{\X}. \\
    \end{align*}
These are orthogonal projections, i.e., if $\PP$ is any of the above, then $\PP^2=\PP$ and $\PP=\PP^\top$.

Let $\S$ be a sparse tensor. The \textit{support} of $\S$, which we denote $\Omega$, is the set of indices corresponding to the nonzero entries of $\S$. We use $\PP_{\Omega}$ to denote the projection onto the support of $\S$, i.e.,
\begin{equation}
\left(\PP_{\Omega}(\X) \right)_{i_1,i_2,i_3} = \begin{cases}
\X_{i_1,i_2,i_3} & (i_1,i_2,i_3) \in \Omega, \\
0 & (i_1,i_2,i_3) \not \in \Omega.
\end{cases}
\end{equation}

    \subsection{Norms for Tensors and Operators on Tensors}\label{sec:norms}

    In the proof of our main result, we analyze linear operators acting on order-three tensors and their operator norms, which we define with respect to the usual Euclidean inner product. Let
    $$ \<\X,\Y \> \defeq \sum_{(i,j,k) \in [d_1] \times [d_2] \times [d_3]} 
    \X_{i,j,k} \Y_{i,j,k}.
    $$
    The Frobenius (or Hilbert-Schmidt) norm is the induced norm
    $ \|\X\|_F^2 = \<\X,\X\>$. 
    Let $\mathcal{Q} : \mathbb{R}^{d_1 \times d_2 \times d_3} \to \mathbb{R}^{d'_1 \times d'_2 \times d'_3}$ be a
    linear operator.
    We define the norm of $\mathcal{Q}$ as the operator norm 
    \begin{equation}
    \label{opNorm}
    \vertiii{\mathcal{Q}} \defeq \sup_{\substack{\X \in \mathbb{R}^{d_1 \times d_2 \times d_3} \\ \|\X\|_F \le 1 } } \|\mathcal{Q}(\X)\|_F.
    \end{equation}

    Finally, we need higher-dimensional generalizations of the matrix $\ell_1$ and $\ell_{\infty}$ norms.
    Write $[d_1]$ as shorthand for the list $(1,2,\ldots,d_1)$. Then for a tensor $\X \in \R^{d_1 \times d_2 \times d_3}$, we can define
    \begin{equation} \label{eq:sumNorm}
    \|\X\|_{\textnormal{sum}} \defeq \sum_{(i,j,k) \in [d_1] \times [d_2] \times [d_3]} |\X_{i,j,k}|,
    \end{equation}
    and
    \begin{equation}\label{eq:maxNorm}
    \|\X\|_{\max} \defeq \max_{(i,j,k) \in [d_1] \times [d_2] \times [d_3]} |\X_{i,j,k}|.
    \end{equation}

The \emph{subdifferential} of a function $f$ at a point $x$ is the set $\partial f (x) \defeq \{ d \mid f(y) \ge f(x) + \<d,y-x\> \,\forall y \in \text{dom}(f)\}$. The subdifferential of $\X\mapsto \|\X\|_{\textnormal{sum}}$ reduces to
$$
\partial \|\X\|_{\textnormal{sum}} = \{ \sgn(\X) + \F \mid \F = \PP_{\Omega^\perp}\F, \,  \|\F\|_{\max}\le 1 \} \quad\left(\Omega = \supp(\X) \right)
$$
where $\sgn(\X)$ computes the sign of $\X$ element-wise, with $\sgn(0)=0$. Note $\F = \PP_{\Omega^\perp}\F \iff \PP_\Omega\F=0$.

The tensor \textit{atomic norm} (also known as the tensor \textit{nuclear norm}), which we denote as $\| \cdot \|_*$, is defined as follows:
\[ \| \X \|_* = \min \left\{ \sum_{r=1}^R | \gamma_r | : \X = \sum_{r=1}^R \gamma_r (u_r \otimes v_r \otimes w_r), \ \| u_r \| = \| v_r \| = \| w_r \| = 1 \right\} \]
In the matrix case, the atomic norm is the matrix nuclear norm, which is equal to the sum of the singular values of a matrix. 
Unlike for matrices, the decomposition that realizes the minimum above
is not necessarily the minimal-rank CPD of $\X$. We call a decomposition that does achieve the minimum an \textit{atomic decomposition} of $\X$, and such a decomposition always exists \cite{Friedland}.
The corresponding number of terms $R$ in an atomic decomposition is \emph{atomic rank}, as we use in Section \ref{subsec:burmontfact}. 
The tensor atomic norm can be interpreted as the $\ell_1$-norm of the weights in its atomic decomposition. Roughly speaking, atomic-norm regularization encourages these weights to tend toward zero, promoting low-rank solutions just as $\ell_1$-norm regularization promotes sparsity.

    The dual to the atomic norm is the spectral norm, $\| \cdot \|$, which is defined as \cite{Friedland,YuanZhang}:
    \begin{equation}
    \label{eq:w}
    \| \X \| =
    \max_{ \| u \| = \| v \| = \| w \| = 1 } \< \X, u \otimes v \otimes w \>
    \end{equation}
      There always exists a unit-Euclidean-norm tensor that maximizes this inner-product, so the maximum is well-defined \cite{Friedland}. Furthermore, there always exists a \textit{dual tensor}~\cite{YuanZhang} where
\begin{equation}
\label{eq:dualTensor}
\W\;\text{is dual to}\; \X \iff  \left(   \| \W \| = 1,\; \PP_{\X}^0 \W = \W\; \text{and}\; \langle \X, \W \rangle = \| \X \|_* \right)
\end{equation}
      The spectral and atomic norms satisfy $\| \X \| \le \| \X \|_F \le \| \X \|_*$. To see this, let $\X = \sum_{r=1}^R \gamma_r (u_r \otimes v_r \otimes w_r)$ be an atomic decomposition of $\X$. By the triangle inequality,
      \begin{equation}
      \label{eq:fronuc}
      \| \X \|_F \le \sum_{r=1}^R \gamma_r \|u_r \otimes v_r \otimes w_r\|_F = \sum_{r=1}^R \gamma_r = \| \X \|_*.
      \end{equation}
The inequality $\| \X \| \le \|\X\|_F$ follows from the fact that the spectral norm is dual to the atomic norm.
      
      The proof of our main result relies on the following partial characterization of the subdifferential of the tensor atomic norm, which can be found in \cite{YuanZhang}:
\begin{equation} \label{eq:subdiff}
\partial \| \X \|_* \supset \{ \W + \PP_{\X^{\perp}} \W^{\perp} : \| \W^{\perp} \| \le \tfrac{1}{2}, \ \W \textnormal{ is dual to } \X \}.
\end{equation}

\section{Main Result}
\label{sec:MainRes}

Given the sum of a low-rank tensor $\X$ and a sparse tensor $\S$, $\Z = \X + \S$, we would like to recover $\X$ and $\S$ by solving the following program:
\begin{equation}
\label{RPCA}
\min_{\X',\S'} \quad \| \X' \|_* + \lambda \| \S' \|_{\textnormal{sum}} \quad \textnormal{subject to:} \quad \X' + \S' = \Z
\end{equation}
Theorem \ref{main} provides conditions under which \eqref{RPCA} recovers $\X$ and $\S$ exactly with high probability.
\begin{theorem}
\label{main}
Suppose tensor $\X \in \R^{d_1 \times d_2 \times d_3}$ satisfies $\coherence(\X) \le \coherence_0$ and $\alpha(\X)$ $\le \alpha_0$. Let $\S \in \R^{d_1 \times d_2 \times d_3}$ have a support set $\Omega$ that is uniformly distributed among all sets of cardinality $m$, and let $n = d_1 d_2 d_3 - m$. There then exists a positive constant $c$ so that \eqref{RPCA} with $\lambda = (d_1+d_2+d_3)^{-1/2}$ exactly recovers $\X$ and $\S$ with probability $1-(d_1+d_2+d_3)^{-1-c}$, provided that
\begin{equation}
\label{eq:mainResult}
\overline{r}(\X) \le \rho_r \left( \frac{n}{(d_1+d_2+d_3) \log(n) \alpha_0^4 \coherence_0^2 } \right)^{1/2} \quad \textnormal{\textit{and}} \quad m \le \rho_s d_1 d_2 d_3.
\end{equation}
where $\rho_r, \rho_s$ are numerical constants.
\end{theorem}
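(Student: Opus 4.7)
The plan is to follow the dual certificate / KKT framework, adapted from matrix RPCA to the tensor atomic norm. By convexity of \eqref{RPCA}, it suffices to exhibit a tensor $\Lambda$ in the relative interior of both $\partial\|\X\|_*$ and $\lambda\,\partial\|\S\|_{\textnormal{sum}}$. Using \eqref{eq:subdiff} and the formula for $\partial\|\cdot\|_{\textnormal{sum}}$ together with the standard injectivity argument, $(\X,\S)$ will be the unique optimum as soon as one produces $\Lambda$ satisfying $\PP_\X\Lambda = \W$ (where $\W$ is the dual tensor of $\X$), $\|\PP_{\X^\perp}\Lambda\|<\tfrac12$, $\PP_\Omega\Lambda = \lambda\sgn(\S)$, and $\|\PP_{\Omega^\perp}\Lambda\|_{\max}<\lambda$, together with $\vertiii{\PP_\X\PP_\Omega\PP_\X}<1$ to rule out tangential ambiguity.

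First I would reduce to the Bernoulli model via the standard Cand\`es--Recht equivalence, so that each entry of $\S$ is nonzero independently with probability $\rho = m/(d_1d_2d_3)$. Then I would establish the key noncoherence estimate
\[
\vertiii{\,\PP_\X - (1-\rho)^{-1}\PP_\X\PP_{\Omega^\perp}\PP_\X\,} \le \tfrac12
\]
with high probability by writing the random operator as a sum of i.i.d.\ rank-one operators $\PP_\X(e_{i,j,k}\otimes e_{i,j,k})\PP_\X$ and applying an operator Bernstein inequality. The individual summand norms are controlled via the coherence parameter $\coherence_0$, and the bound on $\overline{r}(\X)$ in \eqref{eq:mainResult} ensures the right-hand side concentrates; the same estimate also supplies the injectivity needed for uniqueness.

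I would then construct $\Lambda$ using Gross's golfing scheme: partition $\Omega^\perp$ into independent subsamples $\Omega_1,\dots,\Omega_{j_0}$ with $j_0 = \Theta(\log n)$, set $\Y_0 = \W - \lambda\PP_\X\sgn(\S)$, iterate $\Y_j = \PP_\X(\Y_{j-1} - q^{-1}\PP_{\Omega_j}\Y_{j-1})$, and define $\Lambda = \lambda\sgn(\S) + \sum_{j=1}^{j_0}q^{-1}\PP_{\Omega_j}\Y_{j-1}$. The noncoherence estimate yields the geometric decay $\|\Y_j\|_F\le 2^{-j}\|\Y_0\|_F$, so $\PP_\X\Lambda = \W$ up to negligible error, while $\PP_\Omega\Lambda = \lambda\sgn(\S)$ by construction. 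The max-norm bound $\|\PP_{\Omega^\perp}\Lambda\|_{\max}<\lambda$ then follows from a scalar Bernstein inequality applied entrywise, with the coherence parameter $\alpha_0$ from \eqref{eq:alphaDef} controlling the variance.

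The main obstacle will be the spectral-norm bound $\|\PP_{\X^\perp}\Lambda\|<\tfrac12$, since the tensor spectral norm of \eqref{eq:w} is NP-hard and admits no SVD, so matrix Bernstein cannot be invoked directly. My plan is to bound each contribution $\|\PP_{\X^\perp}(q^{-1}\PP_{\Omega_j}-I)\Y_{j-1}\|$ via an $\epsilon$-net over unit rank-one tensors $u\otimes v\otimes w$: the covering number scales as $e^{C(d_1+d_2+d_3)}$, so a scalar Bernstein bound for each fixed triple followed by a union bound produces the required estimate at the cost of a $\sqrt{(d_1+d_2+d_3)\log n}$ factor, which is precisely the factor appearing in \eqref{eq:mainResult}. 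A final union bound over the $j_0$ golfing rounds, combined with the probability loss from the Bernoulli-to-uniform reduction, gives the stated failure probability $(d_1+d_2+d_3)^{-1-c}$.
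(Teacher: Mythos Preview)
Your overall architecture---dual certificate, Bernoulli reduction, golfing on $\Omega^\perp$, and an $\epsilon$-net for the tensor spectral norm---matches the paper's. Two differences and one genuine gap are worth flagging.

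\textbf{Structural difference.} The paper does not golf from $\Y_0=\W-\lambda\PP_\X\sgn(\S)$. Instead it splits the certificate as $\W^\perp=\W^\LC+\W^\S$: the piece $\W^\LC$ is built by golfing from $\W$ alone (conditions (a)--(c) in \eqref{condWL}), while $\W^\S$ is constructed separately as the least-squares solution $\lambda\PP_{\X^\perp}(\PP_\Omega-\PP_\Omega\PP_\X\PP_\Omega)^{-1}\sgn(\S)$ (conditions (d)--(e) in \eqref{condWS}). This decoupling keeps the golfing iterates controlled purely by $\|\W\|_{\max}\le\alpha_0\sqrt{\overline r/(d_1d_2d_3)}$, which is exactly what the Yuan--Zhang spectral estimate (Lemma~\ref{sparseBounds}) requires as input. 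Your combined iterate $\Y_0$ carries the additional term $\lambda\PP_\X\sgn(\S)$, whose max-norm is not obviously dominated by $\|\W\|_{\max}$, so the hypothesis of that lemma is not available to you without further work.

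\textbf{The gap.} Your proposal never invokes the random-sign model for $\S$, and this is where it fails to close. In both the paper's route and yours, one must control the tensor spectral norm of quantities built from $\sgn(\S)$. The paper handles $\|\W^\S\|$ by first passing (via a standard derandomization, Appendix~\ref{app:berunif} and Lemma~\ref{lem:deran}) to the model in which the nonzero entries of $\S$ carry i.i.d.\ symmetric signs, and then applying Hoeffding conditional on $\Omega$ together with the $\epsilon$-net (Lemma~\ref{lem:SubGaussBound} for $\lambda\|\sgn(\S)\|$, and the digitalized-vector covering for the Neumann remainder). Without random signs, $\sgn(\S)$ is an adversarial $\{0,\pm1\}$ tensor with $m=\rho_s d_1d_2d_3$ nonzeros, and nothing prevents $\lambda\|\PP_{\X^\perp}\sgn(\S)\|$ or $\|\lambda\PP_\X\sgn(\S)\|_{\max}$ from being too large for your spectral and max-norm steps. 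The Bernstein/$\epsilon$-net argument you sketch uses only the randomness of the golfing batches $\Omega_j$, which drives the contraction $\|\Y_j\|_F\le 2^{-j}\|\Y_0\|_F$ but says nothing about the size of the $\sgn(\S)$ contribution itself.

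A smaller point: the paper's spectral bound on the golfing part is not a bare $\epsilon$-net plus scalar Bernstein; it imports the more refined estimate of Yuan--Zhang (Lemma~\ref{sparseBounds}), which bounds $\|(I-\tfrac{d_1d_2d_3}{n_1}\PP_{\Omega_1})\T\|$ for $\T\in\mathrm{range}(\PP_\X)$ with $\|\T\|_{\max}\le\|\W\|_{\max}$. Your net argument would have to reproduce this, and it is not clear the naive version recovers the same dependence on $\overline r$ and $\alpha_0$ that appears in \eqref{eq:mainResult}.
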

Our guarantees improve on existing bounds for tensor RPCA, and they are also an improvement over performing matrix RPCA on the matricized tensor \cite{RPCA}. 

Furthermore, these bounds are near optimal in the following sense. For simplicity, consider these bounds in the regime $d_1 = d_2 = d_3 = d$, $r_1 = r_2 = r_3 = r$ and $m \ll d^3$. The rank-bound in equation \eqref{eq:mainResult} is then
\begin{equation}
\label{eq:ourbounds}
r \le \rho_r \left( \frac{d}{ \log(d)^{-1/2} \alpha_0^2 \coherence_0 } \right).
\end{equation}
The maximum possible Tucker-rank is $r_1 = r_2 = r_3 = d$, so our program allows for exact recovery of a tensor with full Tucker-rank to within a factor of $\log(d)^{-3/2}$. In fact, our numerical experiments in Section \ref{numerics} indicate that our nonconvex reformulation of \eqref{RPCA} can often recover tensors of full Tucker-rank (but low CP-rank). These guarantees, along with our numerical experiments, suggest that the Tucker-rank is often a poor measure of the complexity of tensors when compared to the CP-rank.

Outside of this work, the best guarantees (in many regimes) for a tensor RPCA are from \cite{SNN}. The relevant theorem from this paper is listed below for the case $K=3$ and $d_1=d_2=d_3=d$:
\begin{theorem}[Thm. 1 in \cite{SNN}]
\label{thm:SNN}
Consider the following sum-of-nuclear-norms model for tensor RPCA:
\begin{equation}
\label{eqSNN}
\min_{\X', S'} \quad d \sum_{i = 1}^3  \|\X'_{(i)}\|_* + \|\S'\|_1, \quad \textnormal{subject to:} \quad \X' + \S' = \Z,
\end{equation}
Let $\X$ have Tucker-rank $(r_1,r_2,r_3)$ and let $S$ have support set $\Omega$ that is uniformly distributed among all sets of cardinality $m$. Then there exists a constant $C$ such that
\eqref{eqSNN} exactly recovers $\X$ and $\S$ with probability 
$1-C d^{-3}$
as long as
\begin{equation}
\label{eq:SNNbounds}
r_k \le C_r  \frac{\iota_0^{-1} d}{\log^2d}, \quad \textnormal{\textit{and}} \quad m \le \rho d^3, \quad(\forall\; k=1,2,3),
\end{equation}
for some constants $C_r$ and $\rho$, and incoherence parameter $\iota_0$.
\end{theorem}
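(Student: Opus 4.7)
The plan is to prove Theorem \ref{thm:SNN} via the standard dual-certificate strategy for convex low-rank-plus-sparse recovery, adapted to handle the three mode-wise matricizations at once. First I would write the KKT conditions for \eqref{eqSNN}: a feasible pair $(\X,\S)$ is optimal when there is a single Lagrange multiplier $\Lambda\in\R^{d\times d\times d}$ (dual to the constraint $\X+\S=\Z$) such that $\Lambda_{(i)}\in d\cdot\partial\|\X_{(i)}\|_*$ for every $i\in\{1,2,3\}$ \emph{and} simultaneously $\Lambda\in\partial\|\S\|_1$. Unpacking the matrix nuclear-norm subdifferential gives $\Lambda_{(i)}=d(U_iV_i^\top + W_i)$, where $U_i\Sigma_iV_i^\top$ is the thin SVD of $\X_{(i)}$, $\PP_{T_i^\perp}W_i=W_i$ for the tangent space $T_i$ at $\X_{(i)}$, and $\|W_i\|\le 1$; and the $\ell_1$ subdifferential gives $\Lambda=\sgn(\S)+\F$ with $\PP_\Omega\F=0$, $\|\F\|_{\max}\le 1$. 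Strengthening both inequalities to strict gives uniqueness of the recovered pair via the usual argument that the tangent-plus-support cone intersects the feasible set only at $(\X,\S)$.

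Next I would build $\Lambda$ by a golfing scheme. After replacing the uniform support $\Omega$ of cardinality $m$ by a Bernoulli model with parameter $q\asymp m/d^3$ (standard equivalence, absorbed into the constant $\rho$), split $\Omega^c$ into $k_0=\Theta(\log d)$ independent copies $\Omega_j^c$. For each mode $i$, iterate $Y_j^{(i)}=q_j^{-1}\PP_{\Omega_j^c}\PP_{T_i}\bigl(d\,U_iV_i^\top - \sum_{\ell<j}Y_\ell^{(i)}\bigr)$ and set $\Lambda^{(i)}=\sum_j Y_j^{(i)}$; then take $\Lambda=\sgn(\S)+\tfrac{1}{3}\sum_i\Lambda^{(i)}$, folded back to a tensor. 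By construction, each $\Lambda^{(i)}$ is supported on $\Omega^c$, so $\PP_\Omega\Lambda=\sgn(\S)$ automatically; the tangent-space residuals $\PP_{T_i}\Lambda_{(i)}-d\,U_iV_i^\top$ contract geometrically along $j$, so after $k_0=\Theta(\log d)$ steps they are exponentially small. The weight $d$ in front of each nuclear norm in \eqref{eqSNN} is precisely what makes the dual-norm contribution of $\sgn(\S_{(i)})$ compatible with the spectral bound $\|W_i\|<1$ required on each mode.

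The analytic heart is then two matrix-concentration estimates. First, the tangent-space contraction $\|\PP_{T_i}\PP_\Omega\PP_{T_i}-q\PP_{T_i}\|$ is controlled by matrix Bernstein under the incoherence parameter $\iota_0$; combined with the $k_0=\Theta(\log d)$ golfing iterations this yields the rank condition $r_i\lesssim \iota_0^{-1}d/\log^2 d$, one $\log d$ from the number of iterations and one from the Bernstein tail. Second, the entrywise bound $\|\PP_{\Omega^c}\Lambda\|_{\max}<1$ uses a scalar Bernstein bound on each $Y_j^{(i)}$, again exploiting geometric contraction of the residual. A union bound over the three modes and the $d^3$ entries then collapses everything into the stated failure probability $Cd^{-3}$. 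The main obstacle, and what really distinguishes the proof from a textbook matrix-RPCA argument, is juggling the three tangent spaces $T_1,T_2,T_3$ simultaneously: a certificate well-behaved on mode $i$ is \emph{a priori} uncontrolled on modes $j\ne i$, so one has to design the three golfing contributions so that their cross-mode leakage does not blow up the spectral bound in each mode. Getting the stated $d/\log^2 d$ scaling (rather than $d^{1/2}/\log^{O(1)}d$) ultimately relies on the specific $d$-weighting in \eqref{eqSNN} and on the uniform rank bound $r_k\lesssim d/\log^2 d$ holding across all three modes.
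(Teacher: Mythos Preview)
The paper does not contain a proof of Theorem~\ref{thm:SNN}. This theorem is quoted verbatim from \cite{SNN} solely for the purpose of comparing their sum-of-nuclear-norms guarantees against the atomic-norm guarantees of Theorem~\ref{main}; the authors neither reprove it nor sketch an argument. So there is nothing in this paper to compare your proposal against.

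That said, your outline is broadly the right shape for how \cite{SNN} actually proceeds: dual certificate, Bernoulli-for-uniform swap, golfing on $\Omega^c$, matrix Bernstein for the tangent-space contraction, and a union bound over modes. One point worth flagging is your construction $\Lambda=\sgn(\S)+\tfrac{1}{3}\sum_i\Lambda^{(i)}$. As written this does not obviously give $\PP_{T_i}\Lambda_{(i)}=d\,U_iV_i^\top$ in each mode, because the contributions $\Lambda^{(j)}$ from $j\ne i$ and the $\sgn(\S)$ term have uncontrolled projections onto $T_i$. You acknowledge this cross-mode leakage as ``the main obstacle'' but do not say how it is handled; in the actual proof this is where most of the work sits, and averaging with weight $1/3$ is not by itself enough. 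If you want to reconstruct the argument, that is the step to think through carefully.
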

Comparing \eqref{eq:ourbounds} with \eqref{eq:SNNbounds}, we see that our bound is less restrictive on the rank of the tensor by a logarithmic factor. The incoherence parameter $\iota_0$ used in \cite{SNN} satisfies $\mu_0 \le \iota_0$, so this improvement is valid no matter the coherence of $\X$. More importantly, Theorem \ref{thm:SNN} bounds the maximum component of the Tucker-rank, while our result bounds a weighted average of all the components, so it is less restrictive in this sense as well.

While the guarantees of Theorem \ref{thm:SNN} are the best existing bounds in many regimes, there are other results that are stronger in special cases. \cite{AnandRPCA} develop a nonconvex approach to tensor RPCA with associated convergence and performance guarantees. Their results are stronger than those in Theorem \ref{thm:SNN} when $\S$ is block-sparse. Because our model assumes that the support of $\S$ is uniformly distributed, direct comparisons with \cite{AnandRPCA} are difficult. \cite{TubalRPCA} offer another set of guarantees for tensor RPCA based on the tubal rank of a tensor (see also \cite{NewTubalRPCA}). Because the results in \cite{TubalRPCA} set bounds on the tubal rank of the low-rank component, it is difficult to directly compare these bounds to others, but these results are similar to those in Theorem \ref{thm:SNN}. There are also some works that present non-convex algorithms for decomposing a tensor into low-rank and sparse components in the presence of noise \cite{cubicSketch,store}. While the recovery guarantees of these papers are similar to ours, they are not directly applicable to the noiseless regime.

    \section{Proof of Theorem \ref{main}}
    \label{sec:lem:dualCert}
    
    Our proof of Theorem \ref{main} is similar to the proof for the matrix case \cite{RPCA}. We use a partial characterization of the subdifferential of the tensor atomic norm to formulate a dual certificate that ensures exact recovery. We then prove that such a certificate exists with high probability provided that the rank of the low-rank component and the sparsity of the sparse component satisfy the bounds given in Theorem \ref{main}.
    
    Lemma \ref{lem:dualCert} establishes our dual certificate. It relies on the condition that $\vertiii{ \PP_{\Omega} \PP_{\X} } < \frac{1}{2}$; we discuss conditions under which $\vertiii{ \PP_{\Omega} \PP_{\X} }$ is bounded with high probability in Section \ref{sec: boundingTheOp}.

    \begin{lemma}
        \label{lem:dualCert}
        Suppose $\vertiii{ \PP_{\Omega} \PP_{\X} } < \frac{1}{2}$ and $\lambda \in (0, 1)$. Then $(\X,\S)$ is the unique  solution to \eqref{RPCA} if $\Z = \X + \S$ and 
        there exists tensors 
        $\W^{\perp}, \D$ and $\F$ satisfying
        \begin{equation}
        \label{eq:mainLemma}
        \W + \PP_{\X^{\perp}} \W^{\perp} = \lambda (\sgn(\S) + \F + \PP_{\Omega} \D)
        \end{equation}
        where $\| \W^{\perp} \| \le \frac{1}{4}$, $\PP_{\Omega} \F = 0$, $\| \F \|_{\max} \le \frac{1}{4}$, $\| \PP_{\Omega} \D \|_F \le \frac{1}{8}$,
        and $\Omega = \supp(\S)$. (As before and throughout this paper, $\W$ satisfies $\PP_{\X}^0 \W = \W$, $\|\W\| = 1$, and $\big< \W,\X \big> = \|\X\|_*$, i.e. $\W$ is dual to $\X$.)
    \end{lemma}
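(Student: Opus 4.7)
Following the strategy of the matrix RPCA dual-certificate argument and adapting it to the tensor norms defined above, I parametrize all feasible pairs as $(\X + H, \S - H)$ for arbitrary $H$, and show that the objective in \eqref{RPCA} strictly increases whenever $H \neq 0$. The first step is to apply the subgradient inequality twice, choosing the ``free'' components of each subgradient to extract the complementary pieces of $H$. For $\|\cdot\|_*$, the characterization \eqref{eq:subdiff} permits a subgradient of the form $\W + \PP_{\X^{\perp}}\Xi$ with $\|\Xi\| \le \tfrac{1}{2}$; atomic/spectral duality lets me pick $\Xi$ so that $\<\PP_{\X^{\perp}}\Xi, H\> = \tfrac{1}{2}\|\PP_{\X^{\perp}}(H)\|_*$. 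For $\|\cdot\|_{\textnormal{sum}}$, the subdifferential formula permits $\sgn(\S) + \F_0$ with $\PP_\Omega\F_0 = 0$ and $\|\F_0\|_{\max}\le 1$; choosing $\F_0 = -\sgn(\PP_{\Omega^{\perp}}(H))$ gives $\<-\F_0, H\> = \|\PP_{\Omega^{\perp}}(H)\|_{\textnormal{sum}}$. Summing the two subgradient inequalities yields
\begin{equation*}
\|\X+H\|_* + \lambda\|\S-H\|_{\textnormal{sum}} - \|\X\|_* - \lambda\|\S\|_{\textnormal{sum}} \ge \tfrac{1}{2}\|\PP_{\X^{\perp}}(H)\|_* + \lambda\|\PP_{\Omega^{\perp}}(H)\|_{\textnormal{sum}} + \<\W - \lambda\sgn(\S), H\>.
\end{equation*}

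Next, the dual-certificate identity rewrites $\W - \lambda\sgn(\S) = \lambda\F + \lambda\PP_\Omega\D - \PP_{\X^{\perp}}\W^{\perp}$. Using self-adjointness of the projections together with the atomic/spectral, sum/max, and Frobenius/Frobenius duality pairings produces $|\<\PP_{\X^{\perp}}\W^{\perp}, H\>| \le \tfrac{1}{4}\|\PP_{\X^{\perp}}(H)\|_*$, $|\<\F, H\>| \le \tfrac{1}{4}\|\PP_{\Omega^{\perp}}(H)\|_{\textnormal{sum}}$, and $|\<\PP_\Omega\D, H\>| \le \tfrac{1}{8}\|\PP_\Omega(H)\|_F$. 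After substitution the objective gap is at least $\tfrac{1}{4}\|\PP_{\X^{\perp}}(H)\|_* + \tfrac{3\lambda}{4}\|\PP_{\Omega^{\perp}}(H)\|_{\textnormal{sum}} - \tfrac{\lambda}{8}\|\PP_\Omega(H)\|_F$.

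The hard part is absorbing the Frobenius residual $\|\PP_\Omega(H)\|_F$ into the two positive terms, which is where the operator-norm hypothesis $\vertiii{\PP_\Omega \PP_{\X}} < \tfrac{1}{2}$ finally enters. Split $\PP_\Omega(H) = \PP_\Omega\PP_{\X}(H) + \PP_\Omega\PP_{\X^{\perp}}(H)$; the second piece, being a composition of orthogonal projections applied to $H$, is at most $\|\PP_{\X^{\perp}}(H)\|_F \le \|\PP_{\X^{\perp}}(H)\|_*$, while for the first $\|\PP_\Omega\PP_{\X}(H)\|_F \le \vertiii{\PP_\Omega\PP_{\X}}\|\PP_{\X}(H)\|_F \le \tfrac{1}{2}\|\PP_{\X}(H)\|_F$. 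To control $\|\PP_{\X}(H)\|_F$ I split again by $\Omega$: $\|\PP_{\X}(H)\|_F \le \|\PP_{\X}\PP_\Omega(H)\|_F + \|\PP_{\X}\PP_{\Omega^{\perp}}(H)\|_F \le \tfrac{1}{2}\|\PP_\Omega(H)\|_F + \|\PP_{\Omega^{\perp}}(H)\|_F$, using self-adjointness to identify $\vertiii{\PP_{\X}\PP_\Omega} = \vertiii{\PP_\Omega\PP_{\X}}$. Rearranging and using $\|\cdot\|_F \le \|\cdot\|_{\textnormal{sum}}$ yields $\|\PP_\Omega(H)\|_F \le \tfrac{2}{3}\|\PP_{\Omega^{\perp}}(H)\|_{\textnormal{sum}} + \tfrac{4}{3}\|\PP_{\X^{\perp}}(H)\|_*$. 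Substituting, the gap is at least $(\tfrac{1}{4}-\tfrac{\lambda}{6})\|\PP_{\X^{\perp}}(H)\|_* + \tfrac{2\lambda}{3}\|\PP_{\Omega^{\perp}}(H)\|_{\textnormal{sum}}$, whose coefficients are both strictly positive for every $\lambda \in (0,1)$.

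Hence the gap is strictly positive unless $\PP_{\X^{\perp}}(H) = 0$ and $\PP_{\Omega^{\perp}}(H) = 0$. In that case $H = \PP_{\X}(H)$ and $\supp(H) \subseteq \Omega$, so $H = \PP_\Omega\PP_{\X}(H)$, and $\vertiii{\PP_\Omega\PP_{\X}} < \tfrac{1}{2}$ forces $\|H\|_F \le \tfrac{1}{2}\|H\|_F$, i.e.\ $H = 0$. Uniqueness of $(\X,\S)$ follows. The only genuinely delicate step is the constant bookkeeping in the third paragraph; everything else is a direct application of \eqref{eq:subdiff}, the subdifferential of $\|\cdot\|_{\textnormal{sum}}$, and duality of the atomic/spectral and sum/max pairs.
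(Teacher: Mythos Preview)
Your proof is correct and follows the same strategy as the paper's: subgradient inequalities with well-chosen free components, then the dual-certificate identity \eqref{eq:mainLemma} to bound the cross term $\langle \W - \lambda\sgn(\S), H\rangle$, then the operator-norm hypothesis $\vertiii{\PP_\Omega\PP_{\X}}<\tfrac12$ to absorb $\|\PP_\Omega(H)\|_F$ into the two positive pieces. Your constant tracking is actually slightly sharper than the paper's (you take $\|\F_0\|_{\max}\le 1$ rather than $\le\tfrac12$, and you use a two-level splitting to bound $\|\PP_\Omega(H)\|_F$), and your final paragraph explicitly disposes of the degenerate case $\PP_{\X^\perp}(H)=\PP_{\Omega^\perp}(H)=0$ via $\vertiii{\PP_\Omega\PP_{\X}}<\tfrac12$, a point the paper's proof leaves implicit.
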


    \begin{proof}
Let $\Delta$ be a perturbation away from the supposed optimal point $\X$,
so $(\X + \Delta, \S - \Delta)$ is a feasible point of \eqref{RPCA}.        
Let $\W$ be dual to $\X$ and $\|\W^{\perp}\|\le \frac{1}{2}$, so $\W + \PP_{\X^{\perp}} \W^{\perp} \in \partial \|\X\|_*$, and let $\sgn(\S) + \F \in \partial \|\S\|_\text{sum}$.
 
We then have
        \begin{align*}
        \| \X + \Delta \|_* + \lambda \| \S - \Delta \|_{\textnormal{sum}} \ge \| \X \|_* + &\< \Delta, \W + \PP_{\X^{\perp}} \W^{\perp} \big> \\
        & + \lambda \left( \|\S\|_{\textnormal{sum}} - \<\Delta, \sgn(\S) + \F \big>  \right)
        \end{align*}
        By the duality of the tensor atomic and spectral norms, there exists a 
        $\W^{\perp}$ satisfying $\|\W^{\perp}\| = \frac{1}{2}$ and $\big< \PP_{\X^{\perp}} \Delta, \W^{\perp} \big> = \frac{1}{2} \| \PP_{\X^{\perp}} \Delta \|_*$. Similarly, we can choose $\F$ so that $\big< \F, \Delta \big> = -\frac{1}{2} \| \PP_{\Omega^{\perp}} \Delta \|_{\textnormal{sum}}$. Hence,
        \begin{align}
        & \|\X + \Delta\|_* - \| \X \|_* +  \lambda (\|\S - \Delta\|_{\nsum} - \| \S \|_{\nsum}) \\
        & \quad \ge \frac{1}{2} \left( \|\PP_{\X^{\perp}} \Delta \|_* + \lambda \| \PP_{\Omega^{\perp}} \Delta \|_{\nsum} \right) + \<\Delta, \W - \lambda \sgn(\S) \>
        \end{align}
        We would like to show that the right side of this inequality is positive unless $\Delta = 0$. To this end, we bound the magnitude of the last term. Using equation \eqref{eq:mainLemma},
        \begin{align*}
        \big|\<\Delta, \W - \lambda \sgn(\S) \big>\big| &= |\big< -\PP_{\X^{\perp}} \W^{\perp} + \lambda \F + \lambda \PP_{\Omega} \D, \Delta \big>| \\
        &\le |\big<\PP_{\X^{\perp}} \W^{\perp}, \Delta \big>| + \lambda |\big< \F, \Delta \big>| + \lambda |\big< \PP_{\Omega} \D, \Delta \big>| \\
        &\le \|\W^{\perp}\| \|\PP_{\X^{\perp}} \Delta\|_* + \lambda \|\F\|_{\max} \|\PP_{\Omega^\perp} \Delta\|_{\nsum} \\
        & \quad \quad + \lambda \| \PP_{\Omega} \D \|_F \| \PP_{\Omega} \Delta \|_F \\
        &< \frac{1}{4} \left( \|\PP_{\X^{ \perp}} \Delta \|_* + \lambda \| \PP_{\Omega^{\perp}} \Delta \|_{\nsum} \right) + \frac{\lambda}{8} \| \PP_{\Omega} \Delta \|_F,
        \end{align*}
        where we used the facts that $\PP_{\X^{\perp}}$ and $\PP_{\Omega}$ are self-adjoint and $\F$ is supported on $\Omega^{\perp}$. This yields
        \begin{align*}
        & \|\X + \Delta\|_* - \| \X \|_* +  \lambda (\|\S - \Delta\|_{\nsum} - \| \S \|_{\nsum}) \\
        & \quad > \frac{1}{4} \left( \|\PP_{\X^{\perp}} \Delta \|_* + \lambda \| \PP_{\Omega^{\perp}} \Delta \|_{\nsum} \right) - \frac{\lambda}{8} \| \PP_{\Omega} \Delta \|_F
        \end{align*}
        The last term can be bounded.
        \begin{align*}
        \|\PP_{\Omega} \Delta \|_F &= \|\PP_{\Omega} (\PP_{\X} + \PP_{\X^{\perp}}) \Delta \|_F \\
        &\le \|\PP_{\Omega} \PP_{\X} \Delta \|_F + \| \PP_{\Omega} \PP_{\X^{\perp}} \Delta \|_F \\
        &\le \frac{1}{2} \| \Delta\|_F + \|\PP_{\X^{\perp}} \Delta \|_F \\
        &\le \frac{1}{2} \|\PP_{\Omega} \Delta\|_F + \frac{1}{2} \|\PP_{\Omega^{\perp}} \Delta\|_F + \|\PP_{\X^{\perp}} \Delta \|_F,
        \end{align*}
        where we have used the facts that $\vertiii{ \PP_{\Omega} \PP_{\X} } \le \frac{1}{2}, \vertiii{ \PP_{\Omega} } \le 1$, and $\vertiii{ \PP_{\X} } \le 1$. Hence,
        \begin{equation*}
        \|\PP_{\Omega} \Delta \|_F \le \|\PP_{\Omega^{\perp}} \Delta\|_F + 2 \|\PP_{\X^{\perp}} \Delta \|_F.
        \end{equation*}
        We now have
        \begin{align*}
        \|\X + \Delta\|_* - \| \X \|_* +  &\lambda (\|\S - \Delta\|_{\nsum} - \| \S \|_{\nsum}) \\
        &> \frac{1}{4} \left( \|\PP_{\X^{\perp}} \Delta \|_* + \lambda \| \PP_{\Omega^{\perp}} \Delta \|_{\nsum} \right) - \frac{\lambda}{8} \big( \|\PP_{\Omega^{\perp}} \Delta\|_F \\
        & \quad \quad + 2 \|\PP_{\X^{\perp}} \Delta \|_F \big) \\
        &\ge \frac{1}{4} \left( 1 - \lambda \right) \| \PP_{\X^{\perp}} \Delta \|_* + \frac{\lambda}{8} \| \PP_{\Omega^{\perp}} \Delta \|_{\nsum}.
        \end{align*}
        The final inequality follows from the fact that for any tensor $\mathcal{T}$, $\|\mathcal{T}\|_{\nsum} \ge \|\mathcal{T}\|_F$ and $\|\mathcal{T}\|_* \ge \|\mathcal{T}\|_F$ (see \eqref{eq:fronuc}). This shows that the perturbation $\Delta$ leads to a strict increase in the objective, unless $\Delta = 0$.
    \end{proof}

In summary, to ensure exact recovery, it suffices to find a tensor $\W^{\perp}$ satisfying
    \begin{equation}
    \label{eq:conditions}
    \begin{cases}
    \PP_{\X^{\perp}} \W^{\perp} = \W^{\perp}, \\
    \|\W^{\perp}\| < \frac{1}{4}, \\
    \| \PP_{\Omega} (\W - \lambda \sgn(\S) + \W^{\perp}) \|_F \le \frac{\lambda}{8}, \\
    \| \PP_{\Omega^{\perp}} (\W + \W^{\perp}) \|_{\max} < \frac{\lambda}{4},
    \end{cases}
    \end{equation}
where we have used \eqref{eq:mainLemma} to state our conditions on $\F$ in terms of $\W^{\perp}$.
    
     Instead of proving the existence of $\W^{\perp}$ directly, we find tensors $\W^\LC$ and $\W^\S$ satisfying $\PP_{\X^{\perp}} (\W^\LC + \W^\S) = \W^\LC + \W^\S$, $\PP_{\Omega} \W^{\S} = \lambda \textnormal{sgn} (\S)$, and the following:
    \begin{equation} 
    \label{condWL}
    \begin{cases}
    (a) \ \ \|\W^\LC\| < \frac{1}{8}, \\
    (b) \ \ \| \PP_{\Omega} (\W + \W^\LC) \|_F \le \frac{\lambda}{8}, \\
    (c) \ \ \| \PP_{\Omega^{\perp}} (\W + \W^\LC) \|_{\max} < \frac{\lambda}{8}, \\
    \end{cases}
    \end{equation}
    
    \begin{equation} 
    \begin{cases}
    \label{condWS}
    (d) \ \ \|\W^\S\| < \frac{1}{8}, \\
    (e) \ \ \| \PP_{\Omega^{\perp}} \W^\S \|_{\max} < \frac{\lambda}{8}.
    \end{cases}
    \end{equation}
    If there exist tensors $\W^\LC$ and $\W^\S$ satisfying these conditions, then the tensor $\W^{\perp} = \W^\LC + \W^\S$ satisfies the conditions of equation \eqref{eq:conditions}, so exact recovery is certain. Similar to the argument in \cite{RPCA}, we construct $\W^\LC$ using a golfing scheme described in Section \ref{WL}, and we construct $\W^\S$ as a the solution to a certain least-squares problem which is outlined in Section \ref{sec:WS}.
    
    \subsection{Constructing $\W^\LC$}
    \label{WL}

    Our construction of $\W^\LC$ uses a variation of the golfing scheme developed in \cite{Gross2,Gross1} and later used in \cite{RPCA,CandesRecht2009,YuanZhang}. Let $n \defeq |\Omega^{\perp}| = d_1 d_2 d_3 - m = (1-\rho_s) d_1 d_2 d_3$. We create an i.i.d.\! uniformly distributed sequence of triples in $[d_1]\times [d_2] \times [d_3]$, call it $\{ (a_i,b_i,c_i) : 1 \le i \le n \}$. This sequence is created by sampling with replacement from $\Omega^{\perp}$ using the following process:
    \begin{enumerate}
        \item Initialize $\mathfrak{S}_0 = \emptyset$.
        \item For $i = 1,2,\cdots,n$, sample the triple $(a_i, b_i, c_i)$ from $\mathfrak{S}_{i-1}$ uniformly with probability $|\mathfrak{S}_{i-1}|/d_1d_2d_3$, and sample $(a_i,b_i,c_i)$ uniformly from $\Omega^{\perp} \backslash {\mathfrak{S}_{i-1}}$ with probability $1-|\mathfrak{S}_{i-1}|/d_1d_2d_3$.
        \item Set $\mathfrak{S}_i = \mathfrak{S}_{i-1} \cup \{ (a_i,b_i,c_i) \}$.
    \end{enumerate}
    Here, each $\mathfrak{S}_i$ is a set containing triples corresponding to indices of the zero elements of $\S$. A similar scheme is used to construct a dual certificate for the tensor completion problem in \cite{YuanZhang} with an important distinction: we are sampling from $\Omega^{\perp}$ to analyze $\S$, while the scheme in \cite{YuanZhang} samples from $\Omega$. This is due to the fact that we observe samples in the complement of $\Omega$, while the opposite is true in the analysis of \cite{YuanZhang}.
    
     Notice that $\mathbb{P}( (a_i,b_i,c_i) \in \mathfrak{S}_{i-1} | \mathfrak{S}_{i-1})$ is equal to the probability of the same event when the triples $(a_i,b_i,c_i)$ are drawn as i.i.d.\! random variables. Also, the conditional distribution of $(a_i,b_i,c_i)$ given $\mathfrak{S}_{i-1}$ and the event $(a_i,b_i,$ $c_i) \in \mathfrak{S}_{i-1}^c$ is uniform. Together, these properties imply that the points $(a_i,b_i,$ $c_i)$ are drawn uniformly from $[d_1] \times [d_2] \times [d_3]$ as i.i.d.\! random variables. Constructing this uniform sample via the golfing scheme is useful because it allows us to split our samples from $\Omega^{\perp}$ into a sequence of independent subsequences.
    
    We split the sequence $\{ (a_i,b_i,c_i) : 1 \le i \le n \}$ into $n_2$ subsets:
    \begin{equation}
        \Omega_k \defeq \{ (a_i,b_i,c_i) : n_1(k-1) \le i \le k n_1 \},
    \end{equation}
    where $| \Omega_k | \le n_1$. Notice that $n_1 n_2 \le n$ due to non-empty intersections among the $\Omega_k$. We choose the values $n_2 = \mathcal{O}\left( \log(n) \right)$ and $n_1 = \mathcal{O} \left( \left( \frac{d_1 d_2 d_3 n}{\mu_0 \log(n)} \right)^{1/2} \right)$. The constants $n_1$ and $n_2$ must be chosen appropriately, and we show why we choose these particular values in later subsections.
    
    With the sets $\Omega_k$ defined, we can define the corresponding projections $\PP_{\Omega_k}$, and use these projections to construct $\W^\LC$. With $\Y_0 = 0$, define the recursive sequence
    \begin{equation*}
    \Y_j = \Y_{j-1} + \frac{d_1 d_2 d_3}{n_1} \PP_{\Omega_j} \PP_{\X} (\W - \Y_{j-1}),
    \end{equation*}
    We set $\W^\LC = \PP_{\X^{\perp}} \Y_{n_2}$, and prove that this choice of $\W^\LC$ satisfies conditions $(a), (b)$, and $(c)$ in \eqref{condWL} with high probability.

\paragraph{Proof of \eqref{condWL}\textup{(\emph{a})}.} 
    Our argument uses ideas from \cite{YuanZhang}. In particular, we use the following two lemmas:

    \begin{lemma}[Lem. 6 in \cite{YuanZhang}]
        \label{specBounds}
        Let $\{ (a_i,b_i,c_i) \}$ be an ordered set of independently and uniformly distributed samples from $[d_1] \times [d_2] \times [d_3]$ and $\Omega_j$ defined as above. Assume that $\coherence(\X) \le \coherence_0$. Define $r \defeq \overline{r}(\X)$. Then for any fixed $j \in \{ 1,2,\cdots, n_2 \}$ and for all $\tau > 0$,
        \begin{align}
        \label{eq:lem4spec}
        &\mathbb{P} \left( \vertiii{ \PP_{\X} - \frac{d_1 d_2 d_3}{n_1} \PP_{\X} \PP_{\Omega_j} \PP_{\X} } \ge \tau \right) \\
        & \quad \quad \le 2 r^2 (d_1+d_2+d_3) \ \exp\left( - \frac{n_1 (\tau^2/2)}{(1 + 2\tau/3) (\coherence_0^2 r^2 (d_1+d_2+d_3)) } \right).
        \end{align}
        Furthermore,
        \begin{align}
        \label{lem4max}
        &\max_{ \| \T \|_{\max} = 1 } \mathbb{P}\left( \left\| \left( \PP_{\X} - \frac{d_1 d_2 d_3}{n_1} \PP_{\X} \PP_{\Omega_j} \PP_{\X} \right) \T \right\|_{\max} \ge \tau \right) \\
        & \quad \quad \le 2 d_1 d_2 d_3 \exp\left( - \frac{n_1 ( \tau^2/2 ) }{(1+2\tau/3) \coherence_0^2 r^2 (d_1+d_2+d_3)} \right).
        \end{align}
    \end{lemma}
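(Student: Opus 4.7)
The plan is to recognize the random operator $\tfrac{d_1 d_2 d_3}{n_1}\PP_{\X}\PP_{\Omega_j}\PP_{\X}$ as an empirical average of $n_1$ i.i.d.\ rank-one self-adjoint operators and apply a noncommutative Bernstein inequality (Tropp-style). Concretely, since the triples $(a_l,b_l,c_l)$ are i.i.d.\ uniform on $[d_1]\times[d_2]\times[d_3]$ and $\mathbb{E}\bigl[e_{a_l,b_l,c_l}\otimes e_{a_l,b_l,c_l}\bigr] = \tfrac{1}{d_1 d_2 d_3}I$, I would write
$$
\PP_{\X} - \tfrac{d_1 d_2 d_3}{n_1}\PP_{\X}\PP_{\Omega_j}\PP_{\X} = \sum_{l=1}^{n_1} Z_l,\qquad Z_l \defeq \tfrac{1}{n_1}\PP_{\X} - \tfrac{d_1 d_2 d_3}{n_1}\bigl(\PP_{\X}e_{a_l,b_l,c_l}\bigr)\otimes\bigl(\PP_{\X}e_{a_l,b_l,c_l}\bigr),
$$
so that the $Z_l$ are i.i.d., self-adjoint, and mean zero.

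The key deterministic ingredient is a coherence bound on $\|\PP_{\X}e_{i,j,k}\|_F^2$. Expanding $\PP_{\X} = \PP_{U,V,W} + \PP_{U^\perp,V,W} + \PP_{U,V^\perp,W}+\PP_{U,V,W^\perp}$ and applying $\|\PP_U e_i\|^2\le \coherence_0 r_1/d_1$ coordinatewise, each term contributes at most a product of two such factors (the third factor being $\le 1$), yielding
$$
\|\PP_{\X}e_{i,j,k}\|_F^2 \;\le\; \frac{\coherence_0^2(r_1 r_2 d_3 + r_1 r_3 d_2 + r_2 r_3 d_1)}{d_1 d_2 d_3} \;=\; \frac{\coherence_0^2 r^2(d_1+d_2+d_3)}{d_1 d_2 d_3}.
$$
This immediately gives the uniform bound $\vertiii{Z_l}\le R \defeq \coherence_0^2 r^2(d_1+d_2+d_3)/n_1$. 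For the variance I would use $(\PP_{\X}e_{a,b,c}\otimes\PP_{\X}e_{a,b,c})^2 = \|\PP_{\X}e_{a,b,c}\|_F^2\cdot(\PP_{\X}e_{a,b,c}\otimes\PP_{\X}e_{a,b,c})$, pull out the $\|\cdot\|_F^2$ factor by the same coherence bound, and use $\mathbb{E}[\PP_{\X}e_{a,b,c}\otimes\PP_{\X}e_{a,b,c}]=\tfrac{1}{d_1d_2d_3}\PP_{\X}$ to obtain $\sigma^2 \defeq \vertiii{\sum_l \mathbb{E}[Z_l^2]}\le R$. Plugging $R$ and $\sigma^2$ into matrix Bernstein produces \eqref{eq:lem4spec}, with the prefactor $2r^2(d_1+d_2+d_3)$ arising as an upper bound on the dimension of the range of $\PP_{\X}$ (namely $r_1 d_2 d_3 + r_2 d_1 d_3 + r_3 d_1 d_2$, which is controlled by $r^2(d_1+d_2+d_3)$).

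For the entrywise claim \eqref{lem4max} I would, for each fixed index $(i,j,k)$ and each fixed $\T$ with $\|\T\|_{\max}=1$, decompose
$$
\bigl[(\PP_{\X}-\tfrac{d_1 d_2 d_3}{n_1}\PP_{\X}\PP_{\Omega_j}\PP_{\X})\T\bigr]_{i,j,k} = \sum_{l=1}^{n_1}\xi_l,\quad \xi_l \defeq \tfrac{1}{n_1}(\PP_{\X}\T)_{i,j,k} - \tfrac{d_1 d_2 d_3}{n_1}(\PP_{\X}e_{i,j,k})_{a_l,b_l,c_l}(\PP_{\X}\T)_{a_l,b_l,c_l},
$$
using self-adjointness of $\PP_{\X}$, and invoke scalar Bernstein. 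Bounding $|\xi_l|$ and $\mathbb{E}[\xi_l^2]$ reduces to Cauchy--Schwarz on two factors, both of which inherit the same $\coherence_0^2 r^2(d_1+d_2+d_3)/(d_1 d_2 d_3)$ scale from the coherence inequality for $\PP_{\X}e_{i,j,k}$, together with $\|\PP_{\X}\T\|_{\max}\le\|\T\|_{\max}$. A union bound over the $d_1 d_2 d_3$ entries produces the prefactor in \eqref{lem4max}, and the stated $\sup_{\|\T\|_{\max}=1}$ is reached since the per-entry bound depends on $\T$ only through $\|\T\|_{\max}$.

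The main obstacle is the scalar Bernstein step: one must avoid the naive Cauchy--Schwarz on $\langle\PP_{\X}e_{a_l,b_l,c_l},\T\rangle$, which would spend a fatal factor of $\|\T\|_F\le\sqrt{d_1 d_2 d_3}\|\T\|_{\max}$, and instead rewrite this inner product as $(\PP_{\X}\T)_{a_l,b_l,c_l}$ so that only the coherence of $\PP_{\X}$ and the max-norm of $\T$ enter. Once the per-summand bound and the variance are matched on the common scale $R=\sigma^2$, the two stated inequalities drop out of operator/scalar Bernstein with identical functional form.
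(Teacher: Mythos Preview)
The paper does not prove this lemma; it is quoted verbatim from \cite{YuanZhang} and only \emph{used} in the construction of $\W^\LC$. So there is no ``paper's own proof'' to compare against. Your outline is essentially the standard Yuan--Zhang argument: matrix Bernstein for the operator-norm bound via the decomposition into i.i.d.\ self-adjoint rank-one summands, and scalar Bernstein plus a union bound for the entrywise bound. The coherence estimate $\|\PP_\X e_{i,j,k}\|_F^2 \le \coherence_0^2 r^2(d_1+d_2+d_3)/(d_1 d_2 d_3)$ is exactly Lemma~\ref{coherLemma}, and your identification of the dimension prefactor as an upper bound on $\dim\operatorname{ran}\PP_\X$ is correct.

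There is, however, a genuine gap in your entrywise argument. You invoke $\|\PP_\X\T\|_{\max}\le\|\T\|_{\max}$, but $\PP_\X$ is an orthogonal projection in the Frobenius inner product, not a contraction in $\ell_\infty$; in general this inequality fails (an orthogonal projection onto a one-dimensional subspace already provides counterexamples once the spanning vector is spread out). Without it, your bounds on $|\xi_l|$ and $\sum_l\mathbb{E}[\xi_l^2]$ do not reduce to the scale $R=\coherence_0^2 r^2(d_1+d_2+d_3)/n_1$ with the constant you need. The resolution is that in every application of this lemma in the paper (and in \cite{YuanZhang}), the tensor $\T$ satisfies $\PP_\X\T=\T$ --- see the recursion $\Z_j=\PP_\X(I-\tfrac{d_1d_2d_3}{n_1}\PP_{\Omega_j})\PP_\X\Z_{j-1}$ and the explicit constraint in Lemma~\ref{sparseBounds}. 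Under $\PP_\X\T=\T$ your claimed inequality is trivially an equality, and the scalar-Bernstein step goes through. So either add the hypothesis $\PP_\X\T=\T$ to the max over $\T$ (this is how the result is actually used), or replace the offending step with a different estimate; the bare inequality $\|\PP_\X\T\|_{\max}\le\|\T\|_{\max}$ cannot stand on its own.
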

    \begin{lemma}[Lem. 7 in \cite{YuanZhang}]
        \label{sparseBounds}
        Let $\alpha(\X) \le \alpha_0$, $r \defeq \overline{r}(\X)$, and $q_1^* = (c + \log(d_1+d_2+d_3))^2 \alpha_0^2 r \log(d_1+d_2+d_3)$. There exists a positive constant $c_1$ so that for any constants $c > 0$ and $\delta_1 \in [1/(\log(d_1+d_2+d_3)), 1)$,
        \begin{equation*}
        n_1 \ge c_1 \left[ q_1^* (d_1+d_2+d_3)^{1+\delta_1} + \sqrt{q_1^*(1+c)\delta_1^{-1} d_1 d_2 d_3} \right]
        \end{equation*}
        implies
        \begin{equation*}
        \max_{\substack{\PP_{\X} \T = \T \\ \|\T\|_{\max} \le \|\W\|_{\max}}} \mathbb{P}\left( \left\|(\PP_{\X} - \frac{d_1d_2d_3}{n_1} \PP_{\X} \PP_{\Omega_1} \PP_{\X}) \T \right\| \ge \frac{1}{16} \right) \le (d_1+d_2+d_3)^{-c - 1},
        \end{equation*}
        where $\W$ is dual to $\X$.
    \end{lemma}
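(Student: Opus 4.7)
The plan is to write $\bigl(\PP_{\X} - \tfrac{d_1d_2d_3}{n_1}\PP_{\X}\PP_{\Omega_1}\PP_{\X}\bigr)\T$ as a sum of i.i.d.\ centered random tensors, one for each sample in $\Omega_1$, and then bound its tensor spectral norm by combining a scalar Bernstein inequality with an $\epsilon$-net argument on the three unit spheres. Treating $\Omega_1 = \{(a_i,b_i,c_i):1\le i\le n_1\}$ as an i.i.d.\ uniform sample of $[d_1]\times[d_2]\times[d_3]$ (and interpreting $\PP_{\Omega_1}$ as a sum with multiplicity, which only tightens the estimate), and letting $E_i=e_{a_i}\otimes e_{b_i}\otimes e_{c_i}$, I would set
\[
Z_i \;\defeq\; \tfrac{1}{n_1}\PP_{\X}\T \;-\; \tfrac{d_1d_2d_3}{n_1}\PP_{\X}\!\bigl(E_i\,\langle E_i,\T\rangle\bigr),
\]
so that $\mathbb{E}Z_i=0$ and $\sum_{i=1}^{n_1}Z_i$ is exactly the tensor whose spectral norm we want to control (after using $\PP_{\X}\T=\T$).

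For fixed unit vectors $u,v,w$, I would apply a scalar Bernstein inequality to the i.i.d.\ sum $\sum_i \langle Z_i,u\otimes v\otimes w\rangle$. The required uniform bound $M$ and variance proxy $\sigma^2$ are both controlled by two coherence ingredients: the bound $\|\T\|_{\max}\le\|\W\|_{\max}\le \alpha_0\sqrt{\overline{r}/(d_1d_2d_3)}$ (coming from \eqref{eq:alphaDef}), and a bound on $\|\PP_{\X}(u\otimes v\otimes w)\|_{\max}$ following from $\coherence(\X)\le\coherence_0$. Plugging these into the standard Bernstein moment estimates expresses $M$ and $\sigma^2$ cleanly in terms of $\alpha_0$, $\coherence_0$, $\overline{r}$, $n_1$, and the ambient dimensions.

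Next I would cover each sphere $S^{d_k-1}$ by an $\epsilon$-net $N_k$ of cardinality at most $(3/\epsilon)^{d_k}$ and invoke the standard approximation inequality $\|\Y\|\le(1-3\epsilon)^{-3}\max_{(u,v,w)\in N_1\times N_2\times N_3}\langle\Y,u\otimes v\otimes w\rangle$. A union bound over the product net (of size at most $\exp(C(d_1+d_2+d_3)\log(1/\epsilon))$), combined with the per-point Bernstein tail at level $1/16$, gives an overall failure probability of the form
\[
\exp\!\bigl(C(d_1+d_2+d_3)\log(1/\epsilon)\bigr)\cdot\exp\!\Bigl(-\tfrac{c\,n_1}{\sigma^2+M}\Bigr).
\]
Requiring this to be at most $(d_1+d_2+d_3)^{-c-1}$ produces two regimes: the sub-Gaussian regime (dominated by $\sigma^2$) yields the $\sqrt{q_1^*(1+c)\delta_1^{-1}d_1d_2d_3}$ term in the hypothesis on $n_1$, while the large-deviation regime (dominated by $M$) yields the $q_1^*(d_1+d_2+d_3)^{1+\delta_1}$ term. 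The parameter $\delta_1$ is chosen via $\log(1/\epsilon)\sim\delta_1\log(d_1+d_2+d_3)$ to trade off the net resolution against the length of the sample.

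The main obstacle is that, unlike in Lemma \ref{specBounds} where the Frobenius-side operator norm admits a direct matrix Bernstein bound, the tensor spectral norm is not the operator norm of any single matrix, so matrix Bernstein does not apply directly. The $\epsilon$-net detour costs an exponential factor in $d_1+d_2+d_3$, which is precisely what forces the unusual exponent $1+\delta_1$ on $(d_1+d_2+d_3)$ inside $n_1$. A secondary technicality is that the random variables $\langle Z_i,u\otimes v\otimes w\rangle$ are only bounded with high probability (rare large entries of $\PP_{\X}E_i$ can exceed the nominal coherence magnitude); handling these by truncation, or by an asymmetric Bernstein variant, is where the logarithmic factors $(c+\log(d_1+d_2+d_3))^2$ inside $q_1^*$ enter the estimate.
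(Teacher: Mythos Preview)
The paper does not prove Lemma~\ref{sparseBounds}; it is quoted verbatim as Lemma~7 of \cite{YuanZhang} and used as a black box. There is therefore no in-paper proof to compare against.

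That said, your sketch is essentially the argument Yuan and Zhang give: write the tensor as a sum of i.i.d.\ centered summands indexed by the samples in $\Omega_1$, apply scalar Bernstein at a fixed triple $(u,v,w)$ using the coherence bounds $\|\T\|_{\max}\le\alpha_0\sqrt{\overline{r}/(d_1d_2d_3)}$ and the analogue of Lemma~\ref{coherLemma}, and then pass to the supremum via a net/union bound on $S^{d_1-1}\times S^{d_2-1}\times S^{d_3-1}$. One minor difference is that \cite{YuanZhang} do not use a vanilla $\epsilon$-net but the ``digitalized'' nets $\mathcal{B}_{m_j,d_j}$ (the same ones that appear later in this paper when bounding $\|\Q(G)\|$); these give a slightly sharper cardinality count and the multiplicative constant $8$ in the approximation inequality, but the mechanism is identical to what you describe. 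Your identification of the two regimes --- the variance-dominated term producing $\sqrt{q_1^*(1+c)\delta_1^{-1}d_1d_2d_3}$ and the range-dominated term producing $q_1^*(d_1+d_2+d_3)^{1+\delta_1}$ --- matches the structure of the stated bound on $n_1$, and your reading of $\delta_1$ as the net-resolution parameter is correct.
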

\noindent Lemma \ref{sparseBounds} puts a lower bound on acceptable choices for $n_1$, and because $n_1 n_2 \le n$, this also puts an upper bound on acceptable choices for $n_2$. Our choices of $n_1$ and $n_2$ satisfy these conditions.
    
Instead of working directly with the sequence $\{\Y_j\}$, it is easier to work with the sequence
\begin{equation*}
\Z_j \defeq \W - \PP_{\X} \Y_j.
\end{equation*}
Because $\PP_{\X} \W = \PP_{\X}^0 \W = \W$, it is clear that $\PP_{\X} \Z_j = \Z_j$ for all $j$. From the definition of $\Y_j$, we derive the useful recursion
\begin{align}
\label{eq:recur}
        \Z_j &= \W - \PP_{\X} \left( \Y_{j-1} + \frac{d_1 d_2 d_3}{n_1} \PP_{\Omega_j} \PP_{\X} (\W - \PP_{\X} \Y_{j-1}) \right) \notag \\
        &= \PP_{\X} \left(I - \frac{d_1 d_2 d_3}{n_1} \PP_{\Omega_j} \right) \PP_{\X} \Z_{j-1}.
\end{align}
    We also see that
    \begin{align}
    \Y_{n_2} &= \Y_{n_2-1} + \frac{d_1 d_2 d_3}{n_1}\PP_{\Omega_j} \Z_{n_2 - 1} \notag\\
    & = \Y_{n_2-2} + \frac{d_1 d_2 d_3}{n_1}\PP_{\Omega_j} \Z_{n_2 - 2} + \frac{d_1 d_2 d_3}{n_1}\PP_{\Omega_j} \Z_{n_2 - 1} \notag \\
    & = \sum\limits_j \frac{d_1 d_2 d_3}{n_1} \PP_{\Omega_j} \Z_{j-1}. \label{eq:Ysum}
    \end{align}
    These facts imply
    \begin{align}
    \label{eq:sum}
    \W^\LC &= \PP_{\X^{\perp}} \Y_{n_2} \notag \\ 
    &= \PP_{\X^{\perp}} \sum\limits_j \frac{d_1 d_2 d_3}{n_1} \PP_{\Omega_j} \Z_{j-1} \notag \\
    &= \PP_{\X^{\perp}} \sum\limits_j \left( \frac{d_1 d_2 d_3}{n_1} \PP_{\Omega_j} - I \right) \Z_{j-1},
    \end{align}
    where the last equality follows from the fact that $\PP_{\X} \Z_{j-1} = \Z_{j-1}$. We now use the sequence $\{\Z_j\}$ to show that $\|\W^\LC\| < \frac{1}{8}$. For convenience, let $\Rc_j = I - \frac{d_1 d_2 d_3}{n_1} \PP_{\Omega_j}$. To prove the bound $\|\W^\LC\| < \frac{1}{8}$, we decompose $\W^\LC$ into the sum shown in equation \eqref{eq:sum} and use Lemma \ref{specBounds} to bound the spectral norm of each of the terms.
    \begin{align*}
    \mathbb{P} \Bigg( \| \PP_{\X^{\perp}}  \Y_{n_2} &\| \ge \frac{1}{8} \Bigg) 
    \le\mathbb{P} \left( \left\| \sum_{j=1}^{n_2} \Rc_j \Z_{j-1} \right\| \ge \frac{1}{8} \right) \\
    &\le \mathbb{P} \left( \left\| \Rc_1 \Z_0 \right\| \ge \frac{1}{16} \right) + \mathbb{P}\left( \|\Z_1\|_{\max} \ge \|\W\|_{\max}/4 \right) \\
    &\quad + \mathbb{P}\left( \left\| \sum_{j=2}^{n_2} \Rc_j \Z_{j-1} \right\| \ge \frac{1}{16}, \ \|\Z_1\|_{\max} < \|\W\|_{\max}/4 \right) \\
    &\le \mathbb{P} \left( \|\Rc_1 \Z_0\| \ge \frac{1}{16} \right) + \mathbb{P}\left( \|\Z_1\|_{\max} \ge \|\W\|_{\max}/4 \right) \\
    &\quad + \mathbb{P}\left( \| \Rc_2 \Z_1 \| \ge \frac{1}{32}, \ \|\Z_1\|_{\max} < \|\W\|_{\max}/4 \right) \\
    &\quad + \mathbb{P} \left( \|\Z_2\|_{\max} \ge \|\W\|_{\max}/8, \|\Z_1\|_{\max} < \|\W\|_{\max}/4 \right) \\
    &\quad + \mathbb{P}\left( \left\| \sum_{j=3}^{n_2} \Rc_j \Z_{j-1} \right\| \ge \frac{1}{32}, \|\Z_2\|_{\max} < \|\W\|_{\max}/8 \right) \\
    &\le \mathbb{P} \left( \|\Rc_1 \Z_0\| \ge \frac{1}{16} \right) + \mathbb{P}\left( \|\Z_1\|_{\max} \ge \|\W\|_{\max}/4 \right) \\
    &\quad + \sum_{j=2}^{n_2 - 1} \mathbb{P} \big( \| \PP_{\X^*} \Rc_j \PP_{\X^*} \Z_{j-1} \|_{\max} \ge \|\W\|_{\max}/2^{j+1}, \\
    & \quad \quad \quad \quad \quad \quad \quad \quad \|\Z_{j-1}\|_{\max} \le \|\W\|_{\max}/2^j \big) \\
    &\quad + \sum_{j=2}^{n_2} \mathbb{P}\left( \| \Rc_j \Z_{j-1} \| \ge 2^{-3-j}, \ \|\Z_{j-1}\|_{\max} \le \|\W\|_{\max}/2^{j} \right) \\
    &\le \mathbb{P} \left( \|\Rc_1 \Z_0\| \ge \frac{1}{16} \right) + \mathbb{P}\left( \|\Z_1\|_{\max} \ge \|\W\|_{\max}/4 \right) \\
    &\quad + \sum_{j=2}^{n_2 - 1} \mathbb{P} \left( \| \PP_{\X^*} \Rc_j \PP_{\X^*} \Z_{j-1} \|_{\max} \ge \|\Z_{j-1}\|_{\max}/2 \right) \\
    &\quad + \sum_{j=2}^{n_2} \mathbb{P}\left( \| \Rc_j \Z_{j-1} \| \ge 2^{-3} \|\Z_{j-1}\|_{\max}/\|\W\|_{\max} \right).
    \end{align*}
    For the penultimate inequality, we applied the recursion \eqref{eq:recur}. Because $(\Rc_j,$ $\Z_{j-1})$ are independent, Lemma \ref{specBounds} with the maximizing $\T = \Z_{j-1}/\|\Z_{j-1} \|_{\max}$ and $\tau = \frac{1}{8}$ gives the bound
    \begin{align}
    \mathbb{P} \left( \| \PP_{\X^{\perp}} \Y_{n_2} \| \ge \frac{1}{8} \right) &\le \max_{\substack{\PP_{\X} \T = \T \\ \|\T\|_{\max} \le 1}} \quad \Bigg[ \sum_{j=1}^{n_2 - 1} \mathbb{P} \left( \| \PP_{\X} \Rc_j \PP_{\X} \T \|_{\max} \ge \frac{1}{4} \right) \\
    &\quad \quad \quad + \sum_{j=1}^{n_2} \mathbb{P}\left( \| \Rc_j \T \| \ge \frac{1}{16 \|\W\|_{\max}} \right) \Bigg] \\
    &\le n_2 \Bigg[ \max_{\substack{\PP_{\X} \T = \T \\ \|\T\|_{\max} \le 1}} \quad \mathbb{P}\left( \|\PP_{\X} \Rc_1 \PP_{\X} \T \|_{\max} > \frac{1}{4} \right) \\
    & \quad \quad \quad + \mathbb{P}\left( \|\Rc_1 \T\| > \frac{1}{16 \|\W\|_{\max}} \right) \Bigg] \\
    &\le 2 n_2 d_1 d_2 d_3 \exp\left( - \frac{(3/112) n_1}{ \coherence_0^2 r^2 (d_1+d_2+d_3)} \right) \\
    & \quad \quad \quad + n_2 \left[ \max_{\substack{\PP_{\X} \T = \T \\ \|\T\|_{\max} \le \|\W\|_{\max}}} \mathbb{P}\left( \|\Rc_1 \T\| > \frac{1}{16} \right) \right].
    \end{align}
    Applying Lemma \ref{sparseBounds} to bound the final term, we have shown
    \begin{align}
        \mathbb{P} \Big( \| \PP_{\X^{\perp}} \Y_{n_2} &\| \ge \frac{1}{8} \Big) \\
        &\le 2 n_2 d_1 d_2 d_3 \exp\left( - \frac{(3/112) n_1}{ \coherence_0^2 r^2 (d_1+d_2+d_3)} \right) + (d_1 + d_2 + d_3)^{-1-c}.
    \end{align}
    This shows that $\W^{\LC} = \PP_{\X^{\perp}} \Y_{n_2}$ satisfies condition \eqref{condWL}(a) with high probability.

\paragraph{Proof of \eqref{condWL}\textup{(\emph{b})}.}
    
We would like to prove that $\| \PP_{\Omega} (\W + \W^\LC)\|_F < \frac{\lambda}{8}$. By the definition of the operator norm in \eqref{opNorm}, equation \eqref{eq:lem4spec} of Lemma \ref{specBounds} implies
\begin{equation*}
\left\| \left( \PP_{\X} - \frac{d_1 d_2 d_3}{n_1} \PP_{\X} \PP_{\Omega_j} \PP_{\X} \right) \T \right\|_F \le \tau \| \T \|_F
\end{equation*}
with high probability for all $\T$ satisfying $\PP_{\X} \T = \T$. Consequentially, using the independence of $\Omega_j$ and $\Z_{j-1}$, we have
\begin{equation}
\forall j, \quad \|\Z_j\|_F \le \tau \|\Z_{j-1}\|_F \implies \|\Z_{n_2}\|_F \le \tau^{n_2} \|\W\|_F.
\end{equation}
Using equation \eqref{eq:alphaDef} and 
assuming the coherence of $\X$ satisfies $\alpha(\X)\le \alpha_0$, 
\begin{equation}
\| \W \|_{\max} \le \alpha_0 \left( \frac{\overline{r}}{d_1 d_2 d_3} \right)^{1/2}.
\end{equation}
Therefore, since $\|\X\|_F \le \sqrt{d_1d_2d_3}\|\X\|_{\max}$ for all tensors $\X$ of size $d_1 \times d_2 \times d_3$, 
\begin{equation}
\|\Z_{n_2} \|_F \le \tau^{n_2} \alpha_0 \sqrt{\overline{r}}.
\end{equation}
Hence,
\begin{align}
\| \PP_{\Omega} (\W + \W^\LC) \|_F &= \| \PP_{\Omega} (\W + (I - \PP_{\X}) \Y_{n_2}) \|_F \\
&= \| \PP_{\Omega} \Z_{n_2} \|_F \\
&\le \tau^{n_2} \alpha_0 \sqrt{\overline{r}},
\end{align}
where we used the fact that $\PP_{\Omega} \Y_{n_2} = 0$. Let $\tau = \mathcal{O}(e^{-1})$ and $n_2 = \mathcal{O} \left( \log(n) \right)$. Because $\overline{r} \le \rho_r \left( \frac{n}{(d_1+d_2+d_3) \log(n) \alpha_0^4 \coherence_0^2 } \right)^{1/2}$ (cf.~\eqref{eq:mainResult}), the above bound is smaller than $\frac{\lambda}{8}$ as long as $\rho_r$ 
is a small enough constant.

These parameter choices also ensure the probability that this bound holds, given in Lemma \ref{sparseBounds}, is large. For the sequel, we require that $\|\PP_{\Omega} (\W + \W^\LC)\|_F < \frac{\lambda}{16}$, and it is clear that this bound also holds with high probability.

\paragraph{Proof of \eqref{condWL}\textup{(\emph{c})}.}   
    
    We would like to prove that $\| \PP_{\Omega^{\perp}} (\W + \W^\LC) \|_{\max} < \frac{\lambda}{8}$. We have that $\W + \W^\LC = \Y_{n_2} + \Z_{n_2}$, so $\| \PP_{\Omega^{\perp}} (\W + \W^\LC) \|_{\max} \le \|\Y_{n_2}\|_{\max} + \|\Z_{n_2}\|_{\max}$. From the previous section, we already have the bound $\| \Z_{n_2} \|_{\max} \le \| \Z_{n_2} \|_F \le \frac{\lambda}{16}$, so we must only bound $\| \Y_{n_2} \|_{\max}$.
    \begin{align}
    \|\Y_{n_2}\|_{\max} &= \left( \frac{d_1 d_2 d_3}{n_1} \right) \left\| \sum_{j=1}^{n_2} \PP_{\Omega_j} \Z_{j-1} \right\|_{\max} 
    & \textnormal{(By equation \eqref{eq:Ysum})}\\
    &\le \left( \frac{d_1 d_2 d_3}{n_1} \right) \sum_{j=1}^{n_2} \left\| \PP_{\Omega_j} \Z_{j-1} \right\|_{\max} \\
    &\le \left( \frac{d_1 d_2 d_3}{n_1} \right) \sum_{j=1}^{n_2} \left\| \Z_{j-1} \right\|_{\max} \\
    &\le \left( \frac{d_1 d_2 d_3}{n_1} \right) \left( \sum_{j=0}^{n_2-1} \tau^{j} \right) \left\| \W \right\|_{\max} \\
    &\le \left( \frac{d_1 d_2 d_3}{n_1} \right) \left( \sum_{j=0}^{n_2-1} \tau^{j} \right) \alpha_0 \left( \frac{\overline{r}}{d_1 d_2 d_3} \right)^{1/2} & \textnormal{(By equation \eqref{eq:alphaDef})} \\
    &\le \left( \frac{\alpha_0 \sqrt{ \overline{r} d_1 d_2 d_3}}{n_1} \right) \left( 1 - \tau \right)^{-1}.
    \end{align}
With $n_1 = \mathcal{O}\left(  \left( \frac{d_1 d_2 d_3 n}{\mu_0 \log(n)^2} \right)^{1/2}  \right)$ and $\overline{r} \le \rho_r \left( \frac{n}{(d_1+d_2+d_3) \log(n) \alpha_0^4 \coherence_0^2 } \right)^{1/2}$, it is clear that $\|\Y_{n_2}\| \le \frac{\lambda}{16}$ when $\rho_r$ and $\rho_s$ are small enough, so the desired result holds.
    
These choices of parameters are consistent; it is straightforward to see that $n_1 n_2 \le n$, and that these choices of $n_1$ and $\tau$ ensure that the bounds outlined in Lemma \ref{specBounds} hold with exponentially small probability.

\subsection{Constructing $\W^\S$}
    \label{sec:WS}
    
    In the previous sections, we supposed that the support of $\Omega$ was uniformly distributed over all sets of cardinality $m$. For our construction of $\W^\S$, it is easier to work under the assumption that the support of $\Omega$ follows a Bernoulli distribution with parameter $\rho_s$:
    \begin{equation}
    \label{eq:berdist}
        \Omega_{i,j,k} = \begin{cases}
        1 & \textnormal{w.p. } \rho_s,     \\
        0 & \textnormal{w.p. } 1 - \rho_s. \\
        \end{cases}
    \end{equation}
    We can construct $\W^\S$ under this model and show that $\W^\S$ satisfies conditions \eqref{condWS}(\emph{d}) and \eqref{condWS}(\emph{e}) with high probability. It follows that $\W^\S$ under the original uniform model satisfies conditions \eqref{condWS}(\emph{d}) and \eqref{condWS}(\emph{e}) with high probability as well. This correspondence between the Bernoulli and uniform distributions is well-known and used in \cite{RPCA} as well, although in a slightly different way. We include a proof in Appendix \ref{app:berunif} for completeness.
    
    We make one more simplification to our model before constructing $\W^\S$. Under the model of Theorem \ref{main}, the signs of the entries of $\S$ are arbitrary, but it is more convenient to assume that the signs follow a Bernoulli model with parameter $\frac{\rho_s}{2}$. The equivalence of these two models is also well-known and used in \cite{RPCA}. We save a formal discussion of this equivalence for Appendix \ref{app:berunif}. With these changes to our model in place, we are now prepared to construct $\W^\S$.
    
    Following the ideas behind the construction of the dual certificate for matrix RPCA \cite{RPCA}, we let
    \begin{equation}
    \W^\S = \lambda \PP_{\X^{\perp}} (\PP_{\Omega} - \PP_{\Omega} \PP_{\X} \PP_{\Omega} )^{-1} \sgn(\S).
    \end{equation}
    Our assumption $\vertiii{ \PP_{\Omega} \PP_{\X} } < \frac{1}{2}$ implies $\vertiii{ \PP_{\X} \PP_{\Omega} \PP_{\X} } < \frac{1}{4}$ since these are orthogonal projections, so the inverse of $\PP_{\Omega} - \PP_{\Omega} \PP_{\X} \PP_{\Omega}$ as an operator mapping the range of $\PP_{\Omega}$ onto itself exists. It is also important that $\PP_{\Omega} \W^\S = \lambda \PP_{\Omega} (I - \PP_{\X})(\PP_{\Omega} - \PP_{\Omega} \PP_{\X} \PP_{\Omega} )^{-1} \sgn(\S) = \lambda \sgn(\S)$. As in the matrix case, $\W^\S$ can be interpreted as the tensor with minimum Frobenius norm over the set $\{ \mathcal{T} : \PP_{\X^{\perp}} \mathcal{T} = \T, \ \PP_{\Omega} \T = \lambda \sgn(\S) \}$.

\paragraph{Proof of \eqref{condWS}\textup{(\emph{d})}.}
    
    With $\W^\S = \lambda \PP_{\X^{\perp}} (\PP_{\Omega} - \PP_{\Omega} \PP_{\X} \PP_{\Omega} )^{-1} \sgn(\S)$, we would like to show that $\|\W^\S\| < \frac{1}{8}$. Let $G = \sgn(\S)$ for convenience. The elements of $G$ follow the distribution
    \begin{equation}
    G_{i,j,k} = \begin{cases}
    1 & \textnormal{with probability } \frac{\rho_s}{2}, \\
    0 & \textnormal{with probability } 1 - \rho_s, \\
    -1 & \textnormal{with probability } \frac{\rho_s}{2}.
    \end{cases}
    \end{equation}
    We can then write
    \begin{align}
    \| \W^\S \| &= \lambda \left\| \PP_{\X^{\perp}} (\PP_{\Omega} - \PP_{\Omega} \PP_{\X} \PP_{\Omega} )^{-1} G \right\| \\
    &= \lambda \left\|\PP_{\X^{\perp}} \sum_{k=0}^{\infty} (\PP_{\Omega} \PP_{\X} \PP_{\Omega})^k G \right\| \\
    &\le \lambda \left\|\PP_{\X^{\perp}} G \right\| + \lambda \left\| \PP_{\X^{\perp}} \sum_{k=1}^{\infty} (\PP_{\Omega} \PP_{\X} \PP_{\Omega})^k G \right\| \\
    &\le \lambda \left\| G \right\| + \lambda \left\| \sum_{k=1}^{\infty} (\PP_{\Omega} \PP_{\X} \PP_{\Omega})^k G \right\|
    \label{WS},
    \end{align}
where we have used the Neumann series expansion of the operator $(\PP_{\Omega} - \PP_{\Omega} \PP_{\X} \PP_{\Omega} )^{-1}$. The first term can be bounded using existing tail bounds on the spectral norm of random tensors. The distribution of $G$ is subgaussian, so we can apply the following result from \cite{Tomioka}.
    \begin{lemma}
        \label{lem:SubGaussBound}
        The following holds with probability at least $1-\delta$:
        \begin{equation}
        \|G\| \le \sqrt{8 \left( d_1 + d_2 + d_3 \right) \log(6/ \log(3/2)) + \log (2/\delta)}
        \end{equation}
    \end{lemma}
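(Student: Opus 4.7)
The plan is to prove this via a standard $\epsilon$-net / covering argument over the unit spheres in the three modes, combined with Hoeffding-type concentration for the tensor inner product at each fixed triple of unit vectors. Concretely, recall that
\[
\|G\| = \max_{\|u\|=\|v\|=\|w\|=1} \langle G, u \otimes v \otimes w \rangle = \max_{u,v,w} \sum_{i,j,k} G_{i,j,k} u_i v_j w_k.
\]
For any fixed unit $u,v,w$, the sum is over independent mean-zero random variables bounded by $|u_i v_j w_k|$ with $\sum_{i,j,k} (u_i v_j w_k)^2 = 1$, so Hoeffding's lemma (since $|G_{ijk}|\le 1$) yields the subgaussian tail $\mathbb{P}(|\langle G, u\otimes v\otimes w\rangle| > t) \le 2\exp(-t^2/2)$.

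Next, I would introduce $\epsilon$-nets $\mathcal{N}_k \subset S^{d_k - 1}$ of cardinality at most $(1+2/\epsilon)^{d_k}$ (by the standard volumetric argument). For arbitrary unit vectors $u,v,w$ and nearest net points $u',v',w'$, a telescoping decomposition
\[
\langle G, u\!\otimes\! v\!\otimes\! w\rangle - \langle G, u'\!\otimes\! v'\!\otimes\! w'\rangle
 = \langle G, (u-u')\!\otimes\! v \!\otimes\! w\rangle + \langle G, u'\!\otimes\! (v-v') \!\otimes\! w\rangle + \langle G, u'\!\otimes\! v'\!\otimes\! (w-w')\rangle
\]
combined with the definition of $\|G\|$ gives
$\|G\| \le \max_{u',v',w' \in \mathcal{N}_1 \times \mathcal{N}_2 \times \mathcal{N}_3} \langle G, u'\otimes v' \otimes w'\rangle + 3\epsilon \|G\|$, hence $\|G\| \le (1-3\epsilon)^{-1}\, M$ where $M$ is the maximum over the finite net.

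A union bound then controls $M$: with $N = (1+2/\epsilon)^{d_1+d_2+d_3}$ total triples,
\[
\mathbb{P}(M > t) \le 2N \exp(-t^2/2) \le 2\exp\!\left((d_1+d_2+d_3)\log(1+2/\epsilon) - t^2/2\right).
\]
Setting the right side to $\delta$ and solving for $t$ gives, for any $\epsilon<1/3$,
\[
\|G\| \le (1-3\epsilon)^{-1} \sqrt{2(d_1+d_2+d_3)\log(1+2/\epsilon) + 2\log(2/\delta)}
\]
with probability at least $1-\delta$. The final step is to pick $\epsilon$ to match the stated constants. Choosing $\epsilon$ so that $(1-3\epsilon)^{-2}\cdot 2 = 8$ (i.e.\ $\epsilon = 1/6$, giving the factor $4$ squared out front) yields the coefficient $8(d_1+d_2+d_3)$, and the term $\log(6/\log(3/2))$ corresponds to the optimized trade-off between the net granularity and the slackness factor; a slightly sharper version of the covering-number bound or a modified discretization of the sphere (as in Tomioka's original argument) produces this precise constant.

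The main obstacle I anticipate is matching the precise constant $\log(6/\log(3/2))$ rather than the cruder $\log(13)$ one gets from the naive $\epsilon=1/6$ choice; this requires either using a tighter covering-number estimate or performing the optimization of $\epsilon$ more carefully, possibly parameterizing so that the slackness $3\epsilon$ is itself chosen as $\log(3/2)$ times a convenient factor. The conceptual content of the argument, however, is standard: subgaussian concentration at a point plus an $\epsilon$-net union bound, with the approximation error absorbed into a constant multiplicative factor on $\|G\|$.
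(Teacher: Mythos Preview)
Your proposal is correct and is precisely the $\epsilon$-net argument underlying the Tomioka--Suzuki bound that the paper invokes; the paper's own proof is simply to cite that theorem and verify that the entries of $G$ are subgaussian with parameter $1$. One minor remark: with $\epsilon=1/6$ your bound reads $\sqrt{8(d_1+d_2+d_3)\log 13 + 8\log(2/\delta)}$, so the $\log(2/\delta)$ term carries an extra factor of $8$ relative to the stated lemma, while your dimension constant $\log 13$ is actually \emph{smaller} than $\log(6/\log(3/2))\approx\log 14.8$---so the ``obstacle'' you flag is not that your constant is cruder but that the two terms trade off differently; for the paper's application ($\lambda=(d_1+d_2+d_3)^{-1/2}$, needing $\lambda\|G\|\le 1/16$) this discrepancy is irrelevant.
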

    \begin{proof}
        This result is an application of \cite[Thm.\ 1]{Tomioka}. See Appendix \ref{app:tomi} for the details.
    \end{proof}
    \noindent With $\lambda = (d_1+d_2+d_3)^{-\frac{1}{2}}$, Lemma \ref{lem:SubGaussBound} ensures that $\lambda \|G\| \le \frac{1}{16}$ with large probability. To bound the second term, we use an $\epsilon$-net covering argument. Define the following set of ``digitalized'' vectors:
    \begin{equation}
    \mathcal{B}_{m_j, d_j} = \{ 0,\pm 1, \pm 2^{-1/2}, \cdots, \pm 2^{-m_j/2} \}^{d_j} \cap \{ u \in \mathbb{R}^{d_j} : \|u\| \le 1 \}.
    \end{equation}
    Let $\Q$ be the operator $\sum_{k=1}^{\infty} (\PP_{\Omega} \PP_{\X} \PP_{\Omega})^k$. We can bound $\|\Q(G)\|$ by considering the action of $\Q$ on tensor product spaces of the $\mathcal{B}_{m_j, d_j}$. With $m_j = \lceil \log_2( d_j ) \rceil$, the following bound holds by Lemma 9 in \cite{YuanZhang}:
    \begin{equation}
    \label{eq:net}
    \|\Q(G)\| = \max_{u_j \in \mathbb{S}^{d_j}} \left< \Q( G ), u_1 \otimes u_2 \otimes u_3 \right> \le 8 \max_{u_j \in \mathcal{B}_{m_j,d_j}} \left< \Q( G ), u_1 \otimes u_2 \otimes u_3 \right>,
    \end{equation}
    where $\mathbb{S}^{d_j}$ is the unit sphere of dimension $d_j$. Because $Q$ is self-adjoint, this also implies
    \begin{equation}
    \| \Q( G ) \| = \max_{u_j \in \mathbb{\S}^{d_j}} \big< \Q(G), u_1 \otimes u_2 \otimes u_3 \big> \le 8 \max_{u_j \in \mathcal{B}_{m_j,d_j}} \left< G, \Q( u_1 \otimes u_2 \otimes u_3 ) \right>.
    \end{equation}
    Let $X(u,v,w) \defeq \big< G, Q( u_1 \otimes u_2 \otimes u_3 ) \big>$ for convenience. Because the signs of $G$ are i.i.d.\! symmetric, we can apply Hoeffding's inequality, conditional on the event that the support of $G$ is exactly $\Omega$:
    \begin{equation}
    \mathbb{P}( | X(u_1,u_2,u_3) | > t ~ \big| ~ \Omega ) \le 2 \exp\left( -\frac{2 t^2}{\| \Q( u_1 \otimes u_2 \otimes u_3 ) \|_F^2} \right).
    \end{equation}
    Because $\|\Q( u_1 \otimes u_2 \otimes u_3 )\|_F^2 \le \vertiii{ \Q }^2 \cdot \|( u_1 \otimes u_2 \otimes u_3 )\|_F^2 = \vertiii{\Q}^2$,
    \begin{equation}
    \mathbb{P}( | X(u_1,u_2,u_3) | > t ~ \big| ~ \Omega ) \le 2 \exp\left( -\frac{2 t^2}{\vertiii{Q}^2} \right).
    \end{equation}
    Taking the maximum of $| X(u_1,u_2,u_3) |$ with $u_j \in \mathcal{B}_{m_j,d_j}, \ j = 1,2,3$, we have
    \begin{equation}
    \mathbb{P} \left( \max_{u_j \in \mathcal{B}_{m_j,d_j}} | X(u_1,u_2,u_3) | > t ~ \big| ~ \Omega \right) \le 2 \left( \prod_{j = 1,2,3} | \mathcal{B}_{m_j,d_j} | \right) \exp\left( -\frac{2t^2}{\vertiii{\Q}^2} \right).
    \end{equation}
    Applying \eqref{eq:net} to this inequality yields
    \begin{equation}
    \mathbb{P}( \| \Q (G) \| > t ~ \big| ~ \Omega ) \le \left( \prod_{j = 1,2,3} | \mathcal{B}_{m_j,d_j} | \right) \exp\left( -\frac{t^2}{32 \vertiii{ \Q }^2} \right).
    \end{equation}
    We can bound $\vertiii{\Q}$ conditional on the event that $\vertiii{ \PP_{\Omega} \PP_{\X} } \le \sigma$. (See Section \ref{sec: boundingTheOp} for a proof that this event holds with high probability.) Recall $\vertiii{\Q} = \vertiii{ \sum_{k=1}^{\infty} (\PP_{\Omega} \PP_{\X} \PP_{\Omega})^k } \le \sum_{k=1}^{\infty} \vertiii{ ((\PP_{\Omega} \PP_{\X}) (\PP_{\Omega} \PP_{\X})^*)^{k} } \le \frac{\sigma^2}{1-\sigma^2}$. We can also bound the cardinality of $\mathcal{B}_{m_j,d_j}$  using equation (21) of \cite{YuanZhang}:
    \begin{equation}
        \prod_{j = 1,2,3} | \mathcal{B}_{m_j,d_j} | = e^{(21/4) (d_1 + d_2 + d_3)}.
    \end{equation}
    This gives us the unconditional bound
    \begin{align}
    \label{eq:12d}
    \mathbb{P}( & \lambda \| Q (G) \| > t ) \\
    &\le 2 \left( \prod_{j = 1,2,3} | \mathcal{B}_{m_j,d_j} | \right) \exp\left( -\frac{t^2 (1-\sigma^2)^2}{32 \sigma^4 \lambda^2} \right) + \mathbb{P}( \vertiii{ \PP_{\Omega} \PP_{\X} } > \sigma ) \notag \\
    &\le 2 \left( \exp \left(\frac{21 (d_1+d_2+d_3)}{4} \right) \right) \exp\left( -\frac{t^2 (1-\sigma^2)^2}{32 \sigma^4 \lambda^2} \right) + \mathbb{P}( \vertiii{ \PP_{\Omega} \PP_{\X} } > \sigma ) \notag \\
    &= 2 \exp\left( -\frac{t^2 (1-\sigma^2)^2}{32 \sigma^4 \lambda^2} + \frac{21 (d_1+d_2+d_3)}{4} \right) + \mathbb{P}( \vertiii{ \PP_{\Omega} \PP_{\X} } > \sigma ).
    \end{align}
    This shows that if $\lambda$ is chosen to be on the order of $(d_1+d_2+d_3)^{-1/2}$, and if $\sigma$ is a small-enough constant, then $\|\W^\S\| < \frac{1}{8}$ with high probability.

\paragraph{Proof of \eqref{condWS}\textup{(\emph{e})}.}
    
    We would like to show that $\| \PP_{\Omega^{\perp}} \W^\S \|_{\max} < \frac{\lambda}{8}$. By our definition of $\W^\S$,
    \begin{align}
    \W^\S &= \lambda \PP_{\X^{\perp}} (\PP_{\Omega} - \PP_{\Omega} \PP_{\X} \PP_{\Omega})^{-1} G \\
    &= \lambda (I - \PP_{\X}) (\PP_{\Omega} - \PP_{\Omega} \PP_{\X} \PP_{\Omega})^{-1} G \\
    &= - \lambda \PP_{\X} (\PP_{\Omega} - \PP_{\Omega} \PP_{\X} \PP_{\Omega})^{-1} G
    \end{align}
    Choosing $(i,j,k) \in \Omega^\perp$, we have
    \begin{align}
    \W^\S_{i,j,k} &= \left< \W^\S, e_i \otimes e_j \otimes e_k \right> \\
    &= - \lambda \left< \PP_{\X} (\PP_{\Omega} - \PP_{\Omega} \PP_{\X} \PP_{\Omega} )^{-1} G, e_i \otimes e_j \otimes e_k \right> \\
    &= - \lambda \left< \PP_{\X} (\PP_{\Omega} - \PP_{\Omega} \PP_{\X} \PP_{\Omega} )^{-1} G, e_i \otimes e_j \otimes e_k \right> \\
    &= -\lambda \left< G, (\PP_{\Omega} - \PP_{\Omega} \PP_{\X} \PP_{\Omega} )^{-1} \PP_{\Omega} \PP_{\X} (e_i \otimes e_j \otimes e_k) \right>,
    \end{align}
    where we used the fact that $\PP_{\Omega}, \ \PP_{\X}$, and $(\PP_{\Omega} - \PP_{\Omega} \PP_{\X} \PP_{\Omega} )^{-1}$ are self-adjoint. For convenience, let
    \begin{equation}
    R(i,j,k) = (\PP_{\Omega} - \PP_{\Omega} \PP_{\X} \PP_{\Omega} )^{-1} \PP_{\Omega} \PP_{\X} (e_i \otimes e_j \otimes e_k).
    \end{equation}
    Now we bound the maximum entry of $\W^\S$ with high probability conditional on the events that the support of $G$ is exactly $\Omega$ and that $\vertiii{ \PP_{\Omega} \PP_{\X} } \le \sigma$. Because the entries of $G$ are i.i.d.\! symmetric, Hoeffding's inequality gives
    \begin{equation}
    \mathbb{P} \left(\left| \W^\S_{i,j,k} \right| \ge \eta ~\big|~ \Omega \right) \le 2 \exp \left(- \frac{2\eta^2}{\lambda^2 \| R(i,j,k) \|_F^2} \right),
    \end{equation}
    so
    \begin{equation}
    \mathbb{P} \left( \max_{i,j,k} \left| \W^\S_{i,j,k} \right| \ge \eta ~\big|~ \Omega \right) \le 2 d_1 d_2 d_3 \exp \left(- \frac{2\eta^2}{\lambda^2 \max\limits_{i,j,k} \| R(i,j,k) \|_F^2} \right).
    \end{equation}
    All that is left is to bound $\|R(i,j,k)\|_F^2$. We need the following lemma from \cite{YuanZhang}:
    \begin{lemma}[Lem. 2 in \cite{YuanZhang}]
\label{coherLemma}

Let $\X \in \mathbb{R}^{d_1 \times d_2 \times d_3}$ be a third-order tensor. Then
\begin{equation}
\max_{i,j,k} \|\PP_{\X} (e_i \otimes e_j \otimes e_k) \|^2_F \le \frac{\overline{r}(\X)^2 (d_1+d_2+d_3)}{d_1 d_2 d_3} \mu(\X)^2.
\end{equation}
\end{lemma}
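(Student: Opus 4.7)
The plan is to exploit the orthogonal decomposition of $\PP_{\X}$ into four pieces, apply the Pythagorean identity, evaluate each piece explicitly on a rank-one canonical tensor, and then use a cancellation trick to eliminate what would otherwise be an extra factor of $\coherence(\X)$.

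First, the four summands in $\PP_{\X}=\PP_{U,V,W}+\PP_{U^\perp,V,W}+\PP_{U,V^\perp,W}+\PP_{U,V,W^\perp}$ have mutually orthogonal ranges, since $\text{span}(U)\otimes\text{span}(V)\otimes\text{span}(W)$ is orthogonal to, say, $\text{span}(U)^\perp\otimes\text{span}(V)\otimes\text{span}(W)$ under the tensor inner product (the inner product on rank-one tensors factorizes), and similarly for the other pairs. Hence
\[
\|\PP_{\X}(e_i\otimes e_j\otimes e_k)\|_F^2 \;=\; \sum_{\bullet} \|\PP_\bullet(e_i\otimes e_j\otimes e_k)\|_F^2,
\]
with the sum running over the four projections above. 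Each summand factorizes on a rank-one input: for example, $\PP_{U,V,W}(e_i\otimes e_j\otimes e_k)=(\PP_U e_i)\otimes(\PP_V e_j)\otimes(\PP_W e_k)$ has squared Frobenius norm $\|\PP_U e_i\|^2\|\PP_V e_j\|^2\|\PP_W e_k\|^2$, and the other three expressions are obtained by toggling one factor from $\PP_{(\cdot)}$ to $\PP_{(\cdot)^\perp}$.

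The key trick is to combine the first two terms using the Pythagorean identity $\|\PP_U e_i\|^2+\|\PP_{U^\perp}e_i\|^2=1$, which collapses their sum to $\|\PP_V e_j\|^2\|\PP_W e_k\|^2$. Without this step, the leading $(U,V,W)$-term would contribute a factor $\coherence(\X)^3$, one power too many for the target. The total bound becomes
\[
\|\PP_V e_j\|^2\|\PP_W e_k\|^2 \;+\; \|\PP_U e_i\|^2\|\PP_{V^\perp}e_j\|^2\|\PP_W e_k\|^2 \;+\; \|\PP_U e_i\|^2\|\PP_V e_j\|^2\|\PP_{W^\perp}e_k\|^2.
\]

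Finally, the definition of subspace coherence gives $\|\PP_U e_i\|^2\le \mu(U)r_1/d_1\le \coherence(\X)r_1/d_1$, and analogously for $V$ and $W$; the orthogonal projections are trivially bounded, $\|\PP_{V^\perp}e_j\|^2,\|\PP_{W^\perp}e_k\|^2\le 1$. The three summands above are then at most $\coherence(\X)^2 r_2 r_3/(d_2 d_3)$, $\coherence(\X)^2 r_1 r_3/(d_1 d_3)$, and $\coherence(\X)^2 r_1 r_2/(d_1 d_2)$. Adding them and factoring out $1/(d_1 d_2 d_3)$ yields $\coherence(\X)^2(d_1 r_2 r_3+d_2 r_1 r_3+d_3 r_1 r_2)/(d_1 d_2 d_3)$, which by the definition of $\overline{r}(\X)$ equals $\coherence(\X)^2 \overline{r}(\X)^2(d_1+d_2+d_3)/(d_1 d_2 d_3)$; since the bound is uniform in $(i,j,k)$, taking the maximum is free. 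The only real obstacle is spotting the Pythagorean collapse of the $(U,V,W)$ and $(U^\perp,V,W)$ terms at the outset; a naive estimate misses the target by a factor of $\coherence(\X)$.
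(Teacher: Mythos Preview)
Your proof is correct. Note, however, that the paper does not supply its own proof of this lemma: it is quoted verbatim as Lemma~2 of \cite{YuanZhang} and used as a black box in the proof of \eqref{condWS}(e). Your argument---orthogonal decomposition of $\PP_\X$ into four pieces, Pythagorean identity, the collapse $\|\PP_U e_i\|^2+\|\PP_{U^\perp}e_i\|^2=1$ on two of the four terms, and then the coherence bounds---is exactly the proof given in \cite{YuanZhang}, so there is nothing to contrast.
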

By Lemma \ref{coherLemma},
    \begin{equation}
    \| \PP_{\X} (e_i \otimes e_j \otimes e_k) \|_F \le \coherence_0 \overline{r} \left( \frac{d_1+d_2+d_3}{d_1d_2d_3}\right)^{1/2}.
    \end{equation}
    Hence,
    \begin{align}
    \| \PP_{\Omega} \PP_{\X} (e_i \otimes e_j \otimes e_k) \|_F &\le (\vertiii{\PP_{\Omega} \PP_{\X} } ) ( \|\PP_{\X} (e_i \otimes e_j \otimes e_k)\|_F) \\
    &\le \sigma \coherence_0 \overline{r} \left( \frac{d_1+d_2+d_3}{d_1d_2d_3}\right)^{1/2}
    \end{align}
    Furthermore,
    \begin{align}
    \| (\PP_{\Omega} - \PP_{\Omega} \PP_{\X} \PP_{\Omega} )^{-1} \| &= \| (\PP_{\Omega} - (\PP_{\Omega} \PP_{\X}) (\PP_{\Omega} \PP_{\X})^*  )^{-1} \| \\
    &\le (1-\sigma^2)^{-1}.
    \end{align}
    Combining these two bounds, we have
    \begin{align}
    \| R(i,j,k) \|_F^2 &= \| (\PP_{\Omega} - \PP_{\Omega} \PP_{\X} \PP_{\Omega} )^{-1} \PP_{\Omega} \PP_{\X} (e_i \otimes e_j \otimes e_k) \|_F^2 \\
    &\le \left( \frac{\sigma \coherence_0 \overline{r}}{1-\sigma^2} \right)^2 \left( \frac{d_1+d_2+d_3}{d_1d_2d_3}\right).
    \end{align}
    Finally, we have derived that
    \begin{align}
    \label{eq:Firste}
    \mathbb{P} \left( \max_{i,j,k} \left| \W^\S_{i,j,k} \right| \ge \eta \right) \le & 2 d_1 d_2 d_3 \exp \left(- \frac{2\eta^2(1-\sigma^2)^2 d_1 d_2 d_3}{(\sigma \lambda \coherence_0 \overline{r})^2 (d_1 + d_2 + d_3)} \right) \notag \\
    & \quad + \mathbb{P}\left( \vertiii{ \PP_{\Omega} \PP_{\X} } > \sigma \right).
    \end{align}

    Letting $\eta = \frac{\lambda}{8}$, this shows that if $\overline{r} \le \rho_r \left( \frac{n}{(d_1 + d_2 + d_3)\log(n) \alpha_0^4 \coherence_0^2 } \right)^{1/2}$, then $\|\PP_{\Omega^{\perp}} \W^\S\|_{\max} < \frac{\lambda}{8}$ with high probability, provided that $\rho_r$ is a small-enough constant.

    \subsection{Bounding the Operator Norm of $\PP_{\Omega} \PP_{\X}$}
    \label{sec: boundingTheOp}
    
    The past two results were conditional on the event that $\vertiii{ \PP_{\Omega} \PP_{\X} } \le \sigma$ with high probability. Lemma 5 from \cite{YuanZhang} shows this is true.
    
    \begin{lemma}[Lem. 5 in \cite{YuanZhang}]
        \label{lem:opBound}

        Let $\coherence(\X) \le \coherence_0$, $\overline{r}(\X) = r$, and $\Omega$ follow the Bernoulli model of equation \eqref{eq:berdist}. Then, for any $\tau > 0$
        \begin{align}
        \mathbb{P}\Big( &\vertiii{ \PP_{\X} (\tfrac{d_1d_2d_3}{n} \PP_{\Omega} - I) \PP_{\X} } \ge \sigma \Big) \\
        & \le 2r^2 (d_1+d_2+d_3) \exp\left(- \frac{n \sigma^2/2}{(1+2\sigma/3) \coherence_0^2 r^2 (d_1+d_2+d_3)} \right).
        \end{align}
    \end{lemma}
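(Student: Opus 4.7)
}

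The plan is to write the operator $\PP_{\X} \bigl(\tfrac{d_1d_2d_3}{n}\PP_{\Omega} - I\bigr)\PP_{\X}$ as a sum of independent, centered, self-adjoint random operators indexed by the coordinates $(i,j,k) \in [d_1]\times[d_2]\times[d_3]$, and then apply an operator Bernstein inequality. Concretely, under the Bernoulli model \eqref{eq:berdist} we can write $\PP_{\Omega} = \sum_{i,j,k} \delta_{ijk}\,\mathcal{E}_{ijk}$, where $\delta_{ijk}\sim\text{Bernoulli}(\rho)$ with $\rho=n/(d_1d_2d_3)$ and $\mathcal{E}_{ijk}$ is rank-one projection onto the line spanned by $e_i\otimes e_j\otimes e_k$. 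Setting $\rho = n/(d_1d_2d_3)$ we have
\begin{equation}
\PP_{\X}\Bigl(\tfrac{d_1d_2d_3}{n}\PP_{\Omega} - I\Bigr)\PP_{\X}
= \sum_{i,j,k} \bigl(\rho^{-1}\delta_{ijk} - 1\bigr)\,\PP_{\X}\mathcal{E}_{ijk}\PP_{\X},
\end{equation}
and each summand is self-adjoint with zero mean.

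Next, I would estimate the two quantities that drive Bernstein: a uniform operator-norm bound $M$ on each term and a variance proxy $V$. Using Lemma \ref{coherLemma} together with $\coherence(\X)\le \coherence_0$ and $\overline{r}(\X)=r$, each rank-one piece satisfies
\[
\bigl\|\PP_{\X}\mathcal{E}_{ijk}\PP_{\X}\bigr\|
= \bigl\|\PP_{\X}(e_i\otimes e_j\otimes e_k)\bigr\|_F^2
\le \coherence_0^2\,r^2\,\frac{d_1+d_2+d_3}{d_1d_2d_3},
\]
and since $|\rho^{-1}\delta_{ijk}-1|\le \rho^{-1}=d_1d_2d_3/n$, I obtain the almost-sure bound
\[
M \;\lesssim\; \frac{\coherence_0^2 r^2 (d_1+d_2+d_3)}{n}.
\]
For the variance, using that $(\PP_{\X}\mathcal{E}_{ijk}\PP_{\X})^2 = \|\PP_{\X}(e_i\otimes e_j\otimes e_k)\|_F^2\,\PP_{\X}\mathcal{E}_{ijk}\PP_{\X}$ and $\sum_{ijk}\mathcal{E}_{ijk}=I$,
\[
\sum_{i,j,k}\mathbb{E}\bigl[(\rho^{-1}\delta_{ijk}-1)^2\bigr]\,\bigl(\PP_{\X}\mathcal{E}_{ijk}\PP_{\X}\bigr)^2
\preceq \frac{1-\rho}{\rho}\cdot \frac{\coherence_0^2 r^2 (d_1+d_2+d_3)}{d_1d_2d_3}\,\PP_{\X},
\]
which gives $V\lesssim \coherence_0^2 r^2(d_1+d_2+d_3)/n$ on taking operator norms.

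Finally, I would invoke the noncommutative Bernstein inequality (e.g.\ Tropp's matrix Bernstein applied to operators on the ambient space $\mathbb{R}^{d_1\times d_2\times d_3}$, restricted to the range of $\PP_{\X}$). The crucial dimension factor equals $\rank(\PP_\X)$, which is at most $r_1r_2d_3+r_1r_3d_2+r_2r_3d_1 = \overline{r}^2(d_1+d_2+d_3) \le r^2(d_1+d_2+d_3)$. Plugging $M$, $V$, and this dimension into the Bernstein bound
\[
\mathbb{P}\bigl(\bigl\|\textstyle\sum X_{ijk}\bigr\|\ge\sigma\bigr) \le 2\,\dim \cdot \exp\!\Bigl(-\tfrac{\sigma^2/2}{V + M\sigma/3}\Bigr)
\]
yields exactly the claimed bound. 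The main obstacle is the correct identification of the effective dimension: the ambient tensor space has dimension $d_1d_2d_3$, which would give a too-weak bound, so one must argue that $\PP_\X$ projects onto a subspace of dimension at most $\overline{r}^2(d_1+d_2+d_3)$ and restrict the Bernstein inequality accordingly. Everything else is bookkeeping of constants and the coherence estimate from Lemma \ref{coherLemma}.
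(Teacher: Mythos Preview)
The paper does not prove this lemma; it is quoted verbatim as Lemma~5 of \cite{YuanZhang} and used as a black box in Section~\ref{sec: boundingTheOp}. Your proposal is exactly the standard matrix/operator Bernstein argument that underlies the Yuan--Zhang proof: decompose $\PP_\X(\rho^{-1}\PP_\Omega-I)\PP_\X$ as a sum of independent, centered, rank-one self-adjoint operators, bound the uniform term $M$ and the variance proxy $V$ via the coherence estimate of Lemma~\ref{coherLemma}, and read off the dimension factor as $\rank(\PP_\X)\le \overline{r}^2(d_1+d_2+d_3)$. All the key computations you sketch (the rank-one identity $\|\PP_\X\mathcal{E}_{ijk}\PP_\X\|=\|\PP_\X(e_i\otimes e_j\otimes e_k)\|_F^2$, the variance $(1-\rho)/\rho$, and the dimension count for $\PP_\X$) are correct, and your handling of the effective dimension---restricting to the range of $\PP_\X$ rather than the ambient $d_1d_2d_3$-dimensional space---is precisely the point that makes the prefactor $2r^2(d_1+d_2+d_3)$ rather than $2d_1d_2d_3$. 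The only cosmetic discrepancy is the constant $(1+2\sigma/3)$ versus the $(1+\sigma/3)$ that falls out of your $V+M\sigma/3$; this depends on which form of Bernstein one quotes and is immaterial.
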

    
    By our assumption in \eqref{eq:mainResult}, $r \le \mathcal{O} \left( \left(\frac{n}{(d_1+d_2+d_3) \log(n) \alpha_0^4 \mu_0^2} \right)^{\frac{1}{2}} \right)$, so as long as $n$ is not too large (or, equivalently, $\rho_s$ is sufficiently small) Lemma \ref{lem:opBound} shows that as an operator in the range of $\PP_{\X}$, $\tfrac{d_1 d_2 d_3}{n} \vertiii{\PP_{\X} \PP_{\Omega} \PP_{\X} } \in [1/2,3/2]$. Using the fact that $\vertiii{ \PP_{\Omega} \PP_{\X} }^2 = \vertiii{ (\PP_{\Omega} \PP_{\X})^* (\PP_{\Omega} \PP_{\X}) } = \vertiii{\PP_{\X} \PP_{\Omega} \PP_{\X} }$, we have also bounded $\vertiii{ \PP_{\Omega} \PP_{\X} }$ with high probability.
    
    Lemma \ref{lem:opBound} assumes the support of $\Omega$ is uniformly distributed over all sets of cardinality $m$, but in Section \ref{sec:WS}, we assume that $\Omega$ follows the Bernoulli model. In Appendix \ref{app:berunif}, we show that these two models are essentially equivalent, so the conclusion of Lemma \ref{lem:opBound} holds for the Bernoulli model as well.

\section{A Nonconvex Approach to Atomic Norm Minimization}
\label{sec:NonCon}

Although Theorem \ref{main} shows that the program \eqref{RPCA} can exactly recover a low Tucker-rank tensor and a sparse tensor from their superposition, \eqref{RPCA} is NP-hard to solve in general due to the intractability of the atomic norm \cite{NPtensors}. For low-rank matrix recovery, it is common to accelerate computation by replacing the nuclear norm with a nonconvex formulation based on a factorization of $X$ \cite{CandesRecht2009,fazel2001rank,MMMF2005},
\begin{equation}
\label{nonconnuc}
\|X\|_* = \inf_{UV^T = X} \quad \tfrac{1}{2} (\|U\|_F^2 + \|V\|_F^2).
\end{equation}
A similar factorized formulation of the tensor atomic norm has been used for order-three tensors in \cite{Bazerque} and \cite{MomentTensorAtom} for tensor completion. Due to the non-smooth regularizer in the tensor RPCA problem, this factorization approach must be handled with care. We introduce our factorized program in section \ref{subsec:burmontfact} and discuss a particularly fast method for handling the non-smooth regularizer in section \ref{sec:firstOrd}. We conclude this section by showing that many local minima of our factorized program are globally optimal.

\subsection{Burer-Monteiro Factorization in Higher-Orders}
\label{subsec:burmontfact}

We now explicitly work with tensors of order-$K$, with $K \ge 3$. Instead of establishing equivalence to the program \eqref{RPCA} directly, we work with its Lagrangian formulation:
\begin{align}
\label{RPCAlag}
\min_{\X, \S} \quad &\frac{1}{2} \| \X + \S - \Z \|_F^2 + \lambdaX \| \X \|_* + \lambdaS \| \S \|_{\textnormal{sum}}. \end{align}
A solution $(\X^*,\S^*)$ to \eqref{RPCAlag} solves a variant of \eqref{RPCA} where the equality constraint is replaced with $\| \X + \S - \Z \|_F \le \epsilon$ with $\epsilon \defeq \| \X^* + \S^* - \Z \|_F$ and $\lambda = \lambdaS/\lambdaX$.

Our factorized approach implicitly introduces a bound on the rank of $\X$, yielding the constrained problem
\begin{equation}
\label{RPCAlagCon}
\min_{\X, \S} \quad \frac{1}{2} \| \X + \S - \Z \|_F^2 + \lambdaX \| \X \|_* + \lambdaS \| \S \|_{\textnormal{sum}} \quad \textnormal{s.t.} \quad \textnormal{rank}_{\textnormal{atomic}}(\X) \le R.
\end{equation}
Although \eqref{RPCAlagCon} is nonconvex, it has the same global optima as the convex program \eqref{RPCAlag} as long as the rank-bound $R$ is non-restrictive at the solution \cite{GLRM}. 
We can explicitly parameterize the nonconvex model \eqref{RPCAlagCon} as 
\begin{align}
\label{noncon1}
\min_{\{ a_r^{(1)} \}, \cdots, \{ a_r^{(K)} \},\S} \quad \frac{1}{2} \Bigg\| \sum_{r=1}^R (a_r^{(1)} \otimes \cdots \otimes a_r^{(K)}) & + \S - \Z \Bigg\|_F^2 \notag \\
& + \lambdaS \| \S \|_{\textnormal{sum}} + \frac{\lambdaX}{K} \sum_{r=1}^R \sum_{k = 1}^K \|a_r^{(k)}\|^K.
\end{align}
The following proposition shows that this nonconvex program is equivalent to \eqref{RPCAlag} as long as the induced rank-bound is non-restrictive at the solution.
\begin{proposition}
\label{prog2}
Suppose $\{ a_r^{* (1)} , \cdots,  a_r^{*(K)} \}_{r=1,\ldots,R}, S^*$ 
are optimal for the nonconvex program \eqref{noncon1}, and define $\X^* \defeq \sum_{r=1}^R (a_r^{* (1)} \otimes \cdots \otimes a_r^{* (K)})$. 
Then the point $(\X^*,S^*)$ is optimal for the problem \eqref{RPCAlag}. Conversely, if $(\X^*,\S^*)$ is the minimizer of \eqref{RPCAlag}, then the terms 
  $\{ a_r^{* (1)} \}, \cdots, \{ a_r^{*(K)} \}$
   from an atomic decomposition of $\X^*$ are optimal for \eqref{noncon1}.
  
\end{proposition}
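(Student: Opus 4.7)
The plan is to reduce Proposition~\ref{prog2} to a variational characterization of the tensor atomic norm that generalizes the matrix identity \eqref{nonconnuc} to order $K$:
\[
\|\X\|_* = \inf\left\{ \tfrac{1}{K}\sum_{r=1}^R \sum_{k=1}^K \|a_r^{(k)}\|^K \;:\; \X = \sum_{r=1}^R a_r^{(1)} \otimes \cdots \otimes a_r^{(K)} \right\},
\]
valid whenever $R$ is at least the atomic rank of $\X$. Granted this identity, the nonconvex program \eqref{noncon1} is simply the convex program \eqref{RPCAlag} with $\X$ reparameterized by its factors, so the two problems share optimal values and optimal solutions correspond through $\X^* = \sum_r \bigotimes_k a_r^{*(k)}$. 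The matrix case uses the SVD; here AM--GM plays the analogous balancing role.

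First I would establish the upper bound. Given any decomposition $\X = \sum_r a_r^{(1)} \otimes \cdots \otimes a_r^{(K)}$, absorb norms into scalars to write $\X = \sum_r \gamma_r\, u_r^{(1)} \otimes \cdots \otimes u_r^{(K)}$ with $\|u_r^{(k)}\|=1$ and $\gamma_r = \prod_k \|a_r^{(k)}\| \ge 0$. Then
\[
\|\X\|_* \le \sum_r \gamma_r = \sum_r \prod_{k=1}^K \|a_r^{(k)}\| \le \frac{1}{K}\sum_r \sum_{k=1}^K \|a_r^{(k)}\|^K,
\]
where the first step is the definition of $\|\cdot\|_*$ and the second is AM--GM, with equality precisely when $\|a_r^{(1)}\| = \cdots = \|a_r^{(K)}\|$ for each $r$. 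For the matching lower bound, take an atomic decomposition $\X = \sum_{r=1}^{R_0} \gamma_r (u_r^{(1)} \otimes \cdots \otimes u_r^{(K)})$ with $\|\X\|_* = \sum_r \gamma_r$ (existence is guaranteed by \cite{Friedland}), rescale to $a_r^{(k)} = \gamma_r^{1/K} u_r^{(k)}$ so all factor norms are balanced, and pad with zero atoms if $R > R_0$; the chain above collapses to equality.

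With the variational formula in hand, both directions of the proposition are short. Writing $v_n$ for the optimal value of \eqref{noncon1} and $v_c$ for that of \eqref{RPCAlag}, the upper-bound inequality applied to any nonconvex-feasible tuple shows $v_c \le v_n$; conversely, a balanced atomic decomposition of any convex-optimal $(\X^*,\S^*)$ lifts---provided the atomic rank of $\X^*$ is at most $R$---to a nonconvex-feasible tuple with objective $v_c$, giving $v_n \le v_c$. Hence $v_c = v_n$. For the forward direction of the proposition, if $\{a_r^{*(k)}\},\S^*$ achieves $v_n$ and $\X^*=\sum_r \bigotimes_k a_r^{*(k)}$, then $(\X^*,\S^*)$ attains value at most $v_n = v_c$ in \eqref{RPCAlag} and is therefore optimal; the converse direction is the lifting step just described.

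The main point to watch is the role of the rank bound $R$: if $R$ were strictly less than the atomic rank of the convex-optimal $\X^*$, the two programs would genuinely disagree. The proposition relies on the standing assumption stated immediately before it that $R$ is non-restrictive at the convex optimum. A minor additional subtlety is that atomic decompositions are non-unique, so the converse direction produces \emph{some} optimal tuple rather than a canonical one, matching the phrasing ``the terms from an atomic decomposition'' in the statement.
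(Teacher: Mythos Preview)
Your proposal is correct and follows essentially the same approach as the paper: both arguments hinge on the AM--GM inequality to show that the factored regularizer $\tfrac{1}{K}\sum_r\sum_k \|a_r^{(k)}\|^K$ coincides with $\|\X\|_*$ once factors are optimally balanced. The only organizational difference is that the paper routes the equivalence through an intermediate program (Proposition~\ref{prog1}, with an explicit $\|\gamma\|_1$ regularizer and unit-norm constraints on the factors) before applying AM--GM, whereas you isolate the variational identity for $\|\cdot\|_*$ as a standalone lemma and substitute directly; your packaging is arguably cleaner, but the mathematical content is the same.
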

The authors of \cite{Bazerque} and \cite{MomentTensorAtom} prove results similar to Proposition \ref{prog2} in the case $K = 3$, and we include a proof in Appendix~\ref{sec:proofBurerMonteiro} for completeness. 

The factorized regularizer in \eqref{noncon1} can be viewed as a higher-order generalization of Burer-Monteiro factorization, which is popular in low-rank matrix recovery \cite{CandesRecht2009,fazel2001rank,MMMF2005,GLRM}. Notice for $K = 2$, this term reduces to $\frac{\lambdaX}{2} (\|A\|_F^2 + \|B\|_F^2)$, which is equivalent to standard nuclear-norm regularization (as shown in \eqref{nonconnuc}). The next section discusses how we can efficiently find a stationary point of \eqref{noncon1}.

\subsection{First-Order Solvers}
\label{sec:firstOrd}
\newcommand{\allA}{\mathbf{a}}

One of the drawbacks of \eqref{noncon1} is that the non-smooth $\ell_1$-regularizer $ \|S\|_{\textnormal{sum}}$  prevents the use of pure gradient-based solvers. 
One can use proximal gradient methods but these can still be slow. 
However, as suggested in \cite{Us1}, the structure of the program allows us to smooth the problem through marginalization. Define
\begin{equation}
\label{phi}
\varphi : \X \mapsto \min_S \frac{1}{2} \|\X + S - \Z\|_F^2 + \lambdaS \|S\|_{\textnormal{sum}}.
\end{equation}
Because we are using the least-squares loss and $\ell_1$-regularization, $\varphi$ is the (shifted) Moreau envelope of the $\ell_1$-norm, also known as the Huber loss \cite{LevelSet}. The objective in \eqref{phi} is strongly convex, so the minimum exists and is unique. In fact, it can be written in closed-form using the shrinkage operator:
\begin{equation*}
(\textnormal{shrink}(Y,\lambda))_{i_1,\cdots,i_K} \defeq \textnormal{sign}(Y_{i_1,\cdots,i_K}) \lfloor \,|Y_{i_1,\cdots,i_K}| - \lambdaS | \,\rfloor_+ ,
\end{equation*}
(where $\lfloor a\rfloor_+$ denotes the non-negative part of $a$), so that
\begin{equation}
\argmin_S \frac{1}{2} \|\X + S - \Z\|_F^2 + \lambdaS \|S\|_{\textnormal{sum}} = \textnormal{shrink}(\Z-\X,\lambdaS).
\end{equation}
Incorporating $\varphi$ into the convex program \eqref{RPCA} preserves convexity and introduces differentiability. Combining $\varphi$ with the nonconvex program \eqref{noncon1} yields a tractable, Lipschitz-differentiable problem that is amenable to first-order solvers. Formally,
letting $\allA = ( a_r^{(k)} )_{r=1,\ldots,R}^{k=1,\ldots,K}$, 
we solve the program
\newcommand{\vecToTens}{\mathcal{A}}
\begin{equation}
\label{marg}
\min_{\allA}
\quad \frac{\lambdaX}{K} \sum_{r=1}^R \sum_{k = 1}^K \| a_r^{(k)} \|^K + 
\varphi \left( \vecToTens(\allA) \right),
\end{equation}
where
\begin{equation}
\vecToTens : \allA \mapsto \sum_{r=1}^R (a_r^{(1)} \otimes \cdots \otimes a_r^{(K)})
\end{equation}
Let $f(\allA)$ be the objective in \eqref{marg}, and let $S^{\allA}$ be the minimizer in \eqref{phi} (which implicitly depends on $\allA$). Then $f$ is differentiable with gradient given by 
\begin{align*}
\nabla_{a_r^{(k)}} f &= (\lambdaX \|a_r^{(k)}\|^{K-2} ) a_r^{(k)} + \nabla_{a_r^{(k)}} 
\psi( \allA ) \Big|_{S^{\allA}} \\
&= (\lambdaX \|a_r^{(k)}\|^{K-2} ) a_r^{(k)} + \left( a_r^{(k)} C^T + S^{\allA} - \Z \right) C,
\end{align*}
where $C = \left( A^{(K)} \odot \cdots \odot A^{(k+1)} \odot A^{(k-1)} \odot \cdots \odot A^{(1)} \right)$. We direct the reader to \cite[Thm.\ 10.58]{VariationalAnalysis} for proof of the first equality and to \cite{KoldaGrad} for a derivation of the second.

\subsection{Global Optimality of Certain Local Minima}

Although it is possible to find a stationary point of \eqref{noncon1} efficiently, it is not obvious that this stationary point approximates the global optimizer when $K \ge 3$. This contrasts with the case $K = 2$, where several works have shown that the local optima of \eqref{noncon1} or similar models are all globally optimal, and non-optimal stationary points are avoidable.

The case $K \ge 3$ has received considerably less attention. However, we can use a recent result of \cite{HaeffeleVidal15} to show that certain local minima of \eqref{noncon1} are globally optimal. The following proposition is a special case of \cite[Thm.\  15]{HaeffeleVidal15}.

\begin{proposition}[\cite{HaeffeleVidal15}]
\label{prop:loc2glob}
Let $\ell(\X,\S)$ be once differentiable and jointly convex in $\X$ and $\S$, and let $R(\S)$ be convex but not necessarily differentiable. Any local minimizer of the optimization problem
\begin{equation}
    \min_{\{ a_r^{(1)} \}, \cdots, \{ a_r^{(K)} \},\S} \quad \ell \left( \sum_{r=1}^R (a_r^{(1)} \otimes \cdots \otimes a_r^{(K)}), \S \right) + R( \S ) + \sum_{r=1}^R \sum_{k = 1}^K \|a_r^{(k)}\|^K.
\end{equation}
such that $a_{r_0}^{(k_0)} = 0$ for some $r \in \{ 1,\cdots,R \}$ and $k \in \{1,\cdots,K\}$ is a global minimizer.
\end{proposition}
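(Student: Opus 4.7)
The strategy is to show that the first-order conditions at such a local minimum force the KKT conditions for the convex ``envelope''
\[
\min_{\X, \S} \; \ell(\X, \S) + R(\S) + K\|\X\|_*
\]
at the induced point $(\X^*, \S^*)$, where $\X^* := \sum_{r} a_r^{(1)} \otimes \cdots \otimes a_r^{(K)}$; global optimality then follows from convexity. The link between the two problems is AM--GM: $\sum_{k=1}^K \|a_r^{(k)}\|^K \ge K \prod_{k=1}^K \|a_r^{(k)}\| = K\|a_r^{(1)} \otimes \cdots \otimes a_r^{(K)}\|_*$, with equality iff the factor norms within each atom are equal. Summing over $r$ shows that the nonconvex objective pointwise majorizes the envelope for every factorization, so if we can certify $(\X^*, \S^*)$ as globally optimal for the envelope and show that the nonconvex value at our local min matches, we are done.

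\textbf{Step 1 (spectral bound via the spare atom).} At a local min with $a_{r_0}^{(k_0)} = 0$, one may assume $a_{r_0}^{(k)} = 0$ for every $k$: any other $a_{r_0}^{(k)}$ contributes nothing to $\X^*$ (the $r_0$th atom is already zero) but strictly increases the regularizer, contradicting local minimality. For arbitrary unit vectors $u^{(1)},\ldots,u^{(K)}$ and small $t \ge 0$, the perturbation $a_{r_0}^{(k)} \leftarrow t\, u^{(k)}$ changes $\X^*$ by $t^K u^{(1)} \otimes \cdots \otimes u^{(K)}$ and the regularizer by $K t^K$, so the objective changes by
\[
t^K \bigl[\, \langle \nabla_\X \ell(\X^*, \S^*),\, u^{(1)} \otimes \cdots \otimes u^{(K)} \rangle + K \,\bigr] + O(t^{2K}).
\]
Since $u^{(1)}$ can be replaced by $-u^{(1)}$, local minimality forces the bracket to have absolute value at most $K$ on every unit atom, giving $\|\nabla_\X \ell(\X^*, \S^*)\| \le K$ in the spectral norm.

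\textbf{Step 2 (alignment via the remaining atoms).} For each $r$ and each $k$ with $a_r^{(k)} \ne 0$, vanishing gradient in $a_r^{(k)}$, paired with $a_r^{(k)}$, yields
\[
K\,\|a_r^{(k)}\|^K + \bigl\langle \nabla_\X \ell(\X^*, \S^*),\, a_r^{(1)} \otimes \cdots \otimes a_r^{(K)} \bigr\rangle = 0, \quad k = 1,\ldots,K.
\]
Independence of the second term from $k$ forces $\|a_r^{(1)}\| = \cdots = \|a_r^{(K)}\|$ (the AM--GM equality case), so every atom is balanced. Setting $\gamma_r := \prod_k \|a_r^{(k)}\|$, multiplying by $\gamma_r$, and summing gives $\langle -\nabla_\X \ell, \X^*\rangle = K \sum_r \gamma_r$. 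The Step-1 bound combined with atomic/spectral duality yields $\sum_r \gamma_r \le \|\X^*\|_*$; the reverse inequality holds because $\sum_r \gamma_r$ is the cost of an admissible atomic decomposition. Hence $\sum_r \gamma_r = \|\X^*\|_*$, the factorization is itself atomic, and $-\nabla_\X \ell(\X^*, \S^*) \in \partial(K\|\X^*\|_*)$. The parallel subdifferential condition $-\nabla_\S \ell(\X^*, \S^*) \in \partial R(\S^*)$ follows from first-order optimality in the $\S$-coordinate of the original objective, which is convex in $\S$. Together these are the KKT conditions for the convex envelope, so $(\X^*, \S^*)$ is globally optimal there; since the envelope pointwise lower bounds the nonconvex objective and the two values agree at this atomic, balanced factorization, the local min is globally optimal.

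\textbf{Main obstacle.} The delicate point is interlocking Steps 1 and 2: the critical-point conditions on the existing atoms alone yield only $\langle -\nabla_\X \ell, \X^*\rangle = K\sum_r \gamma_r$, whereas one needs $\sum_r \gamma_r = \|\X^*\|_*$ for the nonconvex and convex values to coincide. Promoting the former to the latter requires the spectral bound $\|\nabla_\X \ell\| \le K$, which is supplied precisely by the spare zero atom guaranteed by the hypothesis $a_{r_0}^{(k_0)} = 0$. Without that hypothesis, the Step-1 perturbation would have to compete against an existing rank-one summand, and the resulting first-order test would no longer range over every unit atom, so the subgradient condition would fail at generic critical points.
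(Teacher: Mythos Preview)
The paper does not prove this proposition: it is stated as a special case of \cite[Thm.~15]{HaeffeleVidal15} and used without argument. Your proposal therefore supplies a proof where the paper offers only a citation.

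Your argument is correct and is, in outline, the standard Haeffele--Vidal route: the zero atom furnishes a free direction in which to test every rank-one perturbation, yielding the spectral bound $\|\nabla_\X\ell(\X^*,\S^*)\|\le K$; the stationarity equations on the nonzero atoms force the factor norms within each atom to coincide (AM--GM equality) and give $\langle -\nabla_\X\ell,\X^*\rangle = K\sum_r\gamma_r$; combining these with atomic/spectral duality pins $\sum_r\gamma_r=\|\X^*\|_*$ and certifies $-\nabla_\X\ell\in\partial(K\|\cdot\|_*)(\X^*)$. Together with the $\S$-block optimality this is the KKT system of the convex envelope $\ell(\X,\S)+R(\S)+K\|\X\|_*$, and the AM--GM lower bound closes the loop. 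One cosmetic remark: since $\ell$ is only assumed once differentiable, the remainder in Step~1 is $o(t^K)$ rather than $O(t^{2K})$; this does not affect the conclusion, as you only need the leading coefficient to be nonnegative. You might also make explicit that any atom with a single vanishing factor must, by the same local-minimality argument you used for $r_0$, have all factors zero, so the dichotomy ``all-zero / all-nonzero'' holds for every $r$ and Step~2 applies cleanly to the latter.
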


Clearly, Proposition \ref{prop:loc2glob} applies to the model in \eqref{noncon1}. The condition that one of the factors $a_{r_0}^{k_0}$ is equal to zero is a natural condition for global optimality because it implies that the rank-bound the factorization induces is not too strict. Proposition \ref{prop:loc2glob} suggests that if $R$ is large enough, then the stationary points of \eqref{noncon1} are good approximations of the global optimizers. This idea is discussed formally in \cite{HaeffeleVidal15}, and we direct the interested reader there for a discussion of descent approaches that rigorously converge to global optimizers.

\section{Tensor RPCA for Topic Modeling}
\label{sec:topmod}

In \cite{AnandHsu}, the authors demonstrate that training many latent variable models can be reduced to a tensor decomposition problem. Current methods for obtaining this decomposition are generally based on the Tensor Power Method described in \cite{AnandHsu}, and they all require the tensor being decomposed to have orthogonal factor matrices \cite{sketchTensor,RongTensor,AnandHsu}. To meet this requirement, the underlying distributions must be linearly independent, and a numerically unstable whitening procedure must be implemented before the tensor decomposition takes place.

Tensor RPCA avoids these problems because it does not assume the factor matrices are orthogonal. Furthermore, it is also robust to sparsely distributed errors in the sample moment-tensor, which often arise from systematic errors in the sample moments \cite{AnandRPCA}. In the following subsections, we outline how tensor RPCA can be used for topic modeling, specifically, for parameter estimation in Latent Dirichlet Allocation (LDA). Central to this approach is how Theorem \ref{main} interpreted in this context provides provable performance guarantees, stating that as long as the number of errors in the sample moments is small, and the number of topics is small with respect to the vocabulary size, then tensor RPCA perfectly recovers the topic distributions with high probability.

\subsection{Problem Setting and Existing Approaches}
\label{sec:topic}

In the LDA model, every document is a mixture of topics, and these mixtures follow a Dirichlet distribution Dir$(\beta)$. Here, $\beta \in \R^K_{++}$ is a vector of $K$ model parameters, where $K$ is the number of topics. The probability density of this distribution over the simplex $\Delta^{K-1}$ is given by \cite{LDA}
\begin{equation}
p_{\beta}(h) = \frac{\Gamma(\beta_0)}{\prod_{i=1}^K \Gamma(\beta_i)} \prod_{i=1}^K h_i^{\beta_i - 1}, \quad h \in \Delta^{K-1}, \quad \beta_0 := \sum_{i=1}^K \beta_i.
\end{equation}

To form a document under this model, we first draw a topic mixture $h = (h_1,\cdots,h_K) \sim$ Dir$(\beta)$, and conditioned on this mixture, we independently draw $\ell$ words $w_1,\cdots, w_{\ell}$ from the distribution $\sum_{i=1}^K h_i \nu_i$, where $\nu_i \in \Delta^{d-1}$ represents the distribution over the vocabulary corresponding to the $i^{th}$ topic. The words $w_i$ follow a one-hot vector encoding, so $w_i = e_i$ (where $e_i$ is a canonical basis vector) if and only if the $i^{th}$ word of the document is $w_i$. The following theorem relates low-order moments of this model to the topic distributions:
\begin{theorem}[\cite{Anand1,AnandHsu}]
\label{Anand}

Let
\begin{align*}
M_1 &\defeq \mathbb{E}[w_1], \quad \quad M_2 \defeq \mathbb{E}[w_1 \otimes w_2] - \frac{\beta_0}{\beta_0 + 1} M_1 \otimes M_1, \\
\mathcal{M}_3 & \defeq \mathbb{E}[w_1 \otimes w_2 \otimes w_3] - \frac{\beta_0}{\beta_0 + 2} ( \mathbb{E}[w_1 \otimes w_2 \otimes M_1] + \mathbb{E}[w_1 \otimes M_1 \otimes w_2] \\
& \quad \quad + \mathbb{E}[M_1 \otimes w_1 \otimes w_2] ) + \frac{2\beta_0^2}{(\beta_0 + 2)(\beta_0+1)} M_1 \otimes M_1 \otimes M_1.
\end{align*}
Then
\begin{equation}
\label{eq:moments}
M_2 = \sum_{i=1}^K \frac{\beta_i}{(\beta_0+1)\beta_0} \nu_i \otimes \nu_i, \quad \quad \mathcal{M}_3 = \sum_{i=1}^K \frac{2\beta_i}{(\beta_0+2)(\beta_0+1)\beta_0} \nu_i \otimes \nu_i \otimes \nu_i
\end{equation}
\end{theorem}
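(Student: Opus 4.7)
The plan is to condition on the topic mixture $h \sim \text{Dir}(\beta)$, exploit conditional independence of the words given $h$, and then integrate over $h$ using the standard Dirichlet moment identities.

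First, by the generative model, $\mathbb{E}[w_j \mid h] = \sum_i h_i \nu_i =: \mu(h)$ for each $j$, and $w_1, w_2, w_3$ are conditionally independent given $h$. Therefore
\[
\mathbb{E}[w_1 \otimes w_2 \mid h] = \mu(h)^{\otimes 2}, \qquad \mathbb{E}[w_1 \otimes w_2 \otimes w_3 \mid h] = \mu(h)^{\otimes 3}.
\]
Taking outer expectations reduces everything to computing $\mathbb{E}_h[h_i h_j]$ and $\mathbb{E}_h[h_i h_j h_k]$ and expanding $\mu(h)^{\otimes p}$ in the basis $\{\nu_{i_1}\otimes\cdots\otimes\nu_{i_p}\}$.

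Second, the Dirichlet moment formula
$\mathbb{E}\big[\prod_i h_i^{a_i}\big] = \Gamma(\beta_0)\prod_i\Gamma(\beta_i+a_i)\big/\big(\Gamma(\beta_0+\sum_i a_i)\prod_i\Gamma(\beta_i)\big)$
gives
\[
\mathbb{E}[h_i h_j] \;=\; \frac{\beta_i\beta_j + \delta_{ij}\beta_i}{\beta_0(\beta_0+1)},
\]
and an analogous calculation decomposes $\mathbb{E}[h_i h_j h_k]$, over the common denominator $\beta_0(\beta_0+1)(\beta_0+2)$, into four kinds of contributions: a fully-factored piece $\beta_i\beta_j\beta_k$; three pairwise-coincident pieces $\delta_{ij}\beta_i\beta_k$, $\delta_{ik}\beta_i\beta_j$, $\delta_{jk}\beta_i\beta_j$; and a fully-coincident piece $2\,\delta_{ij}\delta_{jk}\beta_i$.

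Third, assemble $M_2$ and $\mathcal{M}_3$. Using $M_1 = \sum_i(\beta_i/\beta_0)\nu_i$, the subtraction $\tfrac{\beta_0}{\beta_0+1}\,M_1\otimes M_1$ exactly cancels the $\beta_i\beta_j$ part of $\mathbb{E}[w_1\otimes w_2]$, leaving the diagonal $\delta_{ij}\beta_i$ term and producing the claimed formula for $M_2$. The third-order identity follows the same template: each of the three symmetric correction tensors $\mathbb{E}[w_\cdot\otimes w_\cdot\otimes M_1]$ expands to a linear combination of $\beta_i\beta_j\beta_k$ and one of the three pairwise $\delta$-terms; after multiplication by $\tfrac{\beta_0}{\beta_0+2}$ and summation, their combined contribution annihilates all three pairwise-coincident terms while leaving a residual of $-2\beta_i\beta_j\beta_k/[\beta_0(\beta_0+1)(\beta_0+2)]$ in the fully-factored direction. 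The final $M_1^{\otimes 3}$ term, with coefficient $\tfrac{2\beta_0^2}{(\beta_0+1)(\beta_0+2)}$, is calibrated precisely to cancel this residual, so what survives is the fully-coincident diagonal $\sum_i\tfrac{2\beta_i}{\beta_0(\beta_0+1)(\beta_0+2)}\,\nu_i\otimes\nu_i\otimes\nu_i$, as claimed.

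The main obstacle is not any deep analytic step but the combinatorial bookkeeping: one must verify that the prefactors of the three symmetric correction tensors and of the $M_1^{\otimes 3}$ term have been assigned exactly the values that eliminate every off-diagonal contribution in $\mathbb{E}[h_i h_j h_k]$. Once the Dirichlet moments are written in the sum-of-deltas form above, the matching of rational prefactors reduces to tracking a handful of terms.
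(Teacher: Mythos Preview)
Your proposal is correct and follows the standard conditioning-then-integrating argument that underlies this result in the cited references \cite{Anand1,AnandHsu}. Note, however, that the present paper does not itself supply a proof of Theorem~\ref{Anand}: it is quoted as a known result, so there is no in-paper argument to compare against. Your Dirichlet-moment bookkeeping (splitting $\mathbb{E}[h_ih_jh_k]$ into the fully-factored, pairwise-coincident, and fully-coincident pieces over the common denominator $\beta_0(\beta_0+1)(\beta_0+2)$, and then matching them to the three symmetric correction tensors and the $M_1^{\otimes 3}$ term) is exactly the computation carried out in the original sources, and your identification of the residual $-2\beta_i\beta_j\beta_k$ after the pairwise cancellations, and its annihilation by the $M_1^{\otimes 3}$ term, is accurate.
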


Hence, the topic distributions $\nu_i$ can be discovered from the rank-one decompositions of $M_2$ and $\mathcal{M}_3$; decomposition of $M_2$ alone is not sufficient since there is not a \emph{unique} rank-$K$ decomposition of a symmetric matrix, whereas the tensor CP decomposition is often unique, cf.~Thm.~\ref{thm:unique}. As described in \cite{AnandHsu}, this type of structure exists in the low-order moments of numerous latent variable models. The results presented in this paper extend to these problems as well.

\subsection{Theorem \ref{main} for Provable Topic Modeling}

In the context of topic modeling, we would like to recover the population moment-tensor $\mathcal{M}_3$ in \eqref{eq:moments} and its rank-one decomposition even though the empirical moment-tensor has entries with large errors. Theorem \ref{main} shows that exactly recovering the population moment-tensor is possible when the number of topics and the number of errors are bounded.
\begin{corollary}
\label{mainTop}
Let $\Z \in \mathbb{R}^{d \times d \times d}$ be the empirical third-order moment-tensor. Suppose the population moment-tensor $\mathcal{M}_3 \in \mathbb{R}^{d \times d \times d}$ satisfies $\mu(\mathcal{M}_3) \le \mu_0$, and that the true number of topics is $\rank_{\textnormal{CP}}(\mathcal{M}_3) = K$. Let tensor $\S \in \mathbb{R}^{d \times d \times d}$ be the tensor of discrepancies\footnote{
In practice, a robust version of RPCA, much like the Lagrangian formulation used in \eqref{RPCAlag}, can be used, which distinguishes ubiquitious small-magnitude discrepancies from rare but large discrepancies.
There are matrix RPCA results for this case which cannot promise exact recovery, but do guarantee recovery up to the level of the small-magnitude noise.
} between the empirical and population moment-tensors. Suppose $\S$ has a support set $\Omega$ that is uniformly distributed among all sets of cardinality $m$, and let $n = d^3 - m$. Then there exists a positive constant $c$ so that tensor RPCA with $\lambda = (3d)^{-1/2}$ exactly recovers $\mathcal{M}_3$ and $\S$ with probability $1-d^{-1-c}$, provided that
\begin{equation}
K \le \rho_r \left( \frac{n}{3d \log( n ) \alpha_0^4 \mu_0^2} \right)^{1/2} \quad \textnormal{\textit{and}} \quad m \le \rho_s d^3.
\end{equation}
\end{corollary}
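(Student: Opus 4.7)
The plan is to obtain Corollary \ref{mainTop} as a direct specialization of Theorem \ref{main} to the cubic setting $d_1 = d_2 = d_3 = d$, with the only substantive step being a translation from CP-rank $K$ (which is the natural quantity in the topic modeling context, since it counts the number of topics) to the weighted average Tucker-rank $\overline{r}(\mathcal{M}_3)$ used in the hypothesis of Theorem \ref{main}.

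First, I would observe that the decomposition $\Z = \mathcal{M}_3 + \S$ matches the model in Theorem \ref{main}, with $\mathcal{M}_3$ playing the role of the low-rank component $\X$. The coherence hypothesis $\mu(\mathcal{M}_3) \le \mu_0$ is explicitly assumed; the parameter $\alpha_0$ governing $\alpha(\mathcal{M}_3)$ appears implicitly in the bound and should be taken as an additional standing assumption (analogous to how it is handled in Theorem \ref{main}). The regularization parameter $\lambda = (3d)^{-1/2} = (d_1+d_2+d_3)^{-1/2}$ is exactly the choice prescribed by Theorem \ref{main} under $d_1 = d_2 = d_3 = d$, and the sparsity bound $m \le \rho_s d^3 = \rho_s d_1 d_2 d_3$ matches verbatim.

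The key step is the rank comparison. Theorem \ref{main} bounds $\overline{r}(\mathcal{M}_3)$, but the corollary states the bound in terms of $K = \rank_{\textnormal{CP}}(\mathcal{M}_3)$. Using the inequality $R \in [\overline{r}, \overline{r}^2]$ from Section~\ref{sec:Prelim1}, we have $\overline{r}(\mathcal{M}_3) \le \rank_{\textnormal{CP}}(\mathcal{M}_3) = K$. Hence, if
\[
K \le \rho_r \left( \frac{n}{3d \log(n) \alpha_0^4 \mu_0^2} \right)^{1/2},
\]
then the same bound holds for $\overline{r}(\mathcal{M}_3)$, which is precisely the rank hypothesis of Theorem \ref{main} specialized to $d_1 + d_2 + d_3 = 3d$. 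Applying Theorem \ref{main} yields exact recovery of $(\mathcal{M}_3, \S)$ with probability at least $1 - (3d)^{-1-c}$; absorbing the constant $3$ into $c$ (at the cost of a slightly smaller constant) gives the stated probability $1 - d^{-1-c}$.

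There is no real obstacle here: the proof is a direct invocation of Theorem \ref{main}. The only point worth being explicit about is that the corollary's CP-rank hypothesis is actually \emph{stronger} than what Theorem \ref{main} requires (since $\overline{r} \le K$), so the specialization is valid, and in fact Corollary \ref{mainTop} could in principle be stated with the weaker hypothesis $\overline{r}(\mathcal{M}_3) \le \rho_r (\cdot)^{1/2}$; stating it in terms of $K$ is simply the most natural form for the topic modeling application, since $K$ is the quantity directly identified with the number of latent topics.
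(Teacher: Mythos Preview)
Your proposal is correct and matches the paper's own proof essentially line for line: the paper also invokes the inequality $\overline{r}(\mathcal{M}_3) \le \rank_{\textnormal{CP}}(\mathcal{M}_3) = K$ (from $R \in [\overline{r},\overline{r}^2]$) and then applies Theorem~\ref{main} directly in the cubic case. Your additional remarks about absorbing the constant $3$ into $c$ and about the CP-rank hypothesis being slightly stronger than necessary are accurate elaborations not made explicit in the paper.
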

\begin{proof}
        Recall that $\rank_{\textnormal{CP}}(\mathcal{M}_3) \in [\overline{r}(\mathcal{M}_3),\overline{r}(\mathcal{M}_3)^2]$, so our assumption implies $\overline{r}(\mathcal{M}_3) \le \rho_r \left( \frac{n}{3d \log( n ) \alpha_0^4 \mu_0^2} \right)^{1/2}$. Applying Theome \ref{main} proves the result.
\end{proof}

Hence, if the number of topics is much less than the size of our vocabulary, $d$, then the actual third-order moment can be exactly recovered.

Of course, this result says nothing about the recovery of rank-one components of the moment-tensor $\mathcal{M}_3$, which are what reveal the topic distributions. Also, it is impractical to work with tensors of size $d \times d \times d$, as the vocabulary size is generally extremely large. We address each of these problems in the following sections.

\subsection{Identifiability}

By solving the nonconvex formulation of tensor RPCA \eqref{noncon1}, we implicitly solve for a rank-one decomposition of the moment-tensor. However, if this CPD is not unique, it is unclear whether the recovered rank-one factors correspond to the topic distributions. Fortunately, a tensor's CPD is unique under mild conditions.
\begin{theorem}
\label{thm:unique}
\cite{KruskalUnique} Let $k_{\nu}$ be the maximum value such that the vectors in any subset of $\{\nu_r\}_{r=1}^K$ of size $k_{\nu}$ are linearly independent. If $3 k_{\nu} \ge 2K + 2$, then the CPD of $\mathcal{M}_3 = \sum_{r=1}^R \nu_r \otimes \nu_r \otimes \nu_r$ is unique up to permutation and scaling of the topic distributions $\nu_r$.
\end{theorem}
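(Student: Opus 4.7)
The plan is to deduce this identifiability statement as a direct specialization of Kruskal's classical uniqueness theorem for third-order CP decompositions to the symmetric case, treating Kruskal's general theorem as a black box (as signaled by the citation \cite{KruskalUnique}). Kruskal's theorem asserts that any rank-$R$ decomposition $\mathcal{T} = \sum_{r=1}^R a_r \otimes b_r \otimes c_r$ with factor matrices $A,B,C$ whose Kruskal ranks $k_A,k_B,k_C$ (the largest $k$ for which every $k$ columns are linearly independent) satisfy $k_A + k_B + k_C \ge 2R+2$ is essentially unique: any alternative decomposition $\sum_r \tilde a_r \otimes \tilde b_r \otimes \tilde c_r$ of the same tensor with $R$ terms must satisfy $\tilde A = A\Pi D_1$, $\tilde B = B\Pi D_2$, $\tilde C = C\Pi D_3$ for a common permutation matrix $\Pi$ and diagonal matrices with $D_1 D_2 D_3 = I$.

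First I would specialize this to the symmetric setting at hand. Letting $V \defeq [\nu_1,\ldots,\nu_K]$ be the shared factor matrix of the given decomposition of $\mathcal{M}_3$, we have $A=B=C=V$ and hence $k_A=k_B=k_C=k_\nu$, so Kruskal's hypothesis reduces exactly to $3k_\nu \ge 2K+2$. Given any competing decomposition $\mathcal{M}_3 = \sum_r \tilde\nu_r \otimes \tilde\nu_r \otimes \tilde\nu_r$ with factor matrix $\tilde V$, Kruskal's conclusion provides $\tilde V = V\Pi D_i$ for $i=1,2,3$. Equating these three expressions and using invertibility of $V\Pi$ on its column space forces $D_1=D_2=D_3=:D$, and the determinant constraint $D^3=I$ then gives diagonal entries that are real cube roots of unity (equivalently all ones, up to the $\pm 1$ scaling absorbed into each $\tilde\nu_r$ that squares back to $+1$ after cubing). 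Thus $\tilde\nu_r = \alpha_r \nu_{\pi(r)}$ for a permutation $\pi$ and scalars $\alpha_r$, which is exactly the claimed uniqueness up to permutation and scaling.

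The only nontrivial step is Kruskal's general theorem itself, and this is where the real work lies. Its standard proof proceeds through Kruskal's \emph{permutation lemma}, a delicate combinatorial statement about matrices whose column sets interact with a fixed matrix of high Kruskal rank, combined with careful Khatri--Rao rank accounting applied to the matricizations $\mathcal{M}_{3,(k)} = V(V \odot V)^\top$ (up to appropriate permutation conventions). Handling that lemma self-contained would be the primary obstacle and requires several pages of case analysis; for the purposes of this corollary I would simply invoke the theorem rather than reproving it, since the reduction from the general asymmetric statement to the symmetric statement we need is elementary once the general result is in hand.
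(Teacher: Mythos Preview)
Your proposal is correct and matches the paper's treatment: the paper does not prove this statement but simply cites Kruskal's theorem, and your reduction---setting $A=B=C=V$ so that $k_A+k_B+k_C=3k_\nu$---is exactly the intended specialization. One minor remark: you need not restrict the competing decomposition to be symmetric, since Kruskal's theorem already handles arbitrary rank-$K$ competitors directly; and over $\mathbb{R}$ the constraint $D^3=I$ forces $D=I$ outright, so the aside about $\pm 1$ scalings is unnecessary.
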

The condition in Theorem \ref{thm:unique} is weaker than requiring the distributions to be linearly independent, which is necessary for most existing approaches \cite{Anand1,AnandHsu}. Further information on using Kruskal's theorem to establish identifiability results in latent structure models can be found in \cite{AllmanIdent}.

For any CPD, it is possible to form an equivalent CPD by permuting and rescaling the columns of the factor matrices. Precisely, if some tensor $\X = \sum_{r=1}^R a_r \otimes b_r \otimes c_r$, then $\X = \sum_{r=1}^R k_1 a_{\pi(r)} \otimes k_2 b_{\pi(r)} \otimes k_3 c_{\pi(r)}$ as well, where $\pi$ is any permutation on the set $\{ 1,2,\cdots R \}$ and $k_1 k_2 k_3 = 1$. However, neither of these operations affect the topic distributions found from $\mathcal{M}_3$ because we constrain each rank-one factor of $\mathcal{M}_3$ to be symmetric (i.e., of the form $\nu_r \otimes \nu_r \otimes \nu_r)$. This restriction disallows rescaling. Permuting the topic distributions does not change the distributions, so equivalence up to scaling and permutation still ensures that the topic distributions are well-defined.

\subsection{Dimensionality Reduction, Whitening, and Oversampling}
\label{sec:whiten}

It is often not feasible to decompose tensors of size $d \times d \times d$ when the vocabulary is large. To overcome this, existing works apply a dimensionality reduction technique so that the topic distributions can be recovered from the rank-one factors of a smaller tensor \cite{Anand1,AnandHsu}. This dimensionality reduction step is closely related to the whitening procedure presented in \cite{Anand1,AnandHsu} that is used to orthogonalize the factor matrices of a tensor so that orthogonal-decomposition algorithms, such as the Tensor Power Method, are applicable. In this section, we outline the whitening procedure, discuss its numerical instability, and show how a similar but stable procedure can be used for dimensionality reduction in tensor RPCA.

Suppose the topic distributions we seek are linearly independent. Let $M_2$ be the empirical second-order moment in \eqref{eq:moments}, and let $M_2 =: U \Sigma V^T$ be its (skinny) singular value decomposition. Define the whitening matrix as $W \defeq \Sigma^{\dagger} U^T \in \R^{K \times d}$, where $\Sigma^{\dagger}$ is the pseudo-inverse of $\Sigma$. The whitened third-order empirical moment is then \cite{Anand1,AnandHsu}
\begin{equation}
\widetilde{\mathcal{M}}_3 \defeq (W,W,W) \cdot \mathcal{M}_3 = \sum_{r=1}^K \lambda_r (W \nu_i \otimes W \nu_i \otimes W \nu_i ) \in \R^{K\times K \times K}
\end{equation}
The components $W\nu_i$ are orthogonal and can be found by decomposing $\widetilde{\mathcal{M}}_3 \in \R^{K\times K \times K}$, which is much smaller than the original tensor $\mathcal{M}_3 \in \R^{d\times d \times d}$. The factors of $\mathcal{M}_3$ can then be found by applying the inverse whitening transform, $\mathcal{M}_3 = (W^{\dagger},W^{\dagger},W^{\dagger}) \cdot \widetilde{\mathcal{M}}_3$, where $W^{\dagger} = U S$. Multiplying by the pseudo-inverse $W^{\dagger}$ is numerically unstable if $W$ has a large condition number, which is common for real-world data sets, and this introduces unnecessary error into the computation.

For tensor RPCA, we can perform the same dimensionality reduction without whitening. Tensor RPCA does not require the tensor to be orthogonally decomposable, so we can drop the assumption that the topic distributions are linearly independent. The transformation $Q \defeq I_{K \times d} U^T$ performs the same dimensionality reduction as the whitening transform: $\widehat{\mathcal{M}}_3 \defeq (Q,Q,Q) \cdot \mathcal{M}_3 \in \R^{K \times K \times K}$. However, $Q$ does not orthogonalize the factor matrices of $\mathcal{M}_3$ as this is not required, and $Q$ is perfectly conditioned with condition number 1. The factors of $\mathcal{M}_3$ can be recovered from the factors of $\widehat{\mathcal{M}}_3$ by using the inverse transformation matrix $Q^{\dagger} = Q^T I_{d \times K}$, and because $Q$ is perfectly conditioned, this inverse transformation does not introduce any errors due to numerical instability.

Instead of reducing the dimension to equal the number of topics, RPCA performs better with oversampling. The results of Corollary \ref{mainTop} require the tensor we decompose to have sufficiently large dimensions compared to the number of expected topics, so we can use the transformation matrix $Q' \defeq I_{K' \times d} U^T$, where $K'$ is chosen so that $K \le \rho_r \left( \frac{K^{'3} - m}{3 K' \log(K^{'3} - m) \alpha_0^4 \mu_0^2} \right)^{1/2}$, in accordance with Corollary \ref{mainTop}.

\section{Numerical Experiments}
\label{numerics}

We compare our model to existing methods for tensor and matrix RPCA on synthetic data and the escalator video dataset of \cite{LiData}. Our experiments demonstrate that our model significantly outperforms existing methods for tensor and matrix RPCA. We also see that we perform much better than the guarantees given in Theorem \ref{main}. Most remarkably, our model is able to recover tensors whose rank is much larger than its side lengths. In this regime, the Tucker-rank of the tensor is no longer an appropriate measure of the complexity of the data, and many existing methods for tensor RPCA \cite{GoldfarbTensors,SNN} are ineffective.
\begin{figure}[t]
\centering
\begin{tikzpicture}
\node (img1)  {\subfloat[\tiny This work]{\includegraphics[width=0.19\linewidth]{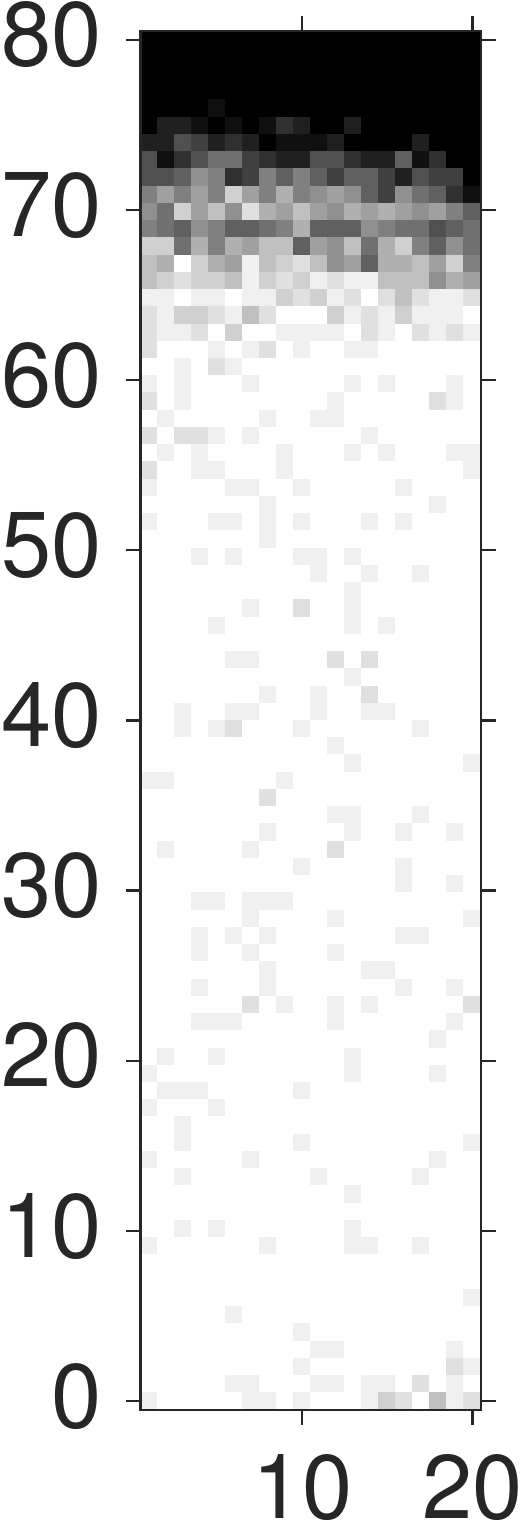}}
    \subfloat[\tiny \texttt{HoRPCA-S}]{\includegraphics[width=0.19\linewidth]{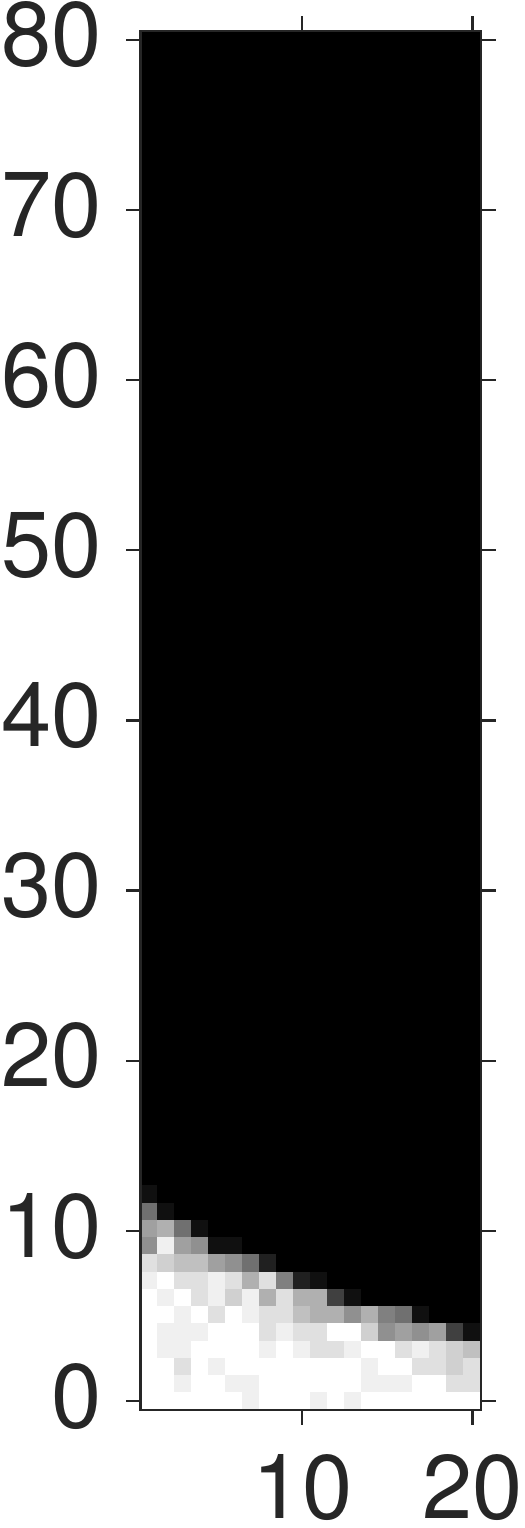}}
    \subfloat[\tiny \texttt{HoRPCA-C}]{\includegraphics[width=0.19\linewidth]{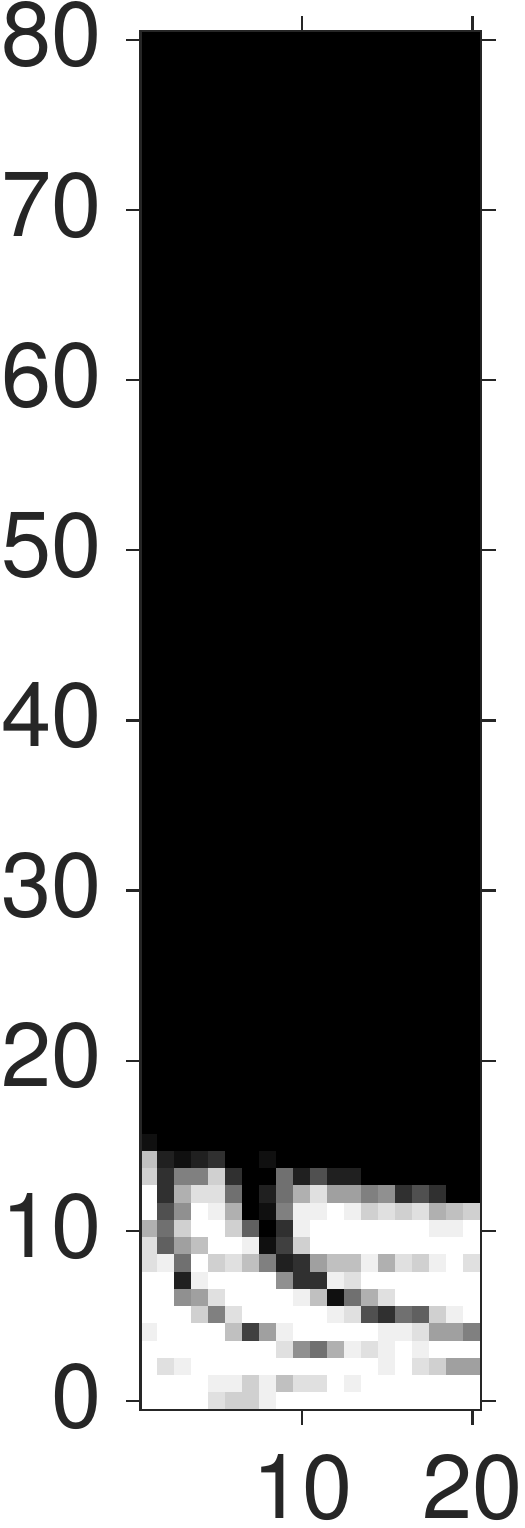}}
    \subfloat[\tiny Matrix RPCA]{\includegraphics[width=0.19\linewidth]{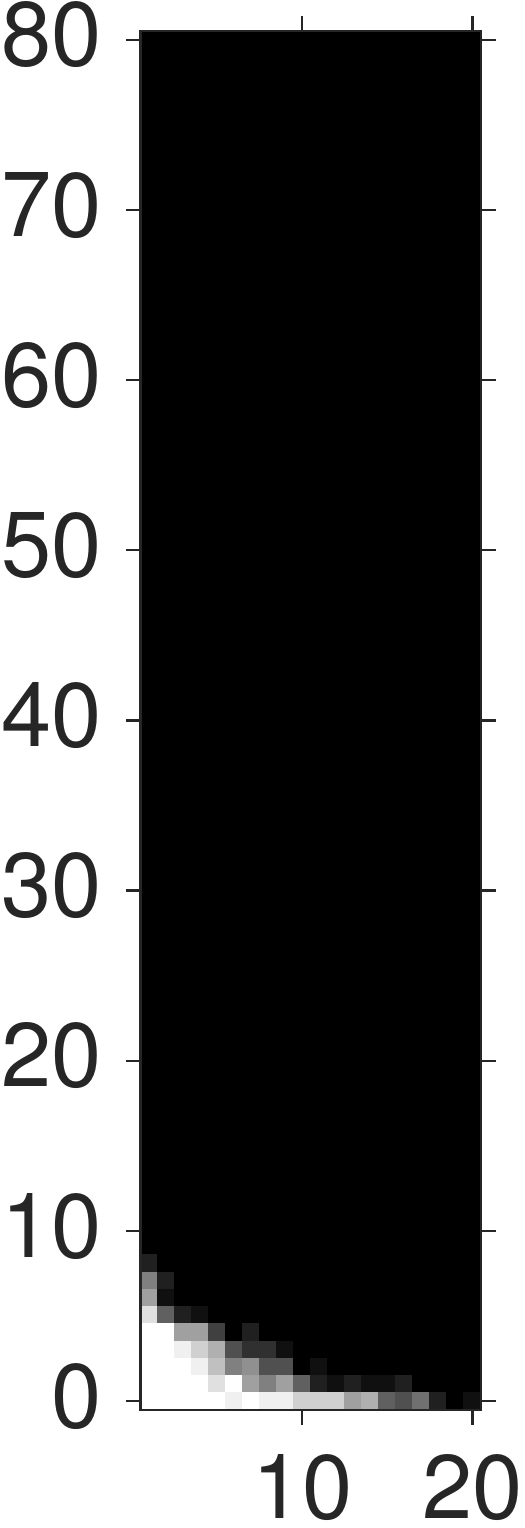}}};
\node[below=of img1, node distance=0cm, yshift=.9cm] {Percent sparsity of $\S$, $m/20^3$};
\node[left=of img1, node distance=0cm, rotate=90, anchor=center] {CP-rank of $\X$};
\end{tikzpicture}
\caption{A comparison of RPCA methods for recovering the decomposition $\Z = \X + \S$ with $\Z \in \R^{20 \times 20 \times 20}$. A pixel is colored white if $\X$ is recovered exactly. Each pixel represents the average of 16 trials. Algorithms (b), (c), and (d) are ill-posed when the CP-rank is greater than 20.}
\label{fig:synth}
\end{figure}

\subsection{Experiments on Synthetic Data}
\label{synth}

For each trial, we create an order-three, cubic dataset that can be represented as the sum of a low-rank tensor and a sparse tensor. To form the low-rank component, we randomly generate three factor matrices $A, B, C \in \R^{20 \times R}$, with each entry drawn i.i.d.\  $\mathcal{N}(0,1)$. We then form the low-rank component as
$
\X = \sum_{r=1}^R (a_r \otimes b_r \otimes c_r)
$.
We set the rank bound to be $R+10$ in our algorithm.

To form the sparse component, we make the tensor $\S \in \R^{20 \times 20 \times 20}$ with support chosen uniformly at random without replacement such that there are $m$ nonzeros, and  non-zero entries drawn i.i.d.\   $\mathcal{N}(0,1)$. Our ``observed'' dataset is then $\Z = \X + \S$. We vary both the rank $R$ and  sparsity $m$.

For each test, we perform 16 trials and measure the error between the recovered low-rank component $\X'$ and the actual low-rank component $\X$ using the relative least-squares loss $\frac{\|\X' - \X\|_F}{\|\X\|_F}$,
declaring ``exact recovery'' when this error is below $10^{-3}$. We fit our model using L-BFGS as implemented in \cite{SchmidtCode}, maintaining 10 iterations in memory, and we stop each trial after 1,000 iterations. For our parameters, we set $\lambdaX = 10^{-5}$ and $\lambdaS = 10^{-3}$. Both are small because we do not expect there to be any noise in $\Z$. Our results are shown in Figure \ref{fig:synth}, along with the results of the same experiment using matrix RPCA and two existing tensor RPCA methods.

The ranks reported in the figure are upper bounds, as it is possible for a certain $\X = \sum_{r=1}^R (a_r \otimes b_r \otimes c_r)$ to admit a lower-rank CP-decomposition. The Tucker-rank of the low-rank component is $(R,R,R)$ for $R \le 20$, and it is $(20,20,20)$ for $R \ge 20$. This is explicitly checked for each trial.

The two other tensor-based models, \texttt{HoRPCA-C} \cite{GoldfarbTensors} and \texttt{HoRPCA-S} \cite{GoldfarbTensors,SNN}, use the sum-of-nuclear-norms (SNN) regularizer, and \texttt{HoRPCA-S} is one of the only provable method for recovering low-rank tensors outside of this work \cite{SNN}. \texttt{HoRPCA-S} solves the problem
\begin{equation*}
\min_{\X, \S} \quad 
20 \sum_{i = 1}^K
\|\X'_{(i)}\|_* + \|\S'\|_{\textnormal{sum}}, \quad \textnormal{subject to:} \quad \X' + \S' = \Z,
\end{equation*}
See \eqref{eqSNN} for their recovery guarantees.
In our experiments, we see that \texttt{HoRPCA-S} performs worse than our tensor RPCA, and marginally better than matrix RPCA. When $R \ge 20$, \texttt{HoRPCA-S} is an ill-posed problem, because each matricization of $\X$ has full rank.

\texttt{HoRPCA-C} exhibits similar behavior. This program is defined as
\begin{equation}
\min_{\X', \S'} \quad \|\S'\|_{\textnormal{sum}} \quad \textnormal{subject to:} \quad \X' + \S' = \Z, \quad \rank(\X'_{(i)}) \le r_i.
\end{equation}
Although nonconvex, it has been shown to outperform other methods for tensor RPCA, including \texttt{HoRPCA-S} \cite{GoldfarbTensors}. However, this model still suffers from the effects of tensor matricization. For $R \ge 20$ in our experiments, \texttt{HoRPCA-C} is an ill-posed problem. For our implementation of \texttt{HoRPCA-C}, we set $r_i = R+1$ for all $i$. These rank bounds are much tighter than the rank bound $(R+10)$ we used for our model, but because $r_i$ cannot be larger than 20, it would not make sense to use the rank bound $R+10$ for the components of the Tucker-rank.

For the matrix RPCA in our experiments, we matricize the tensors and use the variational approach to RPCA developed in \cite{aravkin2014variational}.\footnote{Matrix RPCA code available at  \url{https://github.com/stephenbeckr/fastRPCA/}} The program we solve is
\begin{equation}
\min_{\X',\S'} \quad \max( \|\X'_{(1)}\|_*, \lambda \|\S'_{(1)}\|_{\textnormal{sum}} ) \quad \textnormal{s.t.} \quad \frac{1}{2} \|\X'_{(1)} + \S'_{(1)} - \Z_{(1)}\|^2_F \le \epsilon.
\end{equation}
One of the benefits of this approach is that we can choose $\lambda$ optimally because we know the matrices we would like to recover. We set $\lambda = \frac{\|\X_{(1)}\|_*}{\|S_{(1)}\|_1}$, and because we do not expect there to be any noise in $\Z$, we choose $\epsilon = 10^{-5}$.

\subsection{Tensor RPCA for Background Subtraction}

One of the most natural applications for RPCA is in background subtraction. In this section, we use our model to identify subjects in the ``escalator-video'' dataset provided by \cite{LiData}. This dataset is challenging for background-subtraction models because it contains three moving parts: a time-stamp, escalators, and the subjects. A strong model would be able to recognize that the motion of the escalators and the time-stamp is periodic, so these features belong to the low-rank component of the dataset, and the unpredictable motion of the subjects should be extracted into the sparse component.

\begin{figure}
\centering
\includegraphics[width=.95\linewidth]{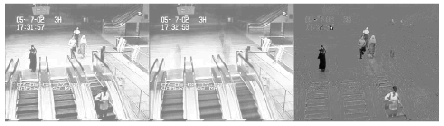}
\includegraphics[width=.95\linewidth]{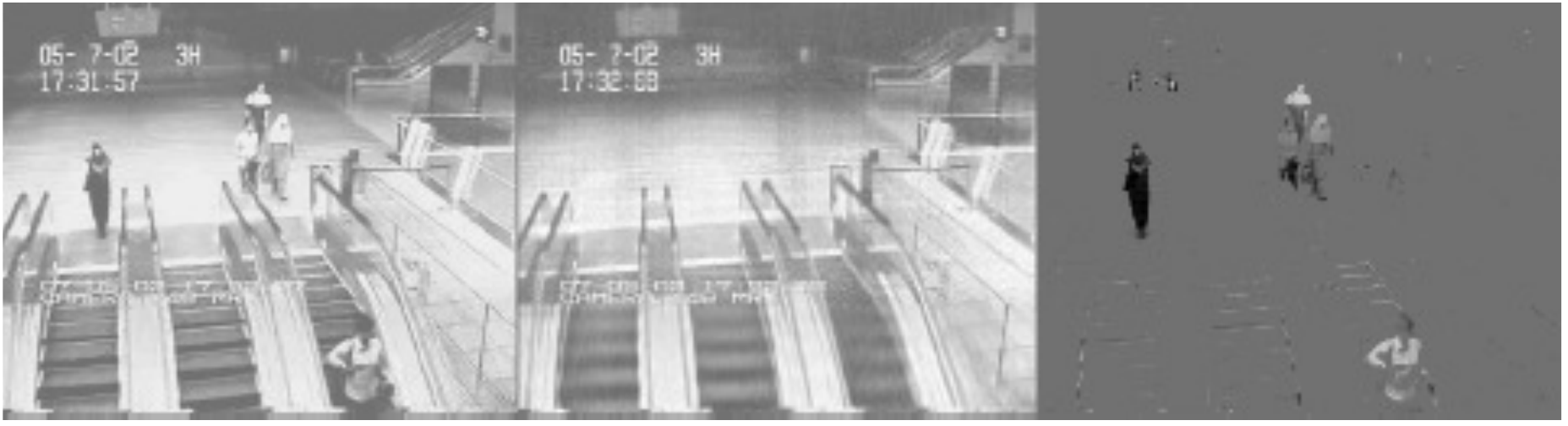}
\caption{\small The results of using (top) matrix RPCA and (bottom) tensor RPCA for background subtraction. From left to right: original image, low-rank component $(\X)$, and sparse component ($\S$). We see that tensor RPCA can more precisely extract the subjects into the sparse component and leave the moving escalator in the low-rank component. Frame 5 shown.}
\label{fig:movie}
\end{figure}

In Figure \ref{fig:movie}, we compare the performance of our tensor RPCA model and matrix RPCA for identifying the background in surveillance video. The video consists of 200 frames of size $130 \times 160$, which we store in a tensor of size $130 \times 160 \times 200$ or a matrix of size $20800 \times 200$. For tensor RPCA, we set the parameters $\lambdaX = 30, \ \lambdaS = 0.1$, and the rank-bound $R = 50$. For matrix RPCA, we choose $\lambda = 0.02$ and $\epsilon = 9 \times 10^{3}$. All of these parameters were chosen after careful tuning. The parameter $\epsilon$ was chosen to match the noise-level in the tensor RPCA solution to make the results more comparable. We see in Figure \ref{fig:movie} that tensor RPCA recovers a qualitatively superior decomposition, with the sparse component containing the subjects and very little of the stairs, and the low-rank component sufficiently ``sharp.'' In contrast, the low-rank component found by matrix RPCA appears more smoothed and has ``ghosts'' where the subjects should be removed, while the sparse component contains a significant amount of the stairs.

Even more impressive is the quantitative difference between the decompositions. The numerical rank of the low-rank component found using tensor RPCA is 48, and the sparse component is  5.5\%-sparse. For matrix RPCA, the recovered low-rank component has rank 58, and the sparse component is 49.3\%-sparse. It is clear that tensor-RPCA recovers a sparse component with significantly fewer non-zero entries than the sparse component that matrix RPCA recovers. To compare the low-rank components, it is better to compare degrees of freedom than the actual ranks. A tensor of size $d_1 \times d_2 \times d_3$ with CP-rank $r$ has $r (d_1 + d_2 + d_3)$ degrees of freedom, so tensor RPCA finds a low-rank component with \num{23520} degrees of freedom, and matrix RPCA finds a low-rank component with \num{1218000} degrees of freedom, which is an enormous difference in the complexity of the solutions.

\subsection{Topic Modeling Experiments}
\label{sec:numExTop}

Using the \texttt{20newsgroups} dataset \cite{Lang95}, 
we compare our tensor RPCA strategy to the tensor approach of \cite{AnandHsu} and the standard variational Bayes approach.
We use a training set of 11,314 documents, and compare our results on a testing set of 7,532 documents. Our vocabulary consists of the 1,000 most common words in the training-set corpus, disregarding words that occur in more that 95\% of the documents and those that occur in fewer than five documents. We also removed all English stopwords given by the University of Glasgow's stopword list. We look for five topics to discover, hoping to discover broad topics related to technology, recreation, science, politics, and religion. We do not include headers, footers, or quotes.

For the variational Bayes tests, we use SciKit-Learn's implementation of Latent Dirichlet Allocation, with hyperparameters $\alpha = 0.1, \beta = 0.1, \kappa = 0.7$, and $\tau_0 = 50$. We run the solver for twenty iterations. We use code provided by the authors of \cite{Anand1} for our comparison to their method. For these tests, we set $\alpha_0 = 0.01$ and the number of topics $K=5$. For tensor RPCA, we oversample, setting $K' = 10$, and we choose the other parameters to $\lambda = 10^{-3}$ and $\mu = 10^{-8}$. The topics that each approach finds are included in Appendix \ref{app:exper}.

The decomposition from tensor RPCA has numerical rank five, showing that it discovers five topics in the data. It finds 87 sparse errors. We quantitatively compared our results using the perplexity on the test set, defined for each word $w$ in the test set as \[ \textnormal{perplexity}(w) = \exp\left(-\frac{\mathcal{L}(w)}{d} \right),\]
where $d$ is the size of the vocabulary and $\mathcal{L}(w) = \log p(w|\Phi,\alpha)$ is the log-likelihood of the test set given the estimated topic-matrix $\Phi$ and hyperparameter $\alpha$. To quantitatively measure the interpretability of the topics, we used the observed coherence measure from \cite{teaLeaves}. A comparison using these measures is shown in Figure \ref{tab:quant}. Qualitatively, it appears that variational Bayes has the best topic coherence, but it also finds meaningless topics, grouping low-frequency words or numbers together, which are not article topics. The tensor-based approaches do not exhibit this problem.

\begin{figure}
\centering
\begin{tabular}{l c c}
\toprule
& Perplexity & Average Observed Coherence \\
\midrule
Variational Bayes & 1417 & 0.033 \\
Tensor CPD & 1503 & 0.006 \\
Tensor RPCA & 1309 & 0.007 \\
\bottomrule
\end{tabular}
\caption{A comparison of topic models on the 20newsgroups dataset. We use SciKit-Learn's implementation of variation Bayes for LDA and the method of \cite{Anand1} for the tensor CPD. The observed coherence for each topic is calculated using the method of \cite{teaLeaves}, and we present the average observed coherence over all five topics.}
\label{tab:quant}
\end{figure}

The tensor decomposition step was a bit faster using the approach of \cite{AnandHsu}, taking 3.46 seconds for the decomposition step to converge compared to tensor RPCA's 5.13 seconds. We measure convergence with respect to the relative change in the low-rank component, $\frac{\|\X_{k} - \X_{k-1}\|_F}{\|\X_k\|_F}$. We set the tolerance to $10^{-4}$ for both approaches. Both tensor-based methods are significantly faster than variational Bayes, which took 44.71 seconds to perform twenty iterations.

\section{Conclusion}

Our guarantees show that tensor RPCA with atomic-norm regularization outperforms matrix-based RPCA and RPCA algorithms based on matricization. Although the atomic norm is generally intractable, our use of a higher-order generalization of Burer-Monteiro factorization allows us to derive a nonconvex program equivalent to tensor RPCA. Our nonconvex model can be fit efficiently using any first-order optimizer. While convergence to a global optimum is not generally ensured, we provide sufficient conditions for a local minimum to be globally optimal which can be verified \emph{ex post facto}.

As an algorithm for the low-rank decomposition of tensors, tensor RPCA can be used for training many latent variable models, including LDA. In this context, our main result offers performance guarantees for estimating relevant parameters. Tensor RPCA offers many improvements over existing methods for decomposing moment-tensors. Notably, it does not require the underlying distributions to be linearly independent, and it avoids the unstable whitening step required by many existing methods. Tensor RPCA also provably recovers the population moment-tensor in the presence of sparsely distributed errors in the sample moments.

Empirically, our tensor RPCA significantly outperforms matrix RPCA as well as existing implementations of RPCA that use sum-of-nuclear-norm regularization. Our approach to tensor RPCA is also able to recover tensors whose CP-rank greatly exceeds all of its side lengths, a regime where sum-of-nuclear-norm models are ill-posed. Our results suggest that analyzing low-rank tensor recovery in terms of the Tucker-rank does not yield tight performance bounds, and future work might investigate performance guarantees in terms of the CP- or atomic-rank.

\bibliographystyle{abbrv}
\bibliography{thesisDriggs.bib}

\clearpage

\begin{appendices}

\section{Topic Model Results}
\label{app:exper}

\begin{table}[h]
\begin{tabular}{l p{5in} }
  \toprule  
 Topic & Associated words \\
 \midrule
1& 10 00 space 20 new 15 25 12 11 14 gun 30  000 16 1993 17 18 year 13 president april 50 mr national states \\
\midrule
2 & edu file use windows com program thanks drive like available software mail does using files version data know card problem window ftp email info used \\
\midrule
3 & key use government chip public used encryption keys like new law security clipper  information bike bit privacy using number data don private does technology make \\
\midrule
4 & people don just think like know time god good say did said does  way right ve believe make going really things want didn ll years \\
\midrule
5 & ax max b8f g9v a86 pl 145 1d9 0t 34u 1t 3t 
giz bhj wm 2di 75u 2tm cx bxn 7ey w7 chz sl 0d 
\\
  \bottomrule  
\end{tabular}
\caption{Topics found using variational Bayes for LDA parameter estimation}
\end{table}

\begin{table}[h]
\begin{tabular}{l p{5in} }
  \toprule  
 Topic & Associated words \\
 \midrule
1 & effect sorry articles come unix current main trying dos doesn involved encryption likely 
report day news sense administration files interesting lots mode suggest sources key \\
\midrule
2 & dos perfect key thinking goes knowledge likely doesn saying didn rights ways best 
sale times reason think jewish got makes wanted good switch die things \\
\midrule
3 & likely news key driver times got used version thinking knowledge dos gov theory 
makes problems useful years effect san needed ways local 100 card windows \\
\midrule
4 & knowledge doesn theory dos likely key wire apparently main used needed age hi 
information high looks administration cards approach function programs input wanted meaning encryption \\
\midrule
5 & b8f windows used maybe card version wire programs come israeli graphics best dos 
rights age problem articles difficult manual keyboard distribution usually happen little switch \\
\bottomrule
\end{tabular}
\caption{Topics found using the tensor-based method of moments from \cite{Anand1}. ALS was used for the tensor decomposition.}
\end{table}

\begin{table}
\begin{tabular}{l p{5in} }
  \toprule  
 Topic & Associated words \\
 \midrule
 1 & b8f news 000 card likely years games got teams times yes san dos 
reason 100 subject key gas thinking second didn agree things ways rights \\
\midrule
2 & likely key knowledge dos effect doesn thinking perfect used got theory news times 
version needed ways wanted rights reason makes useful didn main saying wire \\
\midrule
3 & effect sorry articles unix current encryption news san 000 involved conference come day 
got report suggest assume sense 100 1d9 department united main printer office \\
\midrule
4 & b8f theory doesn dos card maybe age driver games gas used files wire 
lord high function main source problems approach term looks programs letter apparently \\
\midrule
5 & theory effect knowledge doesn main wire age encryption high files looks hi information 
apparently administration function programs input cards final needed approach used solution sense \\
\bottomrule
\end{tabular}
\caption{Topics found using the tensor-RPCA based method-of-moments in this work}
\end{table}

\FloatBarrier

\section{Equivalence of Bernoulli and Uniform Sampling Models}
\label{app:berunif}

We would like to show that if conditions \eqref{condWS}(d) and (e) hold with high probability when the support of $\S$ follows a Bernoulli distribution with parameter $2 \rho$, then the same conditions hold with high probability when the support of $\S$ is uniformly distributed over all sets of cardinality $m$. We also need to show that the conclusions of Lemma \ref{lem:opBound} hold under the Bernoulli model as well as the uniform model. We first prove equivalence between the Bernoulli and uniform models for any event whose probability decreases with $|\Omega|$, and then we show that conditions \eqref{condWS}(d) and (e) and Lemma \ref{lem:opBound} satisfy this property.

\begin{lemma}
\label{lem:bernUnif}
Let $E$ be any event whose probability decreases as $|\Omega|$ increases. If $E$ holds with high probability when $\Omega$ follows the Bernoulli distribution of \eqref{eq:berdist}, then $E$ holds with high probability when the support of $\Omega$ is uniformly distributed over all sets of cardinality $m$, and the converse holds as well.
\end{lemma}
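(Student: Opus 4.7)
The plan is to couple the two models through conditioning on $|\Omega|$. The crucial observation is that conditionally on $|\Omega|=k$, both the Bernoulli model \eqref{eq:berdist} and the uniform-over-$k$-subsets model produce a support that is uniformly distributed among all $k$-subsets of $[d_1]\times[d_2]\times[d_3]$; so the conditional distribution of $\Omega$ is identical in the two settings. Writing $N \defeq d_1 d_2 d_3$ and $p(k) \defeq \mathbb{P}(E \mid |\Omega|=k)$, the tower property gives
\[
\mathbb{P}_{\text{Bern}(\rho)}(E) = \sum_{k=0}^{N} \mathbb{P}(|\Omega|=k)\, p(k),
\]
where $|\Omega|\sim\text{Binomial}(N,\rho)$. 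By hypothesis $p$ is a decreasing function of $k$, so only tail concentration of $|\Omega|$ is needed to compare the two models.

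For the Bernoulli-to-uniform direction I would take Bernoulli parameter $2\rho = 2m/N$, so that $\mathbb{E}|\Omega|=2m$. Restricting the sum above (applied to the event $E^{c}$) to $k\ge m$ and using that $1-p(k)\ge 1-p(m)$ in that range yields $\mathbb{P}_{\text{Bern}(2\rho)}(E^{c}) \ge (1-p(m))\,\mathbb{P}(|\Omega|\ge m)$. A standard Chernoff bound on a $\text{Binomial}(N,2\rho)$ random variable with mean $2m$ gives $\mathbb{P}(|\Omega|\ge m)\ge 1-e^{-m/4}$, which is essentially $1$ in the sparsity regime of Theorem~\ref{main}, so rearrangement shows that $1-p(m)$ is small whenever $\mathbb{P}_{\text{Bern}(2\rho)}(E^{c})$ is.

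The converse is symmetric: I would take Bernoulli parameter $\rho' = m/(2N)$ so that $\mathbb{E}|\Omega|=m/2$, use that $p(k)\ge p(m)$ for $k\le m$ to obtain $\mathbb{P}_{\text{Bern}(\rho')}(E)\ge p(m)\,\mathbb{P}(|\Omega|\le m)$, and apply a Chernoff bound on the upper tail of $|\Omega|$ to drive $\mathbb{P}(|\Omega|\le m)$ to $1$.

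No step here is a substantive obstacle — the argument is essentially a bookkeeping check that one routinely finds in matrix RPCA proofs (cf.\ \cite{RPCA,CandesRecht2009}). The only thing to track carefully is the multiplicative offset between the Bernoulli parameter and $m/N$: the factor of $2$ appearing in the statement is exactly what is needed to place $\mathbb{E}|\Omega|$ safely on the correct side of the threshold $m$, so that Chernoff concentration keeps $|\Omega|$ on the monotonicity-friendly side of that threshold with probability close to $1$.
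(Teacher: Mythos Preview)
Your proposal is correct and follows essentially the same argument as the paper: both condition on $|\Omega|$, exploit that the conditional law is uniform, use the monotonicity of $k\mapsto p(k)$ to compare with $p(m)$, and then invoke binomial tail concentration after shifting the Bernoulli parameter (you use a multiplicative factor of $2$, the paper uses an additive $\epsilon$). The only cosmetic difference is that you phrase the Bernoulli-to-uniform direction in terms of $E^{c}$ while the paper works directly with $E$, but the resulting inequalities are equivalent.
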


\begin{proof}
Our proof is similar to \cite[App. 7]{RPCA}. Let $\mathbb{P}_{\textnormal{Ber}(\rho)}$ and $\mathbb{P}_{\textnormal{Unif}(k)}$ denote the probabilities calculated under the Bernoulli and uniform models, respectively. We begin by showing that if $E$ holds with high probability under the Bernoulli model, then it also holds with high probability under the uniform model.
\begin{align}
    \mathbb{P}_{\textnormal{Ber}(\rho)} \left( E \right) & = \sum_{k=0}^{d_1 d_2 d_3} \mathbb{P}_{\textnormal{Ber}(\rho)} \left( E \big| |\Omega| = k \right) \mathbb{P}_{\textnormal{Ber}(\rho)} \left( |\Omega| = k \right) \\
    & \le \sum_{k=0}^{m-1} \mathbb{P}_{\textnormal{Ber}(\rho)} \left( |\Omega| = k \right) + \sum_{k=m}^{d_1 d_2 d_3} \mathbb{P}_{\textnormal{Unif}(k)} \left(E\right) \mathbb{P}_{\textnormal{Ber}(\rho)} \left( |\Omega| = k \right) \\
    &\le \mathbb{P}_{\textnormal{Ber}(\rho)}\left( |\Omega| < m \right) + \mathbb{P}_{\textnormal{Unif}(m)}(E).
\end{align}
Here, we have used the facts that the distribution of $\Omega$ conditioned on its cardinality is uniform, and that $\mathbb{P}_{\textnormal{Unif}(k)} \left( E \right) \le \mathbb{P}_{\textnormal{Unif}(m)} \left( E \right)$ for all $k \ge m$. This implies that
\begin{equation}
    \mathbb{P}_{\textnormal{Unif}(m)} \left( E \right) \ge \mathbb{P}_{\textnormal{Ber}(\rho)} \left( E \right) - \mathbb{P}_{\textnormal{Ber}(\rho)} \left( |\Omega| < m \right).
\end{equation}
Choosing $\rho = \frac{m}{d_1 d_2 d_3} + \epsilon$ for $\epsilon > 0$, we have $\mathbb{P}_{\textnormal{Ber}(\rho)} (|\Omega| < m) \le e^{-\frac{\epsilon^2 d_1 d_2 d_3}{2 \rho}}$. This proves that $E$ holds with high probability under the uniform model.

The converse holds as well. Suppose $E$ holds under the uniform model. Then
\begin{align}
    \mathbb{P}_{\textnormal{Ber}(\rho)} \left( E \right) & \ge \sum_{k=0}^{m} \mathbb{P}_{\textnormal{Ber}(\rho)} \left( E \big| |\Omega| = k \right) \\
    & \ge \mathbb{P}_{\textnormal{Unif}(m)}(E) \sum_{k=0}^m \mathbb{P}_{\textnormal{Ber}(\rho)}(|\Omega| = k) \\
    &= \mathbb{P}_{\textnormal{Unif}(m)}(E) \mathbb{P}_{\textnormal{Ber}(\rho)}(|\Omega| \le m).
\end{align}
Choosing $m$ large enough ensures that $\mathbb{P}(|\Omega| > m)$ is exponentially small, so $E$ holds with high probability under the Bernoulli model as well.
\end{proof}

All that is left is to show that conditions \eqref{condWS}(d) and (e) and Lemma \ref{lem:opBound} satisfy the assumptions of the previous lemma.

\begin{lemma}
        Let $\S_0$ have support set $\Omega_0$, and let $\S_0'$ have support set $\Omega_0' \subsetneq \Omega_0$. Let $\W^{S_0} = \lambda \PP_{\X^{\perp}} (\PP_{\Omega_0} - \PP_{\Omega_0} \PP_{\X} \PP_{\Omega_0})^{-1} \sgn(S_0)$ and $G_0 = \sgn(\S_0)$, with $S_0'$ and $G_0'$ defined analogously. Then
        \begin{equation}
        \label{eq:berunif0}
            \mathbb{P} \left( \PP_{\X} (\tfrac{d_1 d_2 d_3}{n} \PP_{\Omega_0} - I)\PP_{\X} \le \sigma \right) \le \mathbb{P} \left( \PP_{\X} (\tfrac{d_1 d_2 d_3}{n} \PP_{\Omega_0'} - I)\PP_{\X} \le \sigma \right),
        \end{equation}
        \begin{equation}
        \label{eq:berunif1}
            \mathbb{P} \left( \| \W^{\S_0} \| < \frac{1}{8} \right) \le \mathbb{P} \left( \| \W^{\S_0'} \| < \frac{1}{8} \right),
        \end{equation}
        and
        \begin{equation}
        \label{eq:berunif2}
             \mathbb{P} \left(  \|\PP_{\Omega_0^{\perp}} \W^{\S_0} \|_{\max} < \frac{\lambda}{8} \right) \le \mathbb{P} \left(  \|\PP_{\Omega_0^{' \perp}} \W^{\S_0'} \|_{\max} < \frac{\lambda}{8} \right).
        \end{equation}
\end{lemma}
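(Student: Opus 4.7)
The plan is to establish each of the three comparisons by a coupling argument that reduces the probability inequality to a (deterministic or conditional) monotonicity of the relevant functional under inclusion of the support. Set up a common probability space on which $\tilde G$ is a Rademacher tensor on $[d_1]\times[d_2]\times[d_3]$ with i.i.d.\ $\pm 1$ entries, and put $G_0 = \PP_{\Omega_0}\tilde G$, $G_0' = \PP_{\Omega_0'}\tilde G$. Then $G_0$ and $G_0'$ have the required sign distributions on $\Omega_0$ and $\Omega_0'$, and $G_0 - G_0' = \PP_{\Omega_0\setminus\Omega_0'}\tilde G$ is independent of $G_0'$. Downstream, this lemma is combined with Lemma \ref{lem:bernUnif} to convert between Bernoulli and uniform support models; that is the ultimate reason the inequalities are phrased in terms of inclusion of supports.

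For \eqref{eq:berunif0}, set $T_\Omega \defeq \PP_{\X}(\tfrac{d_1 d_2 d_3}{n}\PP_\Omega - I)\PP_{\X}$. The identity $T_{\Omega_0} - T_{\Omega_0'} = \tfrac{d_1 d_2 d_3}{n}\PP_{\X}\PP_{\Omega_0\setminus\Omega_0'}\PP_{\X}$ is a positive semidefinite perturbation, so Weyl's inequality gives $\lambda_k(T_{\Omega_0'})\le \lambda_k(T_{\Omega_0})$ for every index $k$. The event $\{\vertiii{T_\Omega}\le \sigma\}$ is the two-sided spectral band $\lambda_{\max}\le \sigma$ and $\lambda_{\min}\ge -\sigma$; Loewner growth tightens the upper bound and relaxes the lower one, so on the regime where $\lambda_{\min}(T_{\Omega_0'})$ is well above $-\sigma$ (which is exactly what Lemma \ref{lem:opBound} provides in the parameter range of Theorem \ref{main}), the active constraint is the upper bound and Weyl's monotonicity upgrades to $\vertiii{T_{\Omega_0'}}\le \vertiii{T_{\Omega_0}}$. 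Hence $\{\vertiii{T_{\Omega_0}}\le\sigma\}\subseteq\{\vertiii{T_{\Omega_0'}}\le\sigma\}$, giving the desired probability inequality.

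For \eqref{eq:berunif1} and \eqref{eq:berunif2}, expand $\W^{\S_0} = \lambda\PP_{\X^\perp}\sum_{k\ge 0}(\PP_{\Omega_0}\PP_{\X}\PP_{\Omega_0})^k G_0$ and likewise for $\W^{\S_0'}$ using the Neumann series. Conditioning on the supports and on $G_0'$, the only remaining randomness is $\tilde G|_{\Omega_0\setminus\Omega_0'}$, which enters $\W^{\S_0}$ as an additive Rademacher term. A symmetrization step---flipping any subset of the fresh signs leaves the conditional law of these entries invariant---combined with Jensen's inequality applied to the convex functionals $\|\cdot\|$ and $\|\PP_{\Omega^\perp}\,\cdot\,\|_{\max}$ yields, after bookkeeping for the difference in the inverse operators, the conditional dominations $\|\W^{\S_0'}\|\le \mathbb{E}[\,\|\W^{\S_0}\|\mid G_0'\,]$ and analogously for the max-norm. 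Thresholding at $1/8$ and $\lambda/8$ respectively gives the probability inequalities.

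The main obstacle is \eqref{eq:berunif0}: because the operator norm is two-sided, Loewner monotonicity of $\PP_{\X}\PP_\Omega\PP_{\X}$ moves $\lambda_{\max}$ and $\lambda_{\min}$ in the same direction but in opposite directions as they feed into $\vertiii{T_\Omega}$, so the deterministic monotonicity only holds once one has slack on the lower eigenvalue. The cleanest way to handle this is to intersect the event with the high-probability event guaranteed by Lemma \ref{lem:opBound} and absorb the residual rare event into the failure probability. A secondary technical point for \eqref{eq:berunif1}--\eqref{eq:berunif2} is that the Neumann inverses $(\PP_\Omega - \PP_\Omega\PP_{\X}\PP_\Omega)^{-1}$ act on different ranges for $\Omega_0$ and $\Omega_0'$; constructing the shared $\tilde G$ on the full index set and doing the symmetrization after the expansion is exactly what absorbs this mismatch into an almost-sure inequality on the common probability space.
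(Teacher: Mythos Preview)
The paper takes a different and much shorter route: it never attempts a coupled or pathwise comparison of $\W^{\S_0}$ and $\W^{\S_0'}$, but instead invokes the explicit tail bounds already established in \eqref{eq:12d} and \eqref{eq:Firste}. Those bounds have the form $[\text{term independent of }\Omega] + \mathbb{P}(\vertiii{\PP_\Omega\PP_{\X}}>\sigma)$, and since $\vertiii{\PP_{\Omega'}\PP_{\X}}\le\vertiii{\PP_\Omega\PP_{\X}}$ holds deterministically whenever $\Omega'\subset\Omega$ (because $\PP_{\Omega'}=\PP_{\Omega'}\PP_\Omega$), the only $\Omega$-dependent term is monotone and the lemma follows at once. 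Your coupling program is more ambitious and, if it worked, would give a statement independent of the specific concentration constants; the paper's approach buys brevity by recycling work already done in Section~\ref{sec:WS}.

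However, there is a genuine gap in your argument for \eqref{eq:berunif1}--\eqref{eq:berunif2}. Averaging $G_0$ over the fresh Rademacher signs on $\Omega_0\setminus\Omega_0'$ does yield $G_0'$, but applying $(\PP_{\Omega_0}-\PP_{\Omega_0}\PP_{\X}\PP_{\Omega_0})^{-1}$ to $G_0'$ is \emph{not} the same as applying $(\PP_{\Omega_0'}-\PP_{\Omega_0'}\PP_{\X}\PP_{\Omega_0'})^{-1}$ to $G_0'$: the two inverses act on different ranges and differ by terms that do not cancel under the sign average, so you do not obtain $\W^{\S_0'}=\mathbb{E}[\W^{\S_0}\mid G_0']$. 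The phrase ``bookkeeping for the difference in the inverse operators'' hides a real obstruction, not a routine calculation. Second, even granting an inequality of the form $\|\W^{\S_0'}\|\le\mathbb{E}[\|\W^{\S_0}\|\mid G_0']$ almost surely, this only yields $\{\mathbb{E}[\|\W^{\S_0}\|\mid G_0']<\tfrac18\}\subset\{\|\W^{\S_0'}\|<\tfrac18\}$, and there is no general relation between $\mathbb{P}\bigl(\mathbb{E}[\|\W^{\S_0}\|\mid G_0']<\tfrac18\bigr)$ and $\mathbb{P}(\|\W^{\S_0}\|<\tfrac18)$, so the thresholding step does not go through. The paper sidesteps both issues by comparing probability \emph{bounds} rather than the random variables themselves.
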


\begin{proof}
Inequality \eqref{eq:berunif0} is clear from Lemma \ref{lem:opBound}, and this inequality implies the other two. Consider \eqref{eq:berunif1}. The bound in \eqref{eq:12d} shows that
\begin{align}
    \mathbb{P}\left( \lambda \left\| \sum_{k=1}^\infty (\PP_{\Omega_0} \PP_{\X} \PP_{\Omega_0})^k G_0 \right\| > t \right) &\le 2 \exp\left( -\frac{t^2 (1-\sigma^2)^2}{32 \sigma^4 \lambda^2} + \frac{21 (d_1+d_2+d_3)}{4} \right) \\
    & \quad + \mathbb{P}( \| \PP_{\Omega_0} \PP_{\X} \| > \sigma ).
\end{align}
With $\sigma$ fixed, the only dependence on $\Omega_0$ is in the second term on the right. It is clear from \eqref{eq:berunif0} that this probability is decreasing $|\Omega_0|$. Because $|\Omega_0'| \le |\Omega_0|$ this implies \eqref{eq:berunif1}. The last inequality, \eqref{eq:berunif2}, follows from the bound in \eqref{eq:Firste} by the same argument.

\end{proof}

Finally, we need to show that exact recovery under the random-sign model implies exact recovery under the fixed sign model.
\begin{lemma}
\label{lem:deran}
    Suppose $\LC$ satisfies the conditions of Theorem \ref{main} and that the locations of the nonzero entries of $\S$ follow the Bernoulli model with parameter $2\rho$. Assume further that the signs of $\S$ follow a Bernoulli model with parameter $\rho$. If the solution to \eqref{RPCA} is exact with high probability, then it is also exact with at least the same probability for the model in which the signs are fixed and the locations are sampled from the Bernoulli model with parameter $\rho$.
\end{lemma}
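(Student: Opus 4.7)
The plan is to adapt the sign-symmetrization argument of \cite{RPCA} to the tensor setting, reducing the fixed-sign model to the random-sign model via a coupling.

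First I would set up the coupling. Draw $\Omega_2\sim\text{Ber}(2\rho)$ and, independently, draw i.i.d.\! uniform $\pm 1$ signs $\{\sigma_{i,j,k}\}$; the random-sign sparse tensor is then $\tilde\S:=\sigma\odot\mathbb{1}_{\Omega_2}$. For any desired sign pattern $\tau\in\{\pm 1\}^{d_1\times d_2\times d_3}$ (the target signs for the fixed-sign model), define $\Omega_1:=\{(i,j,k)\in\Omega_2:\sigma_{i,j,k}=\tau_{i,j,k}\}$. Each element of $\Omega_2$ lies in $\Omega_1$ independently with probability $1/2$, so marginally $\Omega_1\sim\text{Ber}(\rho)$, and the fixed-sign sparse tensor is $\S_0:=\tau\odot\mathbb{1}_{\Omega_1}$. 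By construction $\sgn(\tilde\S)=\sgn(\S_0)=\tau$ on $\Omega_1$, which is the bridge that will allow a single dual certificate to serve both problems.

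Next I would transfer the dual certificate $\W^\perp=\W^\LC+\W^\S$ from Sections \ref{WL} and \ref{sec:WS}. For $\W^\LC$ I would rerun the golfing scheme sampling from $\Omega_1^\perp\supseteq\Omega_2^\perp$; the concentration bounds in Lemmas \ref{specBounds} and \ref{sparseBounds} do not involve signs and only improve when $\Omega^\perp$ grows, so they continue to hold. For $\W^\S$ I would substitute $\tau$ into the closed-form expression $\W^\S=\lambda\PP_{\X^\perp}(\PP_{\Omega_1}-\PP_{\Omega_1}\PP_{\X}\PP_{\Omega_1})^{-1}\tau$. Every tail bound used in the proofs of \eqref{condWS}(d) and (e), notably \eqref{eq:12d} and \eqref{eq:Firste}, applied Hoeffding's inequality to sums of i.i.d.\! symmetric $\pm 1$ variables conditioned on the support, so the identical bounds hold verbatim for $\tau|_{\Omega_1}$ under the coupling. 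The monotonicity $\vertiii{\PP_{\Omega_1}\PP_{\X}}\le\vertiii{\PP_{\Omega_2}\PP_{\X}}$ that follows from $\Omega_1\subseteq\Omega_2$ propagates the operator-norm bound of Lemma \ref{lem:opBound} from the larger support to the smaller one.

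The main obstacle is making the sign-substitution genuinely rigorous, since naively conditioning on $\sigma|_{\Omega_1}=\tau|_{\Omega_1}$ reweights the failure probability. I would handle this using the symmetrization argument of \cite{RPCA}: for any fixed $\tau$, the joint law of $(\Omega_1,\tau)$ produced by the coupling matches the $\text{Ber}(\rho)$ fixed-sign model exactly, and the certificate-transfer step above shows that the failure event $\mathcal{F}_{\text{fix}}(\tau,\Omega_1)$ for \eqref{RPCA} is contained in the failure event $\mathcal{F}_{\text{ran}}(\sigma,\Omega_2)$. Taking probabilities and invoking Lemma \ref{lem:bernUnif} wherever a conversion between Bernoulli and uniform sampling is needed yields $\mathbb{P}_{\text{fix}}(\text{fail})\le\mathbb{P}_{\text{ran}}(\text{fail})$, which, together with the high-probability recovery already established for the random-sign model in Section \ref{sec:lem:dualCert}, proves the lemma.
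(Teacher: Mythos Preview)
Your coupling $\Omega_1=\{(i,j,k)\in\Omega_2:\sigma_{i,j,k}=\tau_{i,j,k}\}$ is exactly the one used in \cite{RPCA}, and the paper's own proof simply cites \cite[Thm.~2.3]{RPCA} and remarks that the tensor generalisation is trivial. So the overall architecture of your argument matches the reference.

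There is, however, a genuine gap in your handling of $\W^\S$. You propose to rebuild $\W^\S=\lambda\PP_{\X^\perp}(\PP_{\Omega_1}-\PP_{\Omega_1}\PP_{\X}\PP_{\Omega_1})^{-1}\tau$ and then claim that the Hoeffding bounds \eqref{eq:12d} and \eqref{eq:Firste} apply ``verbatim'' because $\tau|_{\Omega_1}=\sigma|_{\Omega_1}$ under the coupling. But those bounds condition on the support and then exploit the randomness of the \emph{signs}; once you condition on $\Omega_1$, the vector $\tau|_{\Omega_1}$ is deterministic (it is the fixed sign pattern restricted to a fixed set), so there is no remaining sign randomness for Hoeffding to act on. Conditioning instead on $\Omega_2$ does not help either, since then $\Omega_1=\Omega_1(\sigma)$ is a function of the signs and the operator $(\PP_{\Omega_1}-\PP_{\Omega_1}\PP_{\X}\PP_{\Omega_1})^{-1}$ appearing in the Hoeffding sum depends on the very variables you are summing over. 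Nor can you simply reuse the certificate built for $(\Omega_2,\sigma)$: on $\Omega_2\setminus\Omega_1\subset\Omega_1^\perp$ that $\W^\S$ takes the values $\lambda\sigma$, violating condition \eqref{condWS}(e) for the smaller support.

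The missing ingredient is the \emph{elimination theorem} (Theorem~2.2 in \cite{RPCA}), whose tensor analogue is immediate: if $(\X,\S)$ is the unique minimiser of \eqref{RPCA} for $\Z=\X+\S$, then for any $\Omega'\subset\supp(\S)$ the pair $(\X,\PP_{\Omega'}\S)$ is the unique minimiser for $\Z'=\X+\PP_{\Omega'}\S$. This gives the failure-event inclusion $\mathcal{F}_{\text{fix}}(\tau,\Omega_1)\subset\mathcal{F}_{\text{ran}}(\sigma,\Omega_2)$ directly, at the level of primal optimality, without ever rebuilding or transferring a dual certificate. With the elimination theorem in hand, your coupling and your final probability comparison go through exactly as you wrote them.
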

\begin{proof}
Lemma \ref{lem:deran} is proved for the matrix case in \cite[Thm. 2.3]{RPCA}, and generalizing this proof to higher-orders is trivial.
\end{proof}

\section{Proof of Lemma \ref{lem:SubGaussBound}}
\label{app:tomi}

The following lemma is implied by Lemma 1 and Theorem 1 of \cite{Tomioka}.

\begin{lemma}
    Suppose that each element of $\X_{i_1,i_2,i_3}$ is independent, zero-mean, and satisfies $\mathbb{E}\left[ e^{t\X_{i_1,i_2,i_3}} \right] \le e^{k^2 t^2/2}$. Then the spectral norm of $\X$ can be bounded as follows:
    \begin{equation}
       \|\X\| \le \sqrt{8 k^2 \left( \sum_{i=1}^3 d_i \right) \log(6/\log(3/2)) + \log(2/\delta) }
    \end{equation}
    with probability at least $1 - \delta$.
\end{lemma}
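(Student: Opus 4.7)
The plan is a standard $\varepsilon$-net argument in three factors, combined with a subgaussian Hoeffding tail bound, plus a careful optimization of $\varepsilon$ to match the displayed constants.

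First, recall that $\|\X\| = \sup_{(u,v,w)\in \mathbb{S}^{d_1-1}\times \mathbb{S}^{d_2-1}\times \mathbb{S}^{d_3-1}} \langle \X, u\otimes v\otimes w\rangle$. For any fixed unit vectors $u,v,w$, the random variable $Y(u,v,w):=\langle \X, u\otimes v\otimes w\rangle=\sum_{i,j,k}\X_{i,j,k}u_iv_jw_k$ is a weighted sum of independent zero-mean subgaussian variables with parameter $k$. Since $\sum_{i,j,k}(u_iv_jw_k)^2=\|u\|^2\|v\|^2\|w\|^2=1$, a standard Chernoff/Hoeffding bound yields
\begin{equation}
\mathbb{P}\bigl(|Y(u,v,w)|>t\bigr)\le 2\exp\!\left(-\frac{t^2}{2k^2}\right).
\end{equation}

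Second, I would discretize the supremum. For each $i$, let $\mathcal{N}_i$ be a minimal $\varepsilon$-net of $\mathbb{S}^{d_i-1}$, so $|\mathcal{N}_i|\le (1+2/\varepsilon)^{d_i}$ by the usual volumetric bound. A one-line telescoping argument applied to each factor separately gives
\begin{equation}
\|\X\| \;\le\; \frac{1}{1-3\varepsilon}\,\max_{(u,v,w)\in \mathcal{N}_1\times\mathcal{N}_2\times \mathcal{N}_3}\langle \X, u\otimes v\otimes w\rangle,
\end{equation}
valid for any $\varepsilon<1/3$. A union bound over $|\mathcal{N}_1||\mathcal{N}_2||\mathcal{N}_3|\le (1+2/\varepsilon)^{d_1+d_2+d_3}$ points, combined with the subgaussian tail bound above, yields
\begin{equation}
\mathbb{P}\bigl(\|\X\|>s\bigr)\;\le\; 2\exp\!\left((d_1+d_2+d_3)\log(1+2/\varepsilon)-\frac{(1-3\varepsilon)^2 s^2}{2k^2}\right).
\end{equation}

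Third, I would set this failure probability equal to $\delta$ and solve for $s$. Optimizing the trade-off by choosing a fixed $\varepsilon\in(0,1/3)$ (essentially minimizing $\log(1+2/\varepsilon)/(1-3\varepsilon)^2$) is what produces the specific numerical constant $\log(6/\log(3/2))$ appearing in the statement. With that $\varepsilon$ fixed, rearranging gives
\begin{equation}
\|\X\|\;\le\;\sqrt{8k^2(d_1+d_2+d_3)\log\!\bigl(6/\log(3/2)\bigr)+\log(2/\delta)}
\end{equation}
with probability at least $1-\delta$, which matches the claim.

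The routine parts are the subgaussian concentration for fixed $(u,v,w)$ and the covering-number bound. The one step that requires care, and which I expect to be the main technical obstacle, is the epsilon-net inflation inequality for tensors: one must verify that replacing $(u,v,w)$ by nearby net points $(\tilde u,\tilde v,\tilde w)$ loses at most a factor $1/(1-3\varepsilon)$, which relies on the multilinearity of the trilinear form and the bound $|\langle \X,(u-\tilde u)\otimes v\otimes w\rangle|\le \varepsilon\|\X\|$ for $\|v\|=\|w\|=1$. Beyond that, getting the advertised constants out of the union bound is a calibration calculation that exactly determines the choice of $\varepsilon$; this is the step \cite{Tomioka} carries out in their Lemma~1 and Theorem~1, which is cited as the direct source of the result.
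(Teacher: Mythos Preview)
Your proposal is correct and matches the paper's approach exactly: the paper does not give an independent proof but simply invokes Lemma~1 and Theorem~1 of \cite{Tomioka}, which carry out precisely the $\varepsilon$-net plus subgaussian union-bound argument you describe. One minor remark: the specific constant $\log(6/\log(3/2))$ in the statement comes from Tomioka's particular choice of net radius and the form of their inflation inequality, which differs slightly from the $1/(1-3\varepsilon)$ factor you use; this is only the calibration step you already flag, not a gap in the argument.
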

To prove Lemma \ref{lem:SubGaussBound}, we just need to show that the distribution of $G$ is sub-Gaussian:
\begin{align}
    \mathbb{E}\left[ e^{t\X_{i_1,i_2,i_3}} \right] = \rho_s \left( \frac{e^{t^2} + e^{-t^2}}{2} \right) \le \cosh(t^2) \le e^{t^2/2}.
\end{align}

    \section{Proof of Proposition \ref{prog2}}
    \label{sec:proofBurerMonteiro}
    
    Our proof of Proposition \ref{prog2} uses the following proposition:
    \begin{proposition}
        \label{prog1}
        Suppose $\gamma^*, \{ u_r^{* (1)} \}, \cdots, \{ u_r^{*(K)} \}, \S^*$ are optimal for the nonconvex program
        \begin{align}
        \label{noncon3}
        \min_{\gamma, \{ u_r^{(1)} \}, \cdots, \{ u_r^{(K)} \}} &\frac{1}{2} \left\| \sum_{r=1}^R \gamma_r (u_r^{(1)} \otimes \cdots \otimes u_r^{(K)} ) + \S - \Z \right\|_F^2 + \lambdaS \| \S \|_{\textnormal{sum}} \\
        & \quad \quad \quad \quad \quad \quad \quad \quad \quad \quad \quad \quad \quad \quad \quad \quad \quad + \frac{\lambdaX}{K} \|\gamma\|_1 \\
        \textnormal{s.t.} \quad & \|u_r^{(1)}\|, \cdots, \|u_r^{(K)}\| \le 1,
        \end{align}
        Let $\X^* = \sum_{r=1}^R \gamma^*_r (u^*_r \otimes v^*_r \otimes w^*_r)$. Then the point $(\X^*,\S^*)$ is optimal for the problem \eqref{RPCAlag}. Conversely, if $(\X^*,\S^*)$ is the minimizer of \eqref{RPCAlag}, then the terms $\gamma^*, \{ u_r^* \}, \{ v_r^* \}, \{ w_r^* \}$ from a decomposition of $\X^*$ are optimal for \eqref{noncon3}.
    \end{proposition}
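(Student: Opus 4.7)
The plan is to establish a value- and optimum-preserving correspondence between feasible points of \eqref{noncon3} and \eqref{RPCAlag}, using the variational definition of the tensor atomic norm as the $\ell_1$-infimum over unit-norm rank-one decompositions, together with the fact (cited in Section \ref{sec:Prelim1} to \cite{Friedland}) that this infimum is attained. The forward direction sends $(\gamma, \{u_r^{(k)}\}, \S) \mapsto (\X, \S)$ via $\X := \sum_r \gamma_r\, u_r^{(1)} \otimes \cdots \otimes u_r^{(K)}$; the reverse direction lifts $(\X^*, \S^*)$ through any atomic decomposition of $\X^*$.

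For the forward bound, note that for any feasible $(\gamma, \{u_r^{(k)}\}, \S)$ of \eqref{noncon3}, the image $(\X, \S)$ is automatically feasible for \eqref{RPCAlag} and the fidelity term $\tfrac{1}{2}\|\X + \S - \Z\|_F^2$ and sparsity term $\lambdaS\|\S\|_{\textnormal{sum}}$ match identically. For the nuclear term, I would employ a rescaling reduction: at an optimum of \eqref{noncon3} we may assume $\|u_r^{(k)}\| = 1$ for every $(r,k)$ with $\gamma_r \ne 0$, because if $\|u_r^{(k)}\| < 1$ we can replace $u_r^{(k)}$ with $u_r^{(k)}/\|u_r^{(k)}\|$ and absorb the product of norms into $\gamma_r$, preserving $\X$ while weakly decreasing $|\gamma_r|$ (and hence the objective). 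Once all factor vectors have unit norm, the decomposition is a candidate in the atomic-norm infimum, so $\|\gamma\|_1 \ge \|\X\|_*$. Consequently, the \eqref{RPCAlag} value at $(\X, \S)$ does not exceed the \eqref{noncon3} value at the chosen point, giving $\inf\eqref{RPCAlag} \le \inf\eqref{noncon3}$.

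For the reverse direction, given an optimizer $(\X^*, \S^*)$ of \eqref{RPCAlag}, I would invoke attainment of the atomic decomposition to produce unit-norm vectors $\{u_r^{*(k)}\}$ and weights $\gamma^*$ with $\X^* = \sum_r \gamma_r^*\, u_r^{*(1)} \otimes \cdots \otimes u_r^{*(K)}$ and $\|\gamma^*\|_1 = \|\X^*\|_*$. The tuple $(\gamma^*, \{u_r^{*(k)}\}, \S^*)$ is then feasible for \eqref{noncon3}, and all three terms of its objective match the corresponding terms of \eqref{RPCAlag} at $(\X^*, \S^*)$, yielding $\inf\eqref{noncon3} \le \inf\eqref{RPCAlag}$. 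Combining the two inequalities forces equality of optimal values, and the equality cases in the two maps show that every minimizer of one problem induces a minimizer of the other via the stated formula.

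The main obstacle is the rescaling reduction: one must verify that simultaneous renormalization across all $K$ modes can be carried out without increasing the objective, which rests on the multilinearity of the tensor product and the homogeneity of $\prod_k \|u_r^{(k)}\|$ in the $\gamma_r$--factor tradeoff. A secondary bookkeeping issue is to make the constant $\tfrac{1}{K}$ in the regularizer of \eqref{noncon3} consistent with $\lambdaX\|\X\|_*$ in \eqref{RPCAlag}; this amounts to an identification of the regularization parameters between the two formulations, and must be tracked carefully so that the matching of objective values through the two maps is exact.
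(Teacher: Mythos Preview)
Your proposal is correct and follows essentially the same route as the paper: the paper's one-line appeal to ``coerciveness of norms'' to justify relaxing the unit-norm equalities to inequalities is exactly your rescaling reduction, and your forward/backward maps via the atomic-norm definition are the substitution the paper performs in a single step. Your flag on the $\tfrac{1}{K}$ constant is well taken---the paper's own proof writes the regularizer as $\lambdaX\|\gamma\|_1$ (not $\tfrac{\lambdaX}{K}\|\gamma\|_1$), which is what the atomic-norm identity actually yields, so the $\tfrac{1}{K}$ in the displayed statement of \eqref{noncon3} appears to be a typo carried over from \eqref{noncon1}; track it as $\lambdaX\|\gamma\|_1$ and the bookkeeping closes.
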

    \begin{proof}
        Using the definition of the atomic norm, we can rewrite \eqref{RPCA} as
        \begin{align}
        \min_{\X} \quad &\frac{1}{2} \left\| \X + \S - \Z \right\|_F^2 + \lambdaS \| \S \|_{\textnormal{sum}} + \min_{\gamma, \{ u_r^{(1)} \}, \cdots, \{ u_r^{(K)} \}} \lambdaX \|\gamma\|_1, \\
        \textnormal{s.t.} \quad & \X = \sum_{r=1}^R \gamma_r (u_r^{(1)} \otimes \cdots \otimes u_r^{(K)} ), \quad \|u_r^{(1)}\| = \cdots = \|u_r^{(K)}\| = 1.
        \end{align}
        Due to the coerciveness of norms, replacing the norm constraints with inequalities does not change the global optima. This adjustment yields \eqref{noncon3}.
    \end{proof}
    \noindent If $R$ is chosen large enough, then the program \eqref{noncon3} is an equivalent reformulation of \eqref{RPCA}. However, instead of replacing the atomic norm with a smooth, nonconvex regularizer as we would in the matrix case, we have introduced a non-smooth term and multiple constraints. We would like a nonconvex representation of the atomic norm that more closely generalizes \eqref{nonconnuc}. Proposition \ref{prog2} provides this. The proof of Proposition \ref{prog2} is as follows:

    \begin{proof}
        We use an argument similar to the proof in Appendix II of \cite{Bazerque}. We can rewrite \eqref{noncon1} as
        \begin{align}
        \label{noncon2}
        \min_{\gamma, \{ a_r^{(1)} \}, \cdots , \{a_r^{(K)}\}, \{u_r^{(1)} \}, \cdots , \{u_r^{(K)}\} } \ &\frac{1}{2} \left\| \X + \S - \Z \right\|_F^2 + \lambdaS \| \S \|_{\textnormal{sum}} \\
        & \quad \quad \quad \quad \quad \quad \quad \ + \frac{\lambdaX}{K} \sum_{r=1}^R \sum_{k = 1}^K \|a_r^{(k)}\|^K \\
        \textnormal{s.t.} \quad & \X = \sum_{r=1}^R \gamma_r (u_r^{(1)} \otimes \cdots \otimes u_r^{(K)}), \notag\\
        & \gamma_r = \| a_r^{(1)} \| \cdots \|a_r^{(K)}\|.\notag
        \end{align}
        Minimizing over $\gamma, \{a_r\}, \{b_r\},$ and $\{c_r\}$ first, we must solve
        \begin{align*}
        \min_{\gamma} \quad &\sum_{r=1}^R \sum_{k=1}^K \|a_r^{(k)}\|^K \\
        \textnormal{s.t.} \quad &\gamma_r = \| a_r^{(1)} \| \cdots \|a_r^{(K)}\|.
        \end{align*}
        The AM-GM inequality tells us
        \begin{equation*}
        ( \| a_r^{(1)} \|^K \cdots \|a_r^{(K)}\|^K )^{\frac{1}{K}} \le \tfrac{1}{K} (\| a_r^{(1)} \|^K + \cdots + \|a_r^{(K)}\|^K),
        \end{equation*}
        with equality when $\|a_r^{(1)}\| = \cdots = \|a_r^{(K)}\| = \gamma^{\frac{1}{K}}$, so the optimal $\gamma$ satisfies
        \begin{equation*}
        \|\gamma\|_1 = \frac{1}{K} \sum_{r=1}^R \sum_{k=1}^K \| a_r^{(k)} \|^K.
        \end{equation*}
        Using these optimal values in \eqref{noncon2}, we see that \eqref{noncon2} is equivalent to
        \begin{align*}
        \min_{\gamma, \{ u_r^{(1)} \}, \cdots, \{ u_r^{(K)} \}} \quad &\frac{1}{2} \left\| \sum_{r=1}^R \gamma_r (u_r^{(1)} \otimes \cdots \otimes u_r^{(K)} ) + \S - \Z \right\|_F^2 + \lambdaS \| \S \|_{\textnormal{sum}} \\
        & \quad \quad \quad \quad \quad \quad \quad \quad \quad \quad \quad \quad \quad \quad \quad \quad \quad \ + \frac{\lambdaX}{K} \|\gamma\|_1, \\
        \textnormal{s.t.} \quad & \|u_r^{(1)}\|, \cdots, \|u_r^{(K)}\| \le 1,
        \end{align*}
        which is equivalent to \eqref{RPCA} by Proposition \ref{prog1}.
    \end{proof}

\end{appendices}

\end{document}